%% file: main.tex
\documentclass[a4paper, 10pt]{amsproc}
\usepackage{defs}
\usepackage{orcidlink}
\usepackage{calc}
\usepackage[dvipsnames]{xcolor}
\usepackage{tabularray}
\usepackage{empheq} % for boxing equations
\newtcolorbox{empheqboxed}{colback=white, 
    colframe=black,
    boxrule=0.25mm,
    width=\columnwidth,
    sharpish corners,
    top=-2mm, % default value 2mm
    left=2pt,
    bottom=5pt
}
\definecolor{metablue}{HTML}{0064E0}
\definecolor{metafg}{HTML}{1C2B33}
\definecolor{metabg}{HTML}{F1F4F7}
\definecolor{metabgdeep}{HTML}{D9EFFF}
\definecolor{metagreen}{HTML}{EAFFE8}
\definecolor{metagreen}{HTML}{FCFFEE}
\definecolor{metared}{HTML}{FFEAE8}
\newtheorem{theorem}{Theorem}
\newtheorem{assumption}{Assumption}
\newtheorem{remark}{Remark}

\newtheorem{problem}{Problem}
\newtheorem{proposition}{Proposition}
\newtheorem{lemma}{Lemma}
\newtheorem{corollary}{Corollary}

\renewcommand{\N}{\ensuremath{\mathbb{N}}}
\newcommand{\T}{\mathsf{T}}
\newcommand{\indicator}{\mathds{1}}

\newcommand{\identityMatrix}{\mathrm{I}}
\newcommand{\statefiltration}{\mathcal{F}}
\newcommand{\controlfiltration}{\mathcal{G}}
\newcommand{\constridx}{s}

\renewcommand{\S}{Section~}

\DeclareSymbolFont{extraup}{U}{zavm}{m}{n}
\DeclareMathSymbol{\varheart}{\mathalpha}{extraup}{86}
\DeclareMathSymbol{\vardiamond}{\mathalpha}{extraup}{87}
\DeclareMathSymbol{\varclub}{\mathalpha}{extraup}{84}
\DeclareMathSymbol{\vardspade}{\mathalpha}{extraup}{85}

\usepackage[framemethod=tikz,xcolor=true]{mdframed}

\newmdenv[backgroundcolor=metabgdeep, roundcorner=10pt, skipabove=4pt, linewidth=0pt, innertopmargin=4pt]{myframe}

\newmdenv[backgroundcolor=metabg, roundcorner=10pt, skipabove=4pt, linewidth=0pt, innertopmargin=4pt]{myOCP}

\newmdenv[backgroundcolor=metared, roundcorner=10pt, skipabove=7pt, linewidth=0pt, innertopmargin=7pt]{myalgo}
\newmdenv[%
    leftmargin=0.5cm,
    backgroundcolor=yellow!10,%
    roundcorner=5pt,%
    tikzsetting={draw=red, line width=2.0pt}%
    ]{SpecialText}%

\title[MJLS-CovSteer]{Covariance Steering of Discrete-Time Markov Jump Linear Systems with Multiplicative Noise}

\author[F. Wang]{Fangji Wang\,\orcidlink{0009-0000-8047-9901}}
\author[S. Ganguly]{Siddhartha Ganguly\,\orcidlink{0000-0003-2046-2061}}
\author[P. Tsiotras]{Panagiotis Tsiotras\,\orcidlink{0000-0001-7563-4129}}

\thanks{%
	F. Wang, S. Ganguly and P. Tsiotras are with \faGroup\ Daniel Guggenheim School of Aerospace, \faUniversity\ Georgia Institute of Technology, \faMapMarker\  Atlanta, USA.}

\thanks{%
	Contact Information: (FW) \faEnvelope\ \texttt{
fwang406@gatech.edu}, (SG) \faHome\ \url{https://sites.google.com/view/siddhartha-ganguly}, \faEnvelope\ \texttt{sganguly41@gatech.edu}, (PT) \faHome\ \url{https://dcsl.gatech.edu/tsiotras.html}, \faEnvelope\ \texttt{tsiotras@gatech.edu}.
}

\begin{document}

\maketitle

\begin{abstract}
We study a finite-horizon covariance steering problem for discrete-time Markov jump linear systems (MJLS) with both state- and control-dependent multiplicative noise. The objective is to minimize a quadratic running cost while steering the system from given mode-conditioned initial means and covariances to a prescribed terminal mean and covariance. We first show that, without loss of generality, feasible controls may be represented by mode-dependent linear feedback together with feedforward and independent random components, and we highlight that, in contrast to the case without multiplicative noise, a purely affine state-feedback law does not in general suffice. To this end, we introduce a lifted-state formulation that embeds the mean and covariance information into a unified second-moment description, and we prove that the resulting lifted problem is equivalent to the original covariance steering problem formulation. This leads to a lossless relaxation in moment variables and an SDP reformulation for the unconstrained case. We further study chance-constrained covariance steering with ball and half-space constraints on the state and control, derive tractable sufficient convex surrogates, and establish an iterative reference-update scheme to reduce conservatism. Numerical experiments on a finance application illustrate our results.
\end{abstract}

\input{sections/1-Introduction}

\input{sections/2-Problem_Setup}

\input{sections/3-Main_Result}

\input{sections/4-Numerics}
\input{sections/Conclusion}

 \appendix
\input{sections/appendix}

\bibliographystyle{amsalpha}
\bibliography{refs}

\end{document}

%% file: sections/1-Introduction.tex
\section{Introduction}\label{sec:introduction}

Markov jump linear systems (henceforth referred to as MJLS) provide a natural modeling framework for controlled dynamical processes that evolve according to a finite collection of linear modes, which may undergo abrupt, random regime changes governed by a {M}arkov chain; we refer the reader to \cite{ref:Discrete:MJLS:book} for a comprehensive introduction to MJLS and its controlled and noisy variants. 
Because many engineering and economic systems operate under changing environments, failures, mode switches, or regime-dependent parameters, MJLSs have become a standard tool and dynamical setting in stochastic control and systems theory \cite{ref:indefinite:Quad:Opt:Con:MJLS:Aut07}. 
They have been used to model, among other things:
\begin{itemize}[label= \(\circ\),leftmargin=*]
    \item aerospace \cite{ref:MJLS:applications:aerospace} and robotics \cite{ref:MJLS:applications:robotics},
    \item power systems \cite{ref:MJLS:applications:power:systems} and communication networks \cite{ref:MJLS:applications:comm:networks},
    \item fault-tolerant control architectures \cite{ref:MJLS:applications:fault:detect},
    \item macroeconomic \cite{ref:MJLS:applications:economics} and financial systems \cite{ref:MJLS:finance:IJRNC:1},
\end{itemize}
as well as a variety of other engineering processes in which abrupt structural change cannot be neglected, and a control (perhaps under constraints) synthesis is necessary.

In many applications, however, random switching is only one source of uncertainty. 
A second important source of uncertainty is stochastic noise, whose effect scales with the current state or the applied control, which is more appropriately represented by \emph{multiplicative noise} rather than purely additive noise. 
The multiplicative noise regime of MJLSs is relevant, for example, when uncertainty intensifies with operating level, exposure, or actuation magnitude, as in regime-dependent financial systems, uncertain actuators, and stochastic systems with parameter fluctuations \cite{ref:LQ:OptCon:MJLS:Mult:noise:SCL24,ref:Constrained:con:MJLS:Mult:noise:SCL24,ref:DRO:finance}. 
Consequently, MJLSs with multiplicative noise have attracted sustained attention in the literature, where they have been studied from the viewpoints of stochastic LQ control \cite{ref:LQ:OptCon:MJLS:Mult:noise:SCL24}, mean-variance control \cite{ref:MJLS:finance:IJC:mean-variance:2}, \(\mathcal{H}_2\)--\(\mathcal{H}_{\infty}\) analysis and control \cite{ref:Fas:Switching:H2:MJLS:SICON}, constrained model predictive control \cite{ref:MJLS:MPC:SCL:14}, partial observation, and more recently reinforcement learning and asynchronous/hidden-mode designs \cite{ref:MJLS:Filter:II}, \cite{ref:LQ:OptCon:MJLS:Mult:noise:SCL24}.

% I will move this to some appropriate place later

% The preceding works are important literature in MJLS and control since they establish tractable algorithmic architectures and clarify how Markov jumps and multiplicative uncertainties interact through Riccati equations, second moments, LMIs, and receding-horizon optimization. At the same time, they typically focus on regulation, quadratic performance, or mean-variance criteria, whereas the present paper studies a quadratic-cost control problem with prescribed terminal mean and covariance constraints, and later incorporates chance constraints on state and control.

Driven by the necessity to quantify and regulate uncertainty in controlled processes, recent research has focused increasingly on the distributional evolution of stochastic system trajectories. A foundational subset of this field is \emph{covariance control}, which originated in the mid-1980s. A body of early works in this direction primarily investigated how state feedback could be employed to assign specific state covariances over infinite time horizons; see \cite{ref:CovSteer:I,ref:CovSteer:II,ref:CovSteer:III,ref:CovSteer:IV} for more details. 
Lately, there has been a substantial shift toward \emph{finite-horizon} covariance steering for linear stochastic systems. 
These approaches typically aim to hit specific terminal covariance targets \cite{liu2024optimal}, often while operating under chance constraints \cite{ref:CovSteer:VII:Okamoto}, \cite{ref:KO:MG:PT:Opt:CovCont:LCSS} to keep states and inputs within safe limits. 
In discrete-time regimes, much of the existing work has proceeded through optimization in an augmented state space, together with a convexification or relaxation of the resulting constraints \cite{ref:KJ:PT:Com:Efi:CDC}. Other directions include optimal transport-driven Wasserstein \cite{ref:CovSteer:VIII:Wass:Term,ref:parmar2026robust} and Gromov-Wasserstein terminal penalties \cite{ref:CovSteer:X:DD:CDC:NH:SG:KK,ref:NH:SG:KM:KK:Formation:shape:GW}, Hilbert-Schmidt penalty \cite{ref:CovSteer:IX:AH:TS:Fixed}, receding-horizon covariance control, optimization-based steering of the full state distribution, and a substantial finite-horizon continuous-time literature \cite{ref:YC:TTG:MV:Part-I,ef:YC:TTG:MV:Part-II,ef:YC:TTG:MV:Part-III,liu2024optimal,ref:Liu:PT:Mult:Noise:CT:CovSteer}. Covariance steering architectures have also been adapted in data-driven and distributionally robust settings as well; see \cite{ref:DDCovSteer:JP:PT,ref:DistRob:CovSteer}.

\subsection{Natural questions and motivation}\label{subsec:intro:question:motiv}

In the absence of multiplicative noise, a recent work \cite{shrivastava2025chance} restricted the control policy to affine state-feedback laws and solved the resulting nonconvex problem in a conservative manner. 
However, the sufficiency of this parametrization has not been established, and, to the best of our knowledge, no approach solves the problem to optimality, even without chance constraints.
Motivated by these limitations, and by the structural differences introduced by state- and control-dependent stochastic perturbations, in this paper, we consider the more general setting of MJLS with multiplicative noise. Given the preceding premise, several natural questions arise:

\begin{myOCP}
\vspace{1mm}
\begin{enumerate}[label={\textup{(\(\mathsf{Q}\)-\alph*)}}, leftmargin=*, widest=b, align=left]
    \item \label{ques:1} Does it make sense to formulate covariance steering problems for MJLSs with multiplicative noise? 
    \item \label{ques:2} Are such formulations relevant for practical applications?
    \item \label{ques:3} If so, how should covariance steering be posed for systems with regime jumps and state- and control-dependent stochastic perturbations?
    \item \label{ques:4} Is the family of affine state-feedback controllers sufficient to characterize feasible and/or optimal solutions in this setting?
\end{enumerate}
\end{myOCP}

We answer~\ref{ques:1} and \ref{ques:2} positively: covariance steering for MJLSs with multiplicative noise is both practically relevant and mathematically tractable. Question~\ref{ques:3} is answered by developing a concrete problem formulation and a solution framework based on a lifted second-moment representation along with an SDP-based reformulation. 
In contrast, Question~\ref{ques:4} is answered, in general, \emph{negatively}: unlike covariance steering problems without multiplicative noise, the family of affine state-feedback controllers is not sufficient, in general, to characterize feasible or optimal solutions. 
This structural distinction is one of the main reasons a more general control parameterization and lifted formulation are required. 
This brings us to the main contributions of this article.

% The present work addresses these questions in the \embf{affirmative}. The contributions of our article answer the questions \ref{ques:1}--\ref{ques:4} in three ways: \textbf{(a)} they show that covariance steering for MJLSs with multiplicative noise is a practical and solvable problem class, since the combination of regime jumps and state/control-dependent stochastic perturbations leads naturally to terminal moment steering problems\fangji{What is ``a meaningful problem class''}\sid{changed it to practical and solvable}; \textbf{(b)} they show that the setting is practically relevant by covering formulations motivated by applications such as dynamic hedging and by incorporating chance constraints on the state and control; \textbf{(c)} they provide a concrete formulation and solution framework for this class of problems through a lifted second-moment representation and SDP-based reformulations. The chief contributions of this article are as follows.
%%%%%%%%%%%% our contributions %%%%%%%%%%%%

\subsection{Contributions}\label{subsec:intro:contrib}
\begin{enumerate}[leftmargin=*, widest=b, align=left]

\item \textbf{Setting:} For a broad class of discrete-time Markov jump linear systems with \emph{both} state- and control-dependent multiplicative noise, we formulate a finite-horizon covariance steering problem, with mode-conditioned initial means and covariances, prescribed terminal mean and covariance, and a quadratic running cost. This is a novel class of problems not yet considered in the literature. Our formulation also admits a terminal covariance upper-bound variant, which is relevant in applications such as dynamic hedging and risk management.

\item \textbf{Control parametrization:} We show that, for the considered class of covariance steering problems, it is without loss of generality to consider a control law
% \fangji{I think it may be better to say ``consider a control law ...''. I'm afraid the "admissible" here may mislead readers that: for the case without multiplicative noise, any admissible control can be realized by an affine state-feedback law. This is not true, since for a general admissible control, a random term is also needed.}
consisting of mode-dependent linear feedback, a feedforward term, and an additional independent random component. 
This result is crucial, since unlike the case without multiplicative noise, a purely affine state-feedback law does not, in general, suffice to realize the required terminal moments.
% \sid{Fangji: can you add a reference?}. \fangji{\cite{liu2024optimal} is for linear system. I think~\cite{shrivastava2025chance} is the only paper for MJLS covariance steering but it doesn't prove this.} 
Thus, the multiplicative-noise setting is structurally different from the standard covariance-steering formulation and requires a more general representation of the feedback law.

\item \textbf{A new lifted-state reformulation:} To address the previous difficulty, we introduce a lifted-state formulation and 
% in which the mode-dependent means and covariances are embedded into a single \fangji{``mode-dependent''?} second-moment variable. We then
show that the resulting lifted moment-steering problem is \emph{exactly} equivalent to the original covariance-steering problem. This lifted formulation preserves the coupling between the mean and covariance dynamics and provides the main structural device used in the remainder of the paper. 
Based on the proposed lifted formulation, we derive a lossless relaxation in terms of moment variables and obtain an \emph{exact semidefinite programming reformulation (SDP)} for the unconstrained problem. The same framework also accommodates the terminal covariance inequality case.

\item \textbf{Chance-constrained extension and algorithm:} 
We tackle chance-constrained covariance steering for the same MJLS setting. For ball and half-space chance constraints on the state and control, we derive tractable sufficient convex surrogates and show that the resulting optimization problem can be written as an SDP whose solution is feasible for the original chance-constrained problem. To reduce the conservatism of these surrogates, we also propose an iterative reference-update scheme with tolerance relaxation.

\item \textbf{An application in finance:} Finally, we illustrate the framework on a dynamic hedging problem with regime switching, volatility-scaled state uncertainty, and control-dependent multiplicative noise, showing the numerical fidelity of our established framework.
\end{enumerate}

% \sid{Fangji: please read the contributions carefully and check.}
% \fangji{Looks pretty good in general}

\subsection{Perspectives and relevance}\label{subsec:intro:prespective}

The class of problems studied here may be viewed as a finite-horizon stochastic control problem with terminal moment specifications for systems subject to both regime switching and multiplicative uncertainty. This viewpoint is natural when one wishes to combine quadratic control performance with prescribed terminal mean and covariance requirements. A particularly relevant application, already noted in the stated contributions, is dynamic hedging, where the state may represent a tracking error, the Markov chain captures changes in market regimes, and the multiplicative terms model volatility or friction effects that scale with the portfolio state or executed trades. In broad strokes, since MJLS models (with or without multiplicative noise) arise in areas such as communication networks, robotics, fault-tolerant systems, flight systems, and finance, our framework is useful in settings where regime-dependent dynamics and state/control-dependent stochastic effects must be handled together with terminal moment or chance constraints. 
% For this reason, we believe that the results of this paper are of practical relevance. A natural next step is to apply our algorithm to such application domains and to further refine the algorithmic architecture to suit their specific requirements.

% \sid{I think we should discuss Srivastava-Oguri somewhere in the intro (max 2 lines). Maybe in the contributions? }

% \fangji{I'm thinking of putting it at the top of I.A?
% Maybe we can put it in this way (?):
% In the absence of multiplicative noise, recent work such as~\cite{shrivastava2025chance} restricts the control policy to affine state-feedback laws and solves the resulting nonconvex problem in a conservative manner.
% However, the sufficiency of this parametrization has not been established, and, to the best of our knowledge, there is no approach that solves even the problem without chance constraints to optimality under this restriction.
% Motivated by these limitations, and by the structural differences introduced by state- and control-dependent stochastic perturbations, we consider in this paper the more general setting of MJLS with multiplicative noise. This raises several fundamental questions:
% (Q-a)-(Q-c)
% (Q-d) Is the family of affine state-feedback controllers sufficient to characterize admissible and optimal solutions in this setting?
% }

%% file: sections/2-Problem_Setup.tex
\section{Problem formulation}\label{sec:prob:form}
Before formulating the primary problem, we introduce the notation that will be used throughout this article. 

% ===========================
% ====== notations ==========
% ===========================

\subsection{Notation}\label{subsec:notation}
We employ standard notation in this article. 
Let \(\N \Let \aset{1,2,\ldots}\) 
be the set of positive integers. 
Given \(d \in \N\), we will denote the standard Euclidean vector space by \(\R[d]\), which is assumed to be equipped with the standard norm \(\R[d] \ni x\mapsto \norm{x}\). Given \(T \in \N\) and a sequence $\{r_k\}_{k=0}^T$, we simply employ $r$ to denote the entire sequence, i.e., $r \Let \aset[]{r_k \suchthat k=0,\ldots,T}$.
Given a finite index set $\indexset$, and a sequence
$\{r_k(j)\}_{k=0,\ldots,T,\; j\in \indexset}$, we use
\(r_k(\indexset) \Let \{r_k(j)\}_{j\in\indexset}\) to denote the collection of quantities at time \(k\), and \(r(\indexset) \Let \bigcup_{k=0}^{T} r_k(\indexset)\) to denote the collection of these quantities over the horizon \(k=0,\ldots,T\). 
Given \(p \in \N\), we denote by \(\mathrm{I}_{p}\)  the \(p\times p\) identity matrix. 
For \(m,n \in\N\), for a given matrix $A\in \R[m\times n]$ and $p,q\in\N$ such that $p\le m$ and $q\le n$, we employ $(A)_{1:p,1:q}$ to represent the top left $p\times q$ block of $A$. 
For $q \in \N$, we denote by $\mathbb{S}^q$ and $\mathbb{S}_+^q$ the sets of $q \times q$ real symmetric and real symmetric positive semidefinite matrices, respectively.

% =============================
% ====== the problem ==========
% =============================

\subsection{The central problem and initial results}
% \sid{I think this looks better now.}

Fix the horizon $T \in \N$ and
consider the filtered probability space $\bigl(\Omega,\statefiltration,\{\statefiltration_k\}_{k=0}^T,\mathbb{P}\bigr)$, where $\statefiltration_k \subseteq \statefiltration$ for each $k=0,\ldots,T$. Fix $N_{\indexset}, n_x, n_u, n_w, m_x, m_u \in \mathbb{N}$ and define the finite index set $\indexset \coloneqq \{1,\ldots,N_{\indexset}\}$. For each $k=0,\ldots,T-1$, consider the discrete-time Markov jump linear system with additive and multiplicative noise
\begin{align}
    &x_{k+1} = A_k(q_k)x_k + B_k(q_k)u_k 
    + \sum_{r=1}^{m_x} A_k^{(r)}(q_k) x_k \, \xi_k^{(r)} \nonumber\\
    &\quad + \sum_{s=1}^{m_u} B_k^{(s)}(q_k) u_k \, \eta_k^{(s)} 
    + D_k(q_k) w_k.
    \label{eq:plain_mjls_system}
\end{align}
along with the data:
\begin{enumerate}[label={\textup{(\eqref{eq:plain_mjls_system}-\alph*)}}, leftmargin=*, widest=b, align=left]

\item \label{prob:data:1}
For $k=0,\ldots,T$, the state $x_k \in \mathbb{R}^{n_x}$ is $\statefiltration_k$-adapted~\cite{ref:Shreve:Karatzas}, the control input is $u_k \in \mathbb{R}^{n_u}$, and the mode $q_k \in \indexset$ evolves as a Markov chain with transition probabilities
$\mathbb{P}(q_{k+1}=j \mid q_k=i) \eqqcolon p_k^{ij}$.
For $k=1,\ldots,T$, $x_k$ and $q_k$ are independent conditional on $q_{k-1}$.

\item \label{prob:data:2}
For $k=0,\ldots,T-1$ and $j\in\indexset$, the disturbance $w_k \in \mathbb{R}^{n_w}$ is independent of $\statefiltration_k$, conditional on $q_k=j$, and satisfies
\[
\mathbb{E}[w_k \mid q_k=j] = 0, 
\quad 
\mathbb{E}[w_k w_k^\top \mid q_k=j] = \identityMatrix_{n_w}.
\]

\item \label{prob:data:3} For $k=0,\ldots,T-1$ and $j\in\indexset$, the multiplicative noises $\{\xi_k^{(r)}\}_{r=1}^{m_x}$ and $\{\eta_k^{(s)}\}_{s=1}^{m_u}$ are mutually independent, conditional on $q_k=j$, and satisfy
\[
\mathbb{E}\bigl[\xi_k^{(r)} \mid q_k=j\bigr] = 0,
\quad
\mathbb{E}\bigl[(\xi_k^{(r)})^2 \mid q_k=j\bigr] = 1,
\]
\[
\mathbb{E}\bigl[\eta_k^{(s)} \mid q_k=j\bigr] = 0, \quad
\mathbb{E}\bigl[(\eta_k^{(s)})^2 \mid q_k=j\bigr] = 1,
\]
for all $r=1,\ldots,m_x$ and $s=1,\ldots,m_u$. Moreover, the $\sigma$-algebra generated by the union $\{\xi_k^{(r)}\}_{r=1}^{m_x}\cup\{\eta_k^{(s)}\}_{s=1}^{m_u}$
is independent of $\statefiltration_k$ and $w_k$, conditional on $q_k=j$. 
These random variables model state- and control-dependent uncertainties, respectively.
\end{enumerate}

\begin{remark}
The class of systems in \eqref{eq:plain_mjls_system} is substantially general and strictly contains several covariance steering models studied in the literature. Indeed, if \(\indexset\) is a singleton, there is no regime switching, and \eqref{eq:plain_mjls_system} reduces to a standard discrete-time linear system with additive and multiplicative noise \cite{liu2024optimal,ref:Liu:PT:Mult:Noise:CT:CovSteer}. 
If the multiplicative-noise coefficient matrices satisfy \(A_k^{(r)}(j)=0\), and \(B_k^{(s)}(j)=0\) for all \(k,j,r\), and \(s\), then \eqref{eq:plain_mjls_system} reduces to a discrete-time MJLS without multiplicative noise \cite{shrivastava2025chance}. 
The relevance of this generalization has already been discussed in \S\ref{sec:introduction}.
\end{remark}

For all $k\in\{0,\ldots,T\}$ and $j\in\indexset$, we define the \emph{prior probability} of $j$ at $k$ as $\rho_k(j)\Let\mathbb{P}(q_k=j)$.  
The \emph{conditional mean} of $x_k$ is defined as $\mu_k(j)\Let \mathbb{E}[x_k\mid q_k=j]$, and the \emph{conditional covariance} is defined as $\Sigma_k(j) \Let \mathbb{E}\big[(x_k-\mu_k(j))(x_k-\mu_k(j))^\T\mid q_k=j\big]$. To ensure the preceding quantities are well-defined, we need following assumption.

\begin{assumption}\label{assumption:rho_positive}
For all $k=0,\ldots,T$ and $j\in\indexset$, we assume that $\rho_k(j)>0$.
\end{assumption}

\begin{remark}[On Assumption \ref{assumption:rho_positive}]
We employ Assumption \ref{assumption:rho_positive} to avoid degeneracies in the mode-conditioned quantities \(\mu_k(j)\), \(\Sigma_k(j)\), and in the backward transition probabilities \(s_k^{ij}\), which are employed in the sequel for our analysis. 
Such positivity assumptions are standard when one works with active/reachable mode sets; see, e.g., \cite{ref:Assump:NAR:CEF:SICON}, where the relevant set of Markov states is defined from those with positive initial distribution together with those reachable from them. 
Also see \cite{ref:Assump:lee2006uniform,ref:Assump:wang2013distributed} for similar mode ergodicity assumptions. 
Assumption~\ref{assumption:rho_positive}
also makes sense in practice: for a regime-switching financial model with modes representing normal, volatile, and distressed market conditions, it is natural to assign positive initial probability to each regime and to allow transitions among them with positive probability, so that every regime remains possible at each time step \cite{ref:Assump:Finance}. 
\end{remark}

The unconditional \emph{mean} and \emph{covariance} of $x_k$ are defined as
\begin{align}\label{eq:mean_covariance_relationship}
    &\mu_k:=\mathbb{E}[x_k]=\sum_{j\in\indexset} \rho_k(j)\mu_k(j),
    \\
    &\Sigma_k:=\mathbb{E}\bigl[(x_k-\mu_k)(x_k-\mu_k)^\T\bigr]=\sum_{j\in\indexset} \rho_k(j)\big(\Sigma_k(j)
     +(\mu_k(j)-\mu_k)(\mu_k(j)-\mu_k)^\T\big).
\end{align}
We assume that, at time $k$, the controller has access to the information in $\statefiltration_k$ and to the current mode $q_k$. To this end, define the filtration
\[
\controlfiltration_k \Let \statefiltration_k \vee \sigma(q_k), \quad \text{for }k=0,\ldots,T,
\]
where $\cdot \vee\cdot$ denotes the smallest $\sigma$-algebra that contains the union of the two $\sigma$-algebras, and \(\sigma(\cdot)\) denotes the $\sigma$-algebra generated by random variables or random vectors. A control sequence \(u \Let \aset[]{u_k}_{k=0}^{T-1}\) is said to be \emph{admissible} if each \(u_k\) is \(\controlfiltration_k\)-measurable and square-integrable, i.e., \(\mathbb{E}\bigl[\norm{u_k}^2\bigr]< +\infty\). 
Thus, the set of admissible controls is given by
\begin{align}
\mathcal{U} \Let \left\{u \Let \aset[]{u_k}_{k=0}^{T-1}\;\middle\vert\;  
\begin{array}{@{}l@{}}
u_k \text{ is } \controlfiltration_k\text{-measurable }\mathbb{R}^{n_u}\text{-valued random vector,}\\ 
\text{and }\mathbb{E}[\|u_k\|^2]<+\infty \text{ for each } k.
\end{array}
\right\}.\nn
\end{align}
With the preceding ingredients, we are now in a position to state the central problem studied in this article.

%%%%%%%%% original problem %%%%%%%%%%
\begin{problem}[Covariance steering for MJLS]\label{problem:original}
Fix a horizon $T \in \N$. For all $j\in\indexset$ and $k=0,\ldots,T-1$, let $Q_k(j)\succeq 0$ and $R_k(j)\succ 0$ be state and control weighting matrices, respectively; $\mu_\mathrm{in}(j)$, $\Sigma_\mathrm{in}(j)$, $\mu_\mathrm{out}$, $\Sigma_\mathrm{out}$ are given boundary conditions. Given the problem data \ref{prob:data:1}--\ref{prob:data:3} and the preceding ingredients, over the sequence of admissible control actions \(u \Let \aset[]{u_k}_{k=0}^{T-1} \in \mathcal{U}\), we consider the generalized covariance steering problem:
\begin{empheqboxed}
\begin{align}
    \min_{u \in \admcon} \quad & J \Let \sum_{k=0}^{T-1}\mathbb{E}\big[x_k^\T Q_k(q_k)x_k+u_k^\T R_k(q_k)u_k\big],
    \label{eq:cost}
    \\
    \sbjto\quad &\textup{dynamics }\ref{eq:plain_mjls_system},\label{eq:dynamics_equation_in_original_problem}
    \\
    & \mu_0(j)=\mu_\mathrm{in}(j),\; \Sigma_0(j)=\Sigma_\mathrm{in}(j) \textup{ for }j\in \indexset, \label{eq:initial_condition}
    \\
    &\mu_T=\mu_\mathrm{out},\; \Sigma_T=\Sigma_\mathrm{out}.
    \label{eq:terminal_condition}
\end{align}
\end{empheqboxed}
\end{problem}

\begin{remark}
    While Problem~\ref{problem:original} specifies an exact terminal covariance equality constraint $\Sigma_T = \Sigma_\mathrm{out}$, the proposed framework can be seamlessly adapted to handle a terminal covariance upper bound $\Sigma_T \preceq \Sigma_\mathrm{out}$. This inequality formulation is particularly relevant in dynamic hedging and risk management, where the goal is to ensure that the terminal tracking error variance remains below a prescribed risk tolerance. 
% \red{
    In addition, the proposed framework naturally accommodates mode-conditioned terminal specifications. In particular, one may impose terminal constraints of the form $\mu_T(j) = \mu_\mathrm{out}(j)$ and $\Sigma_T(j) = \Sigma_\mathrm{out}(j)$ for each mode $j \in \mathcal{S}$, or corresponding upper-bound variants $\Sigma_T(j) \preceq \Sigma_\mathrm{out}(j)$. All subsequent theoretical guarantees and linear optimal control structures remain valid under these generalizations, requiring only minor modifications to the terminal conditions in the final semidefinite programming (SDP) formulations. To avoid notational complexity and to keep the main message clear, we restrict our attention to the simpler case. 
\end{remark}

We say that the control $u$ is \emph{feasible} for Problem~\ref{problem:original} if it is admissible and the constraints~\ref{eq:dynamics_equation_in_original_problem}--\ref{eq:terminal_condition} are satisfied.
Moreover, we say that the control 
$u^\star \Let \aset[]{u_k^\star}_{k=0}^{T-1}$ is \emph{optimal} for Problem~\ref{problem:original} if it is feasible and the cost under any other feasible control is no less than the cost under $u^\star$. We enforce the following assumption for well-posedness. 
\begin{assumption}  \label{assumption:feasible_and_sigma_positive}
There exists an optimal control for Problem~\ref{problem:original}.
\end{assumption}

Next, we state the first result for Problem~\ref{problem:original}.

\begin{proposition}[Equivalent feasible control form]\label{prop:adimissible_control_form}
Let \(\hat u \Let \{\hat u_k\}_{k=0}^{T-1}\) be any feasible control for
Problem~\ref{problem:original}.
Then, for $k=0,\ldots,T-1$ and $j\in\indexset$, there exist feedback matrix \(K_k(j)\in\mathbb{R}^{n_u\times n_x}\), feedforward vector \(v_k(j)\in\mathbb{R}^{n_u}\) and matrix $V_k(j)\in\mathbb{S}_+^{n_u}$ such that for any $\controlfiltration_k$-measurable random vector $\nu_k\in\R[n_u]$, independent of $x_k$ conditional on $q_k=j$, with 
$$
\mathbb{E}[\nu_k\mid q_k=j]=0,\quad \mathbb{E}[\nu_k\nu_k^\T\mid q_k=j]=V_k(j),
$$
the control law 
\begin{align}
u_k = K_k(q_k)\bigl(x_k-\mu_k(q_k)\bigr) + v_k(q_k) + \nu_k,
\end{align}
for $k=0,\ldots,T-1$, induces the same mean trajectory \(\mu(\indexset)\), the same covariance
trajectory \(\Sigma(\indexset)\), and the same cost \(J\) as \(\hat u\).
\end{proposition}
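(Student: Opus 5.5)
Given the feasible control \(\hat u\) with state trajectory \(\hat x\), the plan is to match, mode by mode and time by time, the first and second conditional moments of the joint vector \((\hat x_k,\hat u_k)\) given \(q_k=j\). The key observation is that the mode-conditioned means and covariances, and the cost, depend on the control only through these joint moments. Define
\[
\bar u_k(j)\Let\mathbb{E}[\hat u_k\mid q_k=j],\quad
\Sigma_k^{xu}(j)\Let\mathbb{E}\bigl[(\hat x_k-\mu_k(j))(\hat u_k-\bar u_k(j))^\T\mid q_k=j\bigr],\quad
\Sigma_k^{uu}(j)\Let \mathrm{Cov}(\hat u_k\mid q_k=j).
\]
Set \(v_k(j)\Let\bar u_k(j)\) and \(K_k(j)\Let \Sigma_k^{xu}(j)^\T\Sigma_k(j)^{\dagger}\) (Moore--Penrose pseudoinverse), and take \(V_k(j)\Let\Sigma_k^{uu}(j)-K_k(j)\Sigma_k(j)K_k(j)^\T\). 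This \(V_k(j)\) is the Schur complement of the positive semidefinite joint conditional covariance of \((\hat x_k,\hat u_k)\). By the generalized Schur complement lemma it is positive semidefinite, and \(\operatorname{range}\Sigma_k^{xu}(j)\subseteq\operatorname{range}\Sigma_k(j)\) gives \(K_k(j)\Sigma_k(j)=\Sigma_k^{xu}(j)^\T\). Hence the new control \(u_k=K_k(q_k)(x_k-\mu_k(q_k))+v_k(q_k)+\nu_k\), with \(\nu_k\) conditionally independent of \(x_k\), reproduces the conditional mean \(\bar u_k(j)\), the cross-covariance \(\Sigma_k^{xu}(j)\), and the control covariance \(\Sigma_k^{uu}(j)\), provided \(x_k\) has the same conditional mean \(\mu_k(j)\) and covariance \(\Sigma_k(j)\) as \(\hat x_k\).

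Next I will show by induction on \(k\) that the new closed loop has the same \(\mu_k(j),\Sigma_k(j)\) as under \(\hat u\). The base case holds by \eqref{eq:initial_condition}. For the inductive step, condition \eqref{eq:plain_mjls_system} on \(q_k=i\). Using the zero-mean and independence properties in \ref{prob:data:2}--\ref{prob:data:3}, and noting that \(\hat x_k,\hat u_k\) are \(\controlfiltration_k\)-measurable, the conditional mean of \(x_{k+1}\) given \(q_k=i\) is \(A_k(i)\mu_k(i)+B_k(i)\bar u_k(i)\). The conditional second moment is
\begin{multline*}
\mathbb{E}\bigl[(A x+Bu)(Ax+Bu)^\T\bigr]
+\sum_r A^{(r)}\,\mathbb{E}[xx^\T]\,A^{(r)\T}\\
+\sum_s B^{(s)}\,\mathbb{E}[uu^\T]\,B^{(s)\T}+DD^\T ,
\end{multline*}
where all expectations are conditional on \(q_k=i\). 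All cross terms vanish because the noises are mutually independent, zero mean, and independent of \(\statefiltration_k\) (hence of \(u_k\), which is \(\controlfiltration_k\)-measurable given the mode). These quantities depend only on the joint first and second conditional moments of \((x_k,u_k)\) given \(q_k=i\), and those coincide by the construction above.

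To pass to \(q_{k+1}=j\), I use the backward transition probabilities \(s_k^{ij}=\mathbb{P}(q_k=i\mid q_{k+1}=j)=p_k^{ij}\rho_k(i)/\rho_{k+1}(j)\), which are well defined by Assumption~\ref{assumption:rho_positive}. The conditional independence of \(x_{k+1}\) and \(q_{k+1}\) given \(q_k\) from \ref{prob:data:1} gives \(\mathbb{E}[x_{k+1}\mid q_{k+1}=j]=\sum_i s_k^{ij}\,\mathbb{E}[x_{k+1}\mid q_k=i]\), and the analogous identity holds for the second moment. Since \(\rho\) does not depend on the control, \(\mu_{k+1}(j)\) and \(\Sigma_{k+1}(j)\) agree. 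The unconditional terminal moments then agree by \eqref{eq:mean_covariance_relationship}, so feasibility is preserved. The cost \(\sum_k\sum_j\rho_k(j)\bigl(\operatorname{tr}(Q_k(j)\,\mathbb{E}[x_kx_k^\T\mid j])+\operatorname{tr}(R_k(j)\,\mathbb{E}[u_ku_k^\T\mid j])\bigr)\) also matches.

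I expect the main obstacle to be the conditioning bookkeeping. Specifically, I must make sure that the new state and control still satisfy the structural independence assumptions, namely that the noise at time \(k\) is independent of \(\nu_k\) given the mode, and that \ref{prob:data:1} applies to the new trajectory. I would handle this by taking \(\nu_k\) to be \(\controlfiltration_k\)-measurable as stated, so that the hypotheses on \(\statefiltration_k\) cover it. The singular-\(\Sigma_k(j)\) case is handled via the range inclusion noted above.
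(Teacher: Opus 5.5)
Your proposal is correct and follows essentially the same route as the paper. Both proofs match the joint conditional first and second moments of $(x_k,u_k)$ given $q_k=j$ through a Schur-complement factorization: the paper cites Lemma~\ref{lemma:for_lossless_proof}, and your choice $K_k(j)=\Sigma_k^{xu}(j)^\T\Sigma_k(j)^{\dagger}$ is an explicit instance of it. Both then propagate the mode-conditioned means and covariances by induction through the backward probabilities $s_k^{ij}$, and write the cost in terms of these moments.
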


The proof of Proposition~\ref{prop:adimissible_control_form} is given in Appendix~\ref{appendix:proof_of_admissible_control_form}. 
The next result establishes that if there are no multiplicative noise terms in~\ref{eq:plain_mjls_system}, the optimal control maintains a linear affine structure.

\begin{theorem}
Consider the dynamical system \eqref{eq:plain_mjls_system} with its associated data \ref{prob:data:1}--\ref{prob:data:3}. 
Assume $A_k(j)$ is nonsingular, and assume that, for all $r=1,\ldots,m_x$ and $s=1,\ldots,m_u$,  $A_k^{(r)}(j)=0$ and $B_k^{(s)}(j)=0$ for all $j\in\indexset$, and $k=0,\ldots,T-1$\footnote{The non-singularity assumption is standard in discrete-time models obtained from continuous-time linear dynamics over sufficiently small sampling intervals. 
For example, under a forward Euler discretization of \(\dot{x}_t=A_t(q_t)x_t+B_t(q_t)u_t\), one has \(A_k(j)=\identityMatrix_{n_x}+A_t(j)\delta t,\) which is invertible for all \(j\in\indexset\) when \(\delta t\) is sufficiently small.}. 
Then, there exists an optimal $u^\star$ for Problem~\ref{problem:original} of the form
\begin{equation}
    u_k^\star=K_k(q_k)(x_k-\mu_k(q_k))+v_k(q_k),  
    \label{eq:optimal_control_form}
\end{equation}
for $k=0,\ldots,T-1$, , where $K_k(j)\in\mathbb{R}^{n_u\times n_x}$ for $j\in\indexset$.
\label{thm:optiaml_control_form}
\end{theorem}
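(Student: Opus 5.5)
The plan is to start from Proposition~\ref{prop:adimissible_control_form}, which lets us restrict, without loss of generality, to controls of the form $u_k = K_k(q_k)(x_k-\mu_k(q_k)) + v_k(q_k) + \nu_k$, where $\nu_k$ is conditionally zero-mean with conditional covariance $V_k(j)\succeq 0$. The task is then to show that, when $A_k^{(r)}\equiv 0$ and $B_k^{(s)}\equiv 0$, an optimal control exists with $V_k(j)=0$ for all $k,j$.

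\emph{Mode-conditioned moment recursions.} Using the backward transition probabilities $s_k^{ij}$, which are well defined by Assumption~\ref{assumption:rho_positive}, I will write down the recursions for $\mu_{k+1}(j)$ and for the conditional second moments. The feedforward terms $v_k(i)$ enter only the means. The covariance recursion involves $A_k(i)\Sigma_k(i)A_k(i)^\T$, the cross terms $B_k(i)L_k(i)$ and their transposes with $L_k(i)\Let K_k(i)\Sigma_k(i)$, the input-covariance term $B_k(i)U_k(i)B_k(i)^\T$ with $U_k(i)\Let K_k(i)\Sigma_k(i)K_k(i)^\T+V_k(i)$, and $D_k(i)D_k(i)^\T$, plus a mean-spread term mixed by $s_k^{ij}$. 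The cost splits into a mean part and the trace part $\sum_{k,j}\rho_k(j)\operatorname{tr}\bigl(Q_k(j)\Sigma_k(j)+R_k(j)U_k(j)\bigr)$.

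\emph{Convex relaxation.} In the variables $(\mu,v,\Sigma,L,U)$, the covariance dynamics are linear, because the multiplicative terms (which would make $U$ couple to $x$-dependent noise) are absent. Replacing $V_k(j)\succeq 0$ by the Schur-complement LMI
\[
\begin{bmatrix} U_k(j) & L_k(j)\\ L_k(j)^\T & \Sigma_k(j)\end{bmatrix}\succeq 0
\]
gives a convex program (an SDP after the standard handling of the means) whose value is at most the optimal cost. Any feasible point yields a feasible control by taking $K=L\Sigma^{\dagger}$ and $V=U-L\Sigma^{\dagger}L^\T$. It therefore suffices to show that some optimal solution of this SDP has the LMI tight, meaning $V=0$.

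\emph{Tightness.} This is the main obstacle. I would attack it through the Lagrangian/KKT conditions, in the style of the single-mode argument of \cite{liu2024optimal}. Attach multipliers $\Lambda_{k+1}(j)$ to the covariance dynamics, a multiplier to the terminal constraint, and a PSD multiplier $M_k(j)$ to the LMI. Stationarity in $U_k(i)$ gives
\[
M_k(i)_{uu}=\rho_k(i)R_k(i)+B_k(i)^\T\Bigl(\sum_j p_k^{ij}\Lambda_{k+1}(j)\Bigr)B_k(i)-(\text{terms from the cross constraint}).
\]
Stationarity in $\Sigma_k$ gives a backward Lyapunov-type recursion for $\Lambda_k$ that involves $A_k(i)^\T\Lambda_{k+1}A_k(i)$.

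The key step is to show that the $uu$-block $M_k(i)_{uu}\succ 0$. Complementary slackness $\operatorname{tr}(M\cdot\text{LMI})=0$ then forces the Schur complement $U-L\Sigma^{\dagger}L^\T$ to vanish. Nonsingularity of $A_k(j)$ is what I expect to need here. One use is that it makes the backward recursion invertible. Another is that, after a change of variables with $A_k(i)^{-1}$, one can argue that if $M_{uu}$ were singular, there would be a direction along which $V$ can be traded for feedback without changing the covariance trajectory. That trade keeps the terminal constraints satisfied while not increasing the cost, so some optimal solution has $V=0$.

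If the KKT route stalls (for instance, on strong duality or on degeneracy of $\Sigma_k(j)$), my fallback is a direct construction. Given an optimal $(K,v,V)$ with $V\neq 0$, I would use invertibility of $A$ to propagate $\Sigma_k(j)\succ 0$ along a perturbed path. I would then absorb $B V B^\T$ into a modified gain $\tilde K$ solving a Riccati-like equation at the next step, and show that this does not increase the trace cost because $R\succ 0$.
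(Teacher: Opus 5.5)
\textbf{There is a genuine gap: the tightness $V=0$, which is the whole content of the theorem, is never established.} The setup also contains an error.

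\textbf{The relaxation is not convex in your variables.} In the MJLS setting the covariance recursion is \emph{not} linear in $(\mu,v,\Sigma,L,U)$. Mode mixing adds to $\Sigma_{k+1}(j)$ the term $\sum_i s_k^{ij}\bigl(A_k(i)\mu_k(i)+B_k(i)v_k(i)\bigr)\bigl(A_k(i)\mu_k(i)+B_k(i)v_k(i)\bigr)^\top-\mu_{k+1}(j)\mu_{k+1}(j)^\top$. This is quadratic in the means and sits inside an equality constraint, so the feasible set is nonconvex. Unlike the single-mode case, means and covariances do not decouple, and there is no ``standard handling of the means.'' You can repair this by passing to the lifted second moments $(\bar I,\bar U,\bar Y)$ of the paper's relaxation, where every constraint is affine.

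\textbf{The KKT route needs multipliers you have not shown to exist.} You give no constraint qualification, and a Slater point can genuinely fail to exist. First, $\bar I_0(j)$ is fixed, and it is singular whenever $\Sigma_{\mathrm{in}}(j)$ is. Second, the terminal equality can push every feasible point to the boundary of the cone. For instance, for a scalar single mode whose target variance equals the noise floor $d^2$, every feasible point has $V_{T-1}=0$, so the LMI at $k=T-1$ is singular. Without optimal multipliers you cannot use complementary slackness. If multipliers are granted, positivity of $M_{uu}$ is actually immediate and needs none of the ``trading'' you describe:
\begin{itemize}
\item With $\bar\Lambda=\sum_j s_k^{ij}\Lambda_{k+1}(j)$, stationarity gives $M_{uu}=\rho_k(i)R_k(i)+\tilde B_k(i)^\top\bar\Lambda\tilde B_k(i)$ and $M_{xu}=\tilde A_k(i)^\top\bar\Lambda\tilde B_k(i)$, with no extra cross terms.
\item A null vector $z$ of $M_{uu}$ must satisfy $M_{xu}z=0$, because $M\succeq 0$.
\item Invertibility of $A_k(i)$ then gives $\bar\Lambda\tilde B_k(i)z=0$, hence $z^\top R_k(i)z=0$ and $z=0$.
\end{itemize}
So the obstacle is duality, not this step.

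\textbf{The fallback assumes what must be proved.} Absorbing $BVB^\top$ into a new gain without raising the cost is exactly the nontrivial single-mode fact that affine feedback is optimal among randomized policies with prescribed endpoint moments. You assert it rather than prove it.

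\textbf{The paper avoids all of this with a decomposition you do not use.} It starts from an optimal $\hat u$ (Assumption~\ref{assumption:feasible_and_sigma_positive}) and works with the \emph{pre-jump} conditional moments $m_{k+1}(i)=\mathbb{E}[x_{k+1}\mid q_k=i]$ and $S_{k+1}(i)$. Because $x_{k+1}$ and $q_{k+1}$ are conditionally independent given $q_k$, these moments determine $\mu_{k+1}(j)$ and $\Sigma_{k+1}(j)$ through the weights $s_k^{ij}$. It therefore suffices, for each pair $(k,i)$, to solve on $\{q_k=i\}$ a single-mode one-step covariance steering problem. That problem runs from $(\mu_k^{\hat u}(i),\Sigma_k^{\hat u}(i))$ to $(m_{k+1}^{\hat u}(i),S_{k+1}^{\hat u}(i))$ and is feasible, as $\hat u_k$ witnesses. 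For it, \cite[Theorem~3]{liu2024optimal} supplies an optimal affine feedback whose control cost does not exceed that of $\hat u_k$; this is where invertibility of $A_k(i)$ enters. Stitching these feedbacks together by induction on $k$ reproduces the moment trajectory of $\hat u$. State costs therefore coincide, control costs do not increase, and no duality argument at the MJLS level is needed.

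To keep your global-relaxation route, you would have to supply a constraint qualification, or a perturbation or limiting argument, for the lifted SDP. Otherwise, replace your tightness step with this per-mode, per-step reduction.
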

A proof of  Theorem \ref{thm:optiaml_control_form} can be found in Appendix~\ref{appendix:proof_of_optimal_control_form}.

\begin{remark}\label{rem:thrm1:counter:example}
We highlight the fact that Theorem~\ref{thm:optiaml_control_form} does not generally hold under multiplicative noise. 
\color{black}
Actually, one can construct a stronger counterexample where a feasible control of the form~\ref{eq:optimal_control_form} does not exist.
\color{black}
Consider the single-mode one-step system with $n_x=2$, $n_u=1$ given by
\begin{align}
    x_1=x_0+\begin{pmatrix}
        1\\
        0
    \end{pmatrix}u+\begin{pmatrix}
        0\\
        1
    \end{pmatrix}u\eta.
    \nonumber
\end{align}
Let $x_0\sim\mathcal{N}(0,\identityMatrix_2)$, $\mu_1=0$ and $\Sigma_1=2\identityMatrix_2$. 
Applying the control law $u=\nu$, 
where $\nu$ is an independent random variable with zero-mean and unit-variance, it can be verified that $\mathbb{E}[x_1]=0$ and $\mathbb{E}[x_1x_1^\T]=2\identityMatrix_2$. 
However, if we apply a control law of the form $u=K x_0+v$, where $K \Let (k_1,\;k_2)\in\mathbb{R}^{1\times 2}$ and $v\in\mathbb{R}$, the condition $\mathbb{E}[x_1]=0$ implies that $v=0$.
Then, one has $\mathbb{E}[x_1x_1^\T]=\begin{pmatrix}
    (1+k_1)^2+k_2^2 & k_2\\
    k_2 & 1+k_1^2+k_2^2
\end{pmatrix}$.
However, there are no $k_1$ and $k_2$ such that $\mathbb{E}[x_1x_1^\T]=2\identityMatrix_2$.    
\end{remark}

\begin{remark}\label{rem:comments:on:prop1}
Although Proposition~\ref{prop:adimissible_control_form} shows that, without loss of generality, one may restrict attention to linear feedback control laws augmented with an independent random term, substituting the control form~\eqref{eq:optimal_control_form} directly into Problem~\ref{problem:original} still leads to a coupling between the mean and covariance dynamics. In the absence of multiplicative noise, i.e., when \(A_k^{(r)}=0\) and \(B_k^{(s)}=0\), \cite{shrivastava2025chance} studies this problem by first solving the mean-steering subproblem and then solving the covariance-steering subproblem with the mean trajectory fixed at the values obtained from the first stage. However, this sequential procedure is not guaranteed to yield an optimal solution to the original problem. We refer the reader to~\cite{shrivastava2025chance} for further details.  
\end{remark}

%% file: sections/3-Main_Result.tex
\section{Optimal covariance steering via the lifted System}

In this section, we show that, by introducing a lifted system and by exploiting an SDP formulation, the MJLS covariance steering problem can be solved in an \emph{optimal} fashion. 
We begin with the case when no chance constraints are imposed on the state or control trajectories. 
We refer to this setting as the \emph{unconstrained} formulation, although the problem still contains certain modified equality-type constraints. 
When no ambiguity arises, we simply call it the \emph{unconstrained problem}; see \S\ref{subsec:uncon:case} below.
Next, we study the setting in which two classes of chance constraints, namely, ball constraints and half-space constraints, are incorporated. 
For this \emph{constrained covariance steering problem} (see \S\ref{subsec:constrained:case}), we derive a sufficient condition that guarantees the chance constraints are satisfied, and the ensuing optimization problem can be solved efficiently
as an SDP. 
We also establish  an iterative scheme that uses the previous iterate as a reference, with the aim of reducing the conservatism inherent in the constrained SDP formulation.

\subsection{Unconstrained case}\label{subsec:uncon:case}

%Let \(n_{\tilde x} \Let n_x+1 \in \N\), and 
% \panos{$n_x + 1$ appears in only a few places. No need to introduce new notation. I tried to fix, but I may have missed a few.}
Define the lifted state
\begin{align}\label{eq:lifted:states}
    \tilde{x}_k \Let (x_k^{\T} \;\; 1)^\T \in \R[n_{x}+1] \quad\text{for }k=0,\ldots,T.
\end{align}
Then, for all \(k=0,\ldots,T-1\), $\tilde x_k$ satisfies a lifted Markov jump linear system with multiplicative noise:
\begin{align}
    &\tilde x_{k+1}=\tilde A_k(q_k)\tilde x_k+\tilde B_k(q_k) u_k + \sum_{r=1}^{m_x} \tilde A_k^{(r)}(q_k) \tilde x_k \xi_k^{(r)} 
    \nonumber\\
    &\quad + \sum_{s=1}^{m_u} \tilde B_k^{(s)}(q_k) u_k \eta_k^{(s)} +\tilde D_k(q_k) w_k,
    \label{eq:lifted_system}
\end{align}
where for $k=0,\ldots, T-1$, $j\in\indexset$, $r=1,\ldots,m_x$, and $s=1,\ldots,m_u$, various matrices in \eqref{eq:lifted_system} are given by
\begin{align}
    &\tilde A_k(j) \Let \begin{pmatrix}
        A_k(j) & 0\\
        0 & 1
    \end{pmatrix}, \quad
    \tilde B_k(j) \Let \begin{pmatrix}
        B_k(j)\\
        0
    \end{pmatrix},
    \nonumber\\
    &\tilde A_k^{(r)}(j) \Let \begin{pmatrix}
        A_k^{(r)}(j) & 0\\
        0 & 0
    \end{pmatrix}, \quad
    \tilde B_k^{(s)}(j) \Let \begin{pmatrix}
        B_k^{(s)}(j)\\
        0
    \end{pmatrix},\,\,
    \tilde D_k(j)=\begin{pmatrix}
        D_k(j)\\
        0
    \end{pmatrix}.
    \nonumber
\end{align}
For $k=0,\ldots,T$ and $j\in\indexset$, define the \emph{conditional} second moment of $\tilde x_k$ by
\begin{align}
    &\tilde I_k(j)\coloneqq \mathbb{E}\big[\tilde x_k\tilde x_k^\T \mid q_k=j \big]=\begin{pmatrix}
        \Sigma_k(j)+\mu_k(j)\mu_k(j)^\T & \mu_k(j)\\
        \mu_k(j)^\T & 1
    \end{pmatrix},
    \label{eq:relationship_conditional_second_moment}
\end{align}
and the \emph{unconditional} second moment of $\tilde x_k$ by
\begin{align}
    &\tilde I_k:=\mathbb{E}[\tilde x_k\tilde x_k^\T]
    =\sum_{j\in\indexset}\rho_k(j)\tilde I_k(j)=\begin{pmatrix}
        \Sigma_k+\mu_k\mu_k^\T & \mu_k\\
        \mu_k^\T & 1
    \end{pmatrix}.
    \label{eq:relationship_unconditional_second_moment}
\end{align}
The cost function, in the lifted variables, can be written as
\begin{align}
    \tilde J& \Let \sum_{k=0}^{T-1}\mathbb{E}[\tilde x_k^\T \tilde Q_k(q_k)\tilde x_k+u_k^\T R_k(q_k)u_k],
    \label{eq:lifted_cost}
\end{align}
where for $k=0,\ldots,T-1$ and $j\in\indexset$, {\small$\tilde Q_k(j) \Let \begin{pmatrix}
    Q_k(j) & 0\\
    0 & 0
\end{pmatrix}$.}

For notational convenience, for $j\in\indexset$, we define the following quantities: 
\begin{enumerate}[leftmargin=*]
    \item $I_k(j) \Let \mathbb{E}[x_kx_k^\T\mid q_k=j]$, $I_k \Let \mathbb{E}[x_kx_k^\T]$, for $k=0,\ldots,T$. \label{quantities_1}
    \item $v_k(j):=\mathbb{E}[u_k\mid q_k=j]$, $Y_k(j):=\mathbb{E}[u_ku_k^\T\mid q_k=j]$, $\tilde U_k(j):=\mathbb{E}[u_k\tilde x_k^\T\mid q_k=j]$, for $k=0,\ldots,T-1$.\label{quantities_2}
    \item  $I_\mathrm{in}(j):=\Sigma_\mathrm{in}(j)+\mu_\mathrm{in}(j)\mu_\mathrm{in}(j)^\T$, and $I_\mathrm{out}:=\Sigma_\mathrm{out}+\mu_\mathrm{out}\mu_\mathrm{out}^\T$.\label{quantities_3}
\end{enumerate}
%Recalling the notation $(A)_{1:p,1:q}$ for a given matrix \(A\), defined in \S\ref{subsec:notation}, we 
We also observe that\footnote{Recall the notation \((A)_{1:p,1:q}\) in \S \ref{subsec:notation}.}
\begin{align}
    &I_k(j)=\big(\tilde I_k(j)\big)_{1:n_x,1:n_x},\,\,I_k=\big(\tilde I_k\big)_{1:n_x,1:n_x},
    \nonumber\\
    &\mu_k(j)=\big(\tilde I_k(j)\big)_{1:n_x,n_{x}+1},\,\,v_k(j)=\big(\tilde U_k(j)\big)_{1:n_u,n_{x}+1}.
    \label{eq:I_with_I_tilde}
\end{align}

\begin{remark}
The lifted-system formulation \eqref{eq:lifted_system} is central to our approach. By embedding the mode-dependent means \(\mu(\indexset)\) and covariances \(\Sigma(\indexset)\) of the original state process into the second moments \(\tilde{I}(\indexset)\) of the lifted state \(\tilde{x}\), we obtain a unified description of the statistical steering objective in terms of a single matrix-valued moment variable. In contrast to approaches that decouple the problem into separate mean- and covariance-steering components \cite{shrivastava2025chance}, this formulation retains the intrinsic coupling between these quantities within one optimization framework. This yields the lifted second-moment steering problem in Problem~\ref{problem:lifted} below. 
\end{remark}

Consider the second moment steering problem of the lifted system.
\begin{problem}[Moment covariance steering]\label{problem:lifted}
Consider the lifted dynamical system \eqref{eq:lifted_system}. 
Then, the moment covariance steering problem is given by
\begin{empheqboxed}
\begin{align}
    \min_{u\in\mathcal{U}}\quad &\tilde J = \sum_{k=0}^{T-1}\mathbb{E}[\tilde x_k^\T \tilde Q_k(q_k)\tilde x_k+u_k^\T R_k(q_k)u_k],
    \\
    \sbjto \quad &\textup{dynamics }~\ref{eq:lifted_system},\label{eq:in_lifted_formulation_dynamics}
    \\
    &\tilde I_0(j)=\tilde I_\mathrm{in}(j),\; j\in\indexset,
    \label{eq:lifted_initial_condition}
    \\
    &\tilde I_T=\tilde I_\mathrm{out},
    \label{eq:lifted_terminal_condition}
\end{align}
\label{eq:augmented_problem}
where for $j\in\indexset$,
\begin{align}
\tilde I_\mathrm{in}(j) \Let \begin{pmatrix}
        I_\mathrm{in}(j) & \mu_\mathrm{in}(j)\\
        \mu_\mathrm{in}(j)^\T  & 1
    \end{pmatrix},\quad 
    \tilde I_\mathrm{out} \Let \begin{pmatrix}
        I_\mathrm{out} & \mu_\mathrm{out}\\
        \mu_\mathrm{out} & 1
    \end{pmatrix}.
    \nonumber
\end{align}
\end{empheqboxed}
\end{problem}

\emph{Feasibility} and \emph{optimality} of controls for Problem~\ref{problem:lifted} are defined similarly to those for Problem~\ref{problem:original}. Our next result, Theorem~\ref{theorem:equivalence} below establishes that the lifted problem is equivalent to Problem~\ref{problem:original}.

\begin{theorem}\label{theorem:equivalence}
Consider Problem~\ref{problem:original} and Problem~\ref{problem:lifted}, together with their associated data and notation. Let $u \Let \aset[]{u_k}_{k=0}^{T-1}$. 
Then, $u$ is a feasible control for Problem~\ref{problem:original} if and only if it is a feasible control for Problem~\ref{problem:lifted}. 
Moreover, the corresponding costs coincide, i.e., \(J = \tilde J.\)
\end{theorem}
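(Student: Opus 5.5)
The plan is to show that the lifted dynamics \eqref{eq:lifted_system} are simply \eqref{eq:plain_mjls_system} augmented with the trivial equation $1=1$. Once that is in place, the constraints and costs of the two problems match term by term through the moment identities \eqref{eq:relationship_conditional_second_moment} and \eqref{eq:relationship_unconditional_second_moment}.

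\textbf{Step 1: same trajectory.} Fix an admissible $u\in\mathcal U$; the admissible set is identical in both problems. Compute the block form of the right-hand side of \eqref{eq:lifted_system} using the definitions of $\tilde A_k(j)$, $\tilde B_k(j)$, $\tilde A_k^{(r)}(j)$, $\tilde B_k^{(s)}(j)$ and $\tilde D_k(j)$.
\begin{itemize}
\item The first $n_x$ rows reproduce exactly the right-hand side of \eqref{eq:plain_mjls_system}.
\item The last row equals $1\cdot 1 + 0 = 1$.
\end{itemize}
Hence, for every realization, $x$ solves \eqref{eq:plain_mjls_system} under $u$ if and only if $\tilde x_k=(x_k^\T\;1)^\T$ solves \eqref{eq:lifted_system} under $u$. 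Since the last coordinate is always $1$, every solution of the lifted system with a consistent initial condition has this form. So the dynamics constraints \eqref{eq:dynamics_equation_in_original_problem} and \eqref{eq:in_lifted_formulation_dynamics} describe the same state process.

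\textbf{Step 2: boundary conditions.} Under Assumption~\ref{assumption:rho_positive} the conditional moments are well defined, and \eqref{eq:relationship_conditional_second_moment} gives $\tilde I_0(j)$ in terms of $\Sigma_0(j)$ and $\mu_0(j)$.
\begin{itemize}
\item If \eqref{eq:initial_condition} holds, then $\tilde I_0(j)=\tilde I_\mathrm{in}(j)$, by the definition of $I_\mathrm{in}(j)$.
\item Conversely, if $\tilde I_0(j)=\tilde I_\mathrm{in}(j)$, read off the off-diagonal block via \eqref{eq:I_with_I_tilde} to get $\mu_0(j)=\mu_\mathrm{in}(j)$. Then the top-left block gives $\Sigma_0(j)+\mu_0(j)\mu_0(j)^\T=\Sigma_\mathrm{in}(j)+\mu_\mathrm{in}(j)\mu_\mathrm{in}(j)^\T$, so $\Sigma_0(j)=\Sigma_\mathrm{in}(j)$.
\end{itemize}
The terminal condition is handled the same way using the unconditional identity \eqref{eq:relationship_unconditional_second_moment}, where $\Sigma_T+\mu_T\mu_T^\T=\mathbb E[x_Tx_T^\T]$ follows from \eqref{eq:mean_covariance_relationship}. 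This gives $\mu_T=\mu_\mathrm{out}$ and $\Sigma_T=\Sigma_\mathrm{out}$ if and only if $\tilde I_T=\tilde I_\mathrm{out}$. The statement's $\tilde I_\mathrm{out}$ has lower-left block written $\mu_\mathrm{out}$, which should be read as $\mu_\mathrm{out}^\T$.

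\textbf{Step 3: costs.} Because $\tilde Q_k(j)$ is $Q_k(j)$ padded with zeros, $\tilde x_k^\T\tilde Q_k(q_k)\tilde x_k = x_k^\T Q_k(q_k)x_k$ pointwise, and the control terms are identical. Taking expectations gives $J=\tilde J$.

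The main obstacle is bookkeeping rather than mathematics. The step needing the most care is the converse direction in Step 2: one must extract the mean from the off-diagonal block \emph{before} the covariance, since the top-left block alone only fixes the second moment. One must also check that the filtration and independence structure in \ref{prob:data:1}--\ref{prob:data:3} is unaffected by the lift. This holds because $\tilde x_k$ generates the same $\sigma$-algebra as $x_k$, so admissibility and all the conditional-moment definitions coincide.
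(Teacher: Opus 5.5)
Your proposal is correct and follows essentially the same route as the paper. Both directions read the boundary conditions off the block forms of the conditional and unconditional lifted second moments, and $J=\tilde J$ follows because $\tilde Q_k(j)$ is $Q_k(j)$ padded with zeros; you additionally make explicit what the paper leaves implicit (the row-by-row check that the lifted dynamics reproduce the original ones, and extracting $\mu$ from the off-diagonal block before $\Sigma$), and your reading of the lower-left block of $\tilde I_\mathrm{out}$ as $\mu_\mathrm{out}^\T$ is right.
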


\begin{proof}
We begin with the reverse direction: let $u$ be a feasible control for Problem~\ref{problem:lifted}. 
From~\ref{eq:relationship_conditional_second_moment} and~\ref{eq:lifted_initial_condition}, we know that $\mu_0(j)=\mu_\mathrm{in}(j)$ and $\Sigma_0(j)=\Sigma_\mathrm{in}(j)$ for $j\in\indexset$.
Moreover, \ref{eq:relationship_unconditional_second_moment} and~\ref{eq:lifted_terminal_condition} imply that $\mu_T=\mu_\mathrm{out}$ and $\Sigma_T=\Sigma_\mathrm{out}$. 
Therefore, $u$ is also a feasible control for Problem~\ref{problem:original} and from~\ref{eq:lifted_cost}, we have $J=\tilde J$.

Conversely, let $u$ be a feasible control for Problem~\ref{problem:original}. From~\ref{eq:initial_condition} and~\ref{eq:relationship_conditional_second_moment}, we know that $\tilde I_0(j)=\tilde I_\mathrm{in}(j)$ and the expression $\tilde I_T=\tilde I_\mathrm{out}$ follows from~\ref{eq:terminal_condition} and~\ref{eq:relationship_unconditional_second_moment}. Therefore, $u$ is also a feasible control for Problem~\ref{problem:lifted}, and \ref{eq:lifted_cost} immediately gives, $\tilde J=J$. 
\end{proof}

Several corollaries of interest follow directly from Proposition~\ref{prop:adimissible_control_form}, Theorem~\ref{thm:optiaml_control_form}, and Theorem~\ref{theorem:equivalence}, which we present next. Corollary~\ref{cor:lifted_admissible_control_form} shows that any feasible control for Problem~\ref{problem:lifted} can be equivalently realized by a linear feedback control with an independent randomized term.
Corollary~\ref{cor:lifted_admissible_control_form:zero} shows that without multiplicative noise, the optimal control can be realized by a linear feedback control.

\begin{corollary}
Let \(\hat u \Let \aset[]{\hat u_k}_{k=0}^{T-1}\) be any feasible control for Problem~\ref{problem:lifted}. 
Then, for each \(k=0,\ldots,T-1\) and \(j\in\indexset\), there exist a feedback matrix \(\tilde K_k(j)\in\R[n_u\times (n_{x}+1)]\) and a matrix $V_k(j)\succeq 0$, such that, for any $\controlfiltration_k$-measurable random vector \(\nu_k\in\R[n_u]\), independent of \(\tilde x_k\) conditional on \(q_k=j\), such that
\[
\mathbb{E}[\nu_k\mid q_k=j]=0,
\quad
\mathbb{E}[\nu_k\nu_k^\T\mid q_k=j]=V_k(j),
\] the control law
\begin{align}\label{eq:lifted_admissible_control_form}
u_k = \tilde K_k(q_k)\tilde x_k + \nu_k,\quad k=0,\ldots,T-1,
\end{align}
induces the same moment trajectory \(\tilde I(\indexset)\) and the same cost \(\tilde J\) as \(\hat u\).
\label{cor:lifted_admissible_control_form}
\end{corollary}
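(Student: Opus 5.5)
The plan is to reduce the statement to Proposition~\ref{prop:adimissible_control_form} through Theorem~\ref{theorem:equivalence}, and then absorb the centering and the feedforward term into a single lifted gain. Let \(\hat u\) be feasible for Problem~\ref{problem:lifted}. By Theorem~\ref{theorem:equivalence}, \(\hat u\) is also feasible for Problem~\ref{problem:original}, and the two costs agree, i.e., \(J=\tilde J\). Proposition~\ref{prop:adimissible_control_form} therefore supplies \(K_k(j)\), \(v_k(j)\) and \(V_k(j)\succeq 0\) with the following property: for every admissible \(\nu_k\) with the stated conditional moments, the law \(u_k=K_k(q_k)(x_k-\mu_k(q_k))+v_k(q_k)+\nu_k\) reproduces \(\mu(\indexset)\), \(\Sigma(\indexset)\) and \(J\) of \(\hat u\).

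Next I define the lifted gain
\[
\tilde K_k(j)\Let \begin{pmatrix} K_k(j) & v_k(j)-K_k(j)\mu_k(j)\end{pmatrix}\in\R[n_u\times(n_x+1)],
\]
where \(\mu_k(j)\) denotes the conditional mean along the trajectory of \(\hat u\). Since the last component of \(\tilde x_k\) equals \(1\), we have \(\tilde K_k(q_k)\tilde x_k=K_k(q_k)x_k+v_k(q_k)-K_k(q_k)\mu_k(q_k)\). Also, \(\tilde x_k\) is a deterministic function of \(x_k\), so conditional independence of \(\nu_k\) from \(\tilde x_k\) given \(q_k=j\) is the same as conditional independence from \(x_k\). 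Hence \eqref{eq:lifted_admissible_control_form} coincides pathwise with the control law of Proposition~\ref{prop:adimissible_control_form}, under exactly the same hypotheses on \(\nu_k\).

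The main subtlety sits here. The constant \(\mu_k(j)\) inside \(\tilde K_k(j)\) is fixed in advance from \(\hat u\), while the new closed-loop trajectory computes its own conditional means. Both laws agree only if the means of the new trajectory equal those of \(\hat u\). This follows by induction on \(k\) and is in fact already contained in Proposition~\ref{prop:adimissible_control_form}. Its conclusion says that the new law reproduces \(\mu(\indexset)\). Therefore, at every step, centering with the fixed constant \(\mu_k(j)\) is the same as centering with the running conditional mean, and the two feedback laws define the same process.

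Finally, the new law reproduces \(\mu_k(j)\) and \(\Sigma_k(j)\) for all \(k\) and \(j\), so \eqref{eq:relationship_conditional_second_moment} shows that \(\tilde I_k(j)\) coincides with that of \(\hat u\). This identifies the whole moment trajectory \(\tilde I(\indexset)\). The cost part follows because the reproduced cost \(J\) equals \(\tilde J\) by \eqref{eq:lifted_cost}, or equivalently by Theorem~\ref{theorem:equivalence}.
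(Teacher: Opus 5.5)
Your proposal is correct and follows the paper's argument. The paper likewise combines Theorem~\ref{theorem:equivalence} with Proposition~\ref{prop:adimissible_control_form} and absorbs the centering and feedforward terms into $\tilde K_k(j)=\big(K_k(j)\;\;v_k(j)-K_k(j)\mu_k(j)\big)$; your remarks on conditional independence from $\tilde x_k$ versus $x_k$, and on fixed versus running conditional means, just spell out what the paper's one-line proof leaves implicit.
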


\begin{proof}
    The proof follows immediately from Proposition~\ref{prop:adimissible_control_form} and Theorem~\ref{theorem:equivalence}, by noticing that $\tilde K_k(j)=\big(K_k(j)\;\;v_k(j)-K_k(j)\mu_k(j)\big)$ for $k=0,\ldots,T-1$ and $j\in\indexset$.
\end{proof}

\begin{corollary}\label{cor:lifted_admissible_control_form:zero}
If $A_k^{(r)}(j)=0$ and $B_k^{(s)}(j)=0$ for $r=1,\ldots,m_x$, $s=1,\ldots,m_u$, $j\in\indexset$, and $k=0,\ldots,T-1$, then there
exists an optimal $u^\star \Let \{u^{\star}_k\}_{k=0}^{T-1}$ of the form
\begin{equation}
    u_k^\star=\tilde K_k(q_k)\tilde x_k\quad \text{for } k=0,\ldots,T-1,
    \label{eq:lifted_optimal_control_form}
\end{equation}
for Problem~\ref{problem:lifted}, where $\tilde K_k(j)\in \mathbb{R}^{n_u\times (n_{x}+1)}$ for $j\in\indexset$.
\end{corollary}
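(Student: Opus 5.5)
The plan is to combine Theorem~\ref{thm:optiaml_control_form} with Theorem~\ref{theorem:equivalence}, in the same way Corollary~\ref{cor:lifted_admissible_control_form} was obtained from Proposition~\ref{prop:adimissible_control_form}. Under the stated hypothesis $A_k^{(r)}(j)=0$ and $B_k^{(s)}(j)=0$, Theorem~\ref{thm:optiaml_control_form} supplies an optimal control $u^\star$ for Problem~\ref{problem:original} of the form \eqref{eq:optimal_control_form}. Here $K_k(j)$ and $v_k(j)$ are deterministic, and $\mu_k(j)$ denotes the conditional mean trajectory generated by $u^\star$ itself. Since $\mu_k(j)$ is deterministic for each fixed $k$ and $j$, I will define
\[
\tilde K_k(j)\Let \bigl(K_k(j)\;\;\, v_k(j)-K_k(j)\mu_k(j)\bigr)\in\R[n_u\times(n_x+1)].
\]
By the definition \eqref{eq:lifted:states} of $\tilde x_k$, this gives $\tilde K_k(q_k)\tilde x_k=K_k(q_k)x_k+v_k(q_k)-K_k(q_k)\mu_k(q_k)=u_k^\star$ pathwise. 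Hence $u^\star$ is exactly of the form \eqref{eq:lifted_optimal_control_form}.

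It then remains to transfer optimality from Problem~\ref{problem:original} to Problem~\ref{problem:lifted}. By Theorem~\ref{theorem:equivalence}, $u^\star$ is feasible for Problem~\ref{problem:lifted}, and its lifted cost satisfies $\tilde J(u^\star)=J(u^\star)$. Now let $u$ be any feasible control for Problem~\ref{problem:lifted}. The same theorem shows that $u$ is feasible for Problem~\ref{problem:original} and that $\tilde J(u)=J(u)$. Optimality of $u^\star$ in Problem~\ref{problem:original} gives $J(u)\ge J(u^\star)$, and therefore $\tilde J(u)\ge \tilde J(u^\star)$. This shows that $u^\star$ is optimal for Problem~\ref{problem:lifted}.

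There is no substantial obstacle here; the one point to handle carefully is the nonsingularity of $A_k(j)$. Theorem~\ref{thm:optiaml_control_form} assumes it, but the corollary does not state it explicitly, so I would either inherit it as a standing hypothesis or note it in the statement. A second minor point is that $\mu_k(j)$ inside $\tilde K_k(j)$ must be the mean trajectory induced by $u^\star$, not one fixed in advance. This causes no circularity, because $u^\star$ is given first and the gains are read off from it afterwards.
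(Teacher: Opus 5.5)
Your proposal is correct and follows the paper's own route. The paper omits this proof, saying it proceeds exactly as for Corollary~\ref{cor:lifted_admissible_control_form}: take the optimal affine law from Theorem~\ref{thm:optiaml_control_form}, set $\tilde K_k(j)=\bigl(K_k(j)\;\;v_k(j)-K_k(j)\mu_k(j)\bigr)$, and transfer optimality via Theorem~\ref{theorem:equivalence} (equivalently Corollary~\ref{cor:equivalence_optimal_control}). Your caveat is accurate: the corollary's statement silently inherits from Theorem~\ref{thm:optiaml_control_form} both the nonsingularity of $A_k(j)$ and the standing Assumption~\ref{assumption:feasible_and_sigma_positive}.
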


The proof of Corollary~\ref{cor:lifted_admissible_control_form:zero} proceeds in exactly the same manner as the proof of Corollary~\ref{cor:lifted_admissible_control_form}, and therefore is omitted.

\begin{corollary}
    Let $u^\star$ be an optimal control for Problem~\ref{problem:lifted}. 
    Then,  $u^\star$ is also an optimal control for Problem~\ref{problem:original}, and vice versa.
    \label{cor:equivalence_optimal_control}
\end{corollary}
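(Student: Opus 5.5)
The corollary follows from Theorem~\ref{theorem:equivalence} alone. That theorem gives two facts: the feasible sets of Problem~\ref{problem:original} and Problem~\ref{problem:lifted} coincide as subsets of $\mathcal{U}$, and the two cost functionals agree on this common feasible set, i.e., $J(u)=\tilde J(u)$ for every feasible $u$. Optimality is defined the same way for both problems: a control is optimal if it is feasible and no feasible control achieves a strictly smaller cost. So the argument reduces to transporting this definition across the identification supplied by the theorem.

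\emph{Forward direction.} Let $u^\star$ be optimal for Problem~\ref{problem:lifted}.
\begin{enumerate}
\item Since $u^\star$ is feasible for Problem~\ref{problem:lifted}, the ``if'' part of Theorem~\ref{theorem:equivalence} shows it is feasible for Problem~\ref{problem:original}.
\item Take any $u$ feasible for Problem~\ref{problem:original}. By Theorem~\ref{theorem:equivalence}, $u$ is also feasible for Problem~\ref{problem:lifted}.
\item Optimality of $u^\star$ in the lifted problem then gives $\tilde J(u^\star)\le \tilde J(u)$.
\item Applying the cost identity to both $u^\star$ and $u$ yields $J(u^\star)=\tilde J(u^\star)\le \tilde J(u)=J(u)$.
\end{enumerate}
Hence $u^\star$ is optimal for Problem~\ref{problem:original}.

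\emph{Reverse direction.} The argument is symmetric: swap the roles of the two problems and use the ``only if'' part of Theorem~\ref{theorem:equivalence}.

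One point needs care: the cost identity $J=\tilde J$ must be applied to \emph{each} feasible control separately, not only to the optimizer. Theorem~\ref{theorem:equivalence} does provide this, since it asserts the identity for every feasible $u$. It is also worth noting that the same control process $u$ is used in both problems, and it generates the same state trajectory $x$ (equivalently $\tilde x$) in each. The lifted dynamics \eqref{eq:lifted_system} reproduce \eqref{eq:plain_mjls_system} in their first $n_x$ components and the constant $1$ in the last. Once this is observed, no step presents a real obstacle.
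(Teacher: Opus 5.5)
Your proposal is correct and follows the same route as the paper, whose proof simply says the corollary follows immediately from Theorem~\ref{theorem:equivalence}. You have spelled out that step explicitly: the feasible sets coincide and $J=\tilde J$ holds on every feasible control, so the optimality inequality carries over in both directions.
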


\begin{proof}
    It follows immediately from Theorem~\ref{theorem:equivalence}.
\end{proof}

With the equivalence of Problems \ref{problem:original} and \ref{problem:lifted} and their optimality correspondence established, we now proceed to derive a lossless relaxation of the lifted problem in terms of moment variables.

\subsection*{A lossless relaxation}

For convenience, for $k=0,\ldots,T-1$ and $i,j\in\indexset$, define 
$$
\displaystyle s_k^{ij}\Let\mathbb{P}(q_k=i\mid q_{k+1}=j)=\frac{p_k^{ij}\rho_k(i)}{\rho_{k+1}(j)}.
$$ 
In light of Assumption \ref{assumption:rho_positive}, \(s_k^{ij}\) is well-defined. Plugging control law~\ref{eq:lifted_admissible_control_form} in \ref{eq:lifted_system}, the second moment propagation is then given, for \(k=0,\ldots,T-1\) and \(j\in\indexset\),  by
\begin{align}
    &\tilde I_{k+1}(j)= \sum_{i\in\indexset}s_k^{ij}\Big(\big(\tilde A_k(i)+\tilde B_k(i)\tilde K_k(i)\big)\tilde I_k(i)\big(\tilde A_k(i)
    \nonumber\\
    & +\tilde B_k(i)\tilde K_k(i)\big)^\T+\tilde B_k(i)V_k(i)\tilde B_k(i)^\T 
    \nonumber\\
    &+\sum_{r=1}^{m_x}\tilde A_k^{(r)}(i)\tilde I_k(i) \tilde A_k^{(r)}(i)^\T+\sum_{s=1}^{m_u}\tilde B_k^{(s)}(i)\big(\tilde K_k(i)\tilde I_k(i)\tilde K_k(i)^\T 
    \nonumber\\
    & + V_k(i)\big) \tilde B_k^{(s)}(i)^\T+\tilde D_k(i)\tilde D_k(i)^\T\Big).
    \label{eq:second_moment_propagation_by_optimal_control}
\end{align}

Equation~\ref{eq:second_moment_propagation_by_optimal_control} results in nonlinear terms, such as $\tilde K_k(i)\tilde I_k(i)$ and $\tilde K_k(i)\tilde I_k(i)\tilde K_k(i)^\T$, which makes the problem nonlinear and nonconvex.
Therefore, instead of considering the actual trajectories of $\tilde x$ and $u$, that is~\ref{eq:lifted_system} and~\ref{eq:lifted_admissible_control_form}, we relax Problem~\ref{problem:lifted} by only considering the associated moment variables, $\tilde I(S)$, $\tilde U(\indexset)$ and $Y(\indexset)$, with the following conditions: for $k=0,\ldots,T-1$ and $j\in\indexset$,
\begin{align}
    &\mathbb{E}[\tilde x_k\tilde x_k^\T\mid q_k=j]=\tilde I_k(j)\succeq 0,
    \nonumber\\
    &\mathbb{E}[u_ku_k^\T\mid q_k=j]=Y_k(j)\succeq 0,
   \,\,\mathbb{E}\left[\begin{pmatrix}
        \tilde x_k\\
        u_k
    \end{pmatrix}\begin{pmatrix}
        \tilde x_k\\
        u_k
    \end{pmatrix}^\T\;\bigg|\; q_k=j\right]=\begin{pmatrix}
        \tilde I_k(j) &  \tilde U_k(j)^\T\\
         \tilde U_k(j) &  Y_k(j)
    \end{pmatrix} \succeq 0,
    \nonumber
\end{align}
as well as the propagation equations of $\tilde I(\indexset)$, obtained from~\ref{eq:lifted_system}, given by
\begin{align}
    &\tilde I_{k+1}(j)=\sum_{i\in\indexset}s_k^{ij}\Big(\tilde A_k(i)\tilde I_k(i)\tilde A_k(i)^\T +\tilde A_k(i)\tilde U_k(i)^\T  \tilde B_k(i)^\T
    \nonumber\\
    &  +\tilde B_k(i)\tilde U_k(i)\tilde A_k(i)^\T+\tilde B_k(i)  Y_k(i)\tilde B_k(i)^\T
    \nonumber\\
    &  + \sum_{r=1}^{m_x} \tilde A_k^{(r)}(i)  \tilde I_k(i) \tilde A_k^{(r)}(i)^\T + \sum_{s=1}^{m_u} \tilde B_k^{(s)}(i) Y_k(i) \tilde B_k^{(s)}(i)^\T 
    \nonumber\\
    &+ \tilde D_k(i) \tilde D_k(i)^\T \Big)\coloneqq T_{k,j}(\tilde I_k(\indexset),\tilde U_k(\indexset),Y_k(\indexset)).
    \nonumber
    % \label{eq:lifted_relaxed_moment_propagation}
\end{align}
The cost function is written as
\begin{align}
    &\tilde J = \sum_{k=0}^{T-1}\sum_{j\in\indexset}\rho_k(j)\Big(\operatorname{tr}\big(\tilde Q_k(j)\tilde I_k(j)\big)+\operatorname{tr}\big( R_k(j)Y_k(j)\big)\Big)
     \teL L\big(\tilde I(\indexset),Y(\indexset)\big).\nonumber
\end{align}
To avoid  notational ambiguity, we employ $\bar I(\indexset) \Let \{\bar I_k(j)\}_{k=0}^T$, $\bar U(\indexset) \Let \{\bar U_k(j)\}_{k=0}^{T-1}$ and $\bar Y(\indexset) \Let \{\bar Y_k(j)\}_{k=0}^{T-1}$ to substitute $\tilde I(\indexset)$, $\tilde U(\indexset)$ and $Y(\indexset)$ as decision variables and relax Problem~\ref{problem:lifted} as
\begin{empheqboxed}  
\begin{align}
   & \min_{\substack{\bar I(\indexset), \bar U(\indexset), \\
    \bar Y(\indexset)}}  L(\bar I(\indexset), \bar Y(\indexset)),\label{eq:cost_in_relaxed}
    \\
 & \bar I_k(j)\succeq 0,\; \bar Y_k(j)\succeq 0,\; \begin{pmatrix}
        \bar I_k(j) &  \bar U_k(j)^\T\\
         \bar U_k(j) & \bar Y_k(j)
    \end{pmatrix} \succeq 0,\label{eq:guarantee_loessless_in_relaxed}
    \\
    &\bar I_{k+1}(j)=T_{k,j}(\bar I_k(\indexset),\bar U_k(\indexset),\bar Y_k(\indexset)),\label{eq:I_propagation_in_relaxed}
    \\
    &\bar I_0(j)=\tilde I_\mathrm{in}(j),\; j\in\indexset,\label{eq:initial_condition_in_relaxed}
    \\
    &\bar I_T\Let\sum_{j\in\indexset}\rho_T(j)\bar I_T(j)=\tilde I_\mathrm{out},\label{eq:terminal_condition_in_relaxed}
\end{align}
where~\ref{eq:guarantee_loessless_in_relaxed} and~\ref{eq:I_propagation_in_relaxed} hold for all $k=0,\ldots,T-1$ and $j\in\indexset$.
\end{empheqboxed}

\begin{remark}\label{rem:on:lossless:relaxation}
The preceding relaxation of Problem~\ref{problem:lifted} to~\eqref{eq:cost_in_relaxed}--\eqref{eq:terminal_condition_in_relaxed} is indeed \emph{lossless} and a proof is given in in Appendix~\ref{appen:aux:lemmas}; see Lemma~\ref{lemma:guarantee_lossless}. We show that every feasible solution of~\eqref{eq:cost_in_relaxed}--\eqref{eq:terminal_condition_in_relaxed} can be realized by a feasible control for Problem~\ref{problem:lifted}. 
Therefore, Problem~\ref{problem:lifted} admits an exact SDP reformulation given by~\eqref{eq:cost_in_relaxed}--\eqref{eq:terminal_condition_in_relaxed}.
\end{remark}

\begin{remark}
If the terminal covariance condition in \eqref{eq:terminal_condition_in_relaxed} is relaxed to satisfy an upper bound on the covariance, or is imposed mode-wise, the terminal lifted constraint is modified by replacing the equality constraint with the corresponding block equality/inequality conditions.
For the unconditional case, replacing $\Sigma_T \preceq \Sigma_\mathrm{out}$ amounts to
\begin{align}
    &\bigl(\bar I_T\bigr)_{1:n_x, 1:n_x} \preceq \bigl(\tilde I_\mathrm{out}\bigr)_{1:n_x,1:n_x},\,\,
    \bigl(\bar I_T\bigr)_{1:n_x, n_x+1} = \bigl(\tilde I_\mathrm{out}\bigr)_{1:n_x,n_x+1}.
    \nonumber
\end{align}
For mode-conditioned terminal specifications, the same structure applies for each $j\in\indexset$, namely,
\begin{align}
    \bar I_T(j) = \tilde I_\mathrm{out}(j),\nonumber
\end{align}
for equality constraints, and
\begin{align}
    &\bigl(\bar I_T(j)\bigr)_{1:n_x, 1:n_x} \preceq \bigl(\tilde I_\mathrm{out}(j)\bigr)_{1:n_x,1:n_x}, \,\,\bigl(\bar I_T(j)\bigr)_{1:n_x, n_x+1} = \bigl(\tilde I_\mathrm{out}(j)\bigr)_{1:n_x,n_x+1},\nonumber
\end{align}
for covariance upper bounds.
These substitutions preserve the exactness of the relaxation and the convexity of the resulting SDP.    
\end{remark}
%\end{remark}

We now move on to the constrained setting. Specifically, we focus on two types of chance constraints imposed on the state and control trajectories: ball constraints and half-space constraints.
We focus on the constraints at fixed time steps $\constridx\in\{0,\ldots,T-1\}$, derive sufficient conditions for these constraints, and note that the results extend naturally to other time steps.
% \fangji{I changed $k\in\{0,\ldots,T-1\}$ to $s\in\{0,\ldots,T-1\}$ in order not to abuse the notations. Sid, can you help correct it if you find anywhere unchanged?} \sid{Sure, I'll take another look at the afternoon.}

\subsection{Chance-constrained case}\label{subsec:constrained:case}

We begin with the analysis of the ball-constrained problem. 
\subsubsection{Ball constraints}\label{sec:ball_constraints}
Let \(\eps_x,\eps_u \in \lcrc{0}{1}\) be user-specified violation tolerances. 
Suppose that $a_\constridx\in\mathbb{R}^{n_x}$, $b_\constridx\in\mathbb{R}^{n_u}$, and $r_{\constridx,x}> 0$ and $r_{\constridx,u}> 0$. Consider the ball constraints on $x_\constridx$ and $u_\constridx$ as follows
\begin{align}
    &\mathbb{P}\bigl(\|x_\constridx-a_\constridx\| \le r_{\constridx,x}\bigr)\ge 1-\varepsilon_x,
    \label{eq:ball_constraints_on_x}
    \\
    &\mathbb{P}\bigl(\|u_\constridx-b_\constridx\| \le r_{\constridx,u}\bigr)\ge 1-\varepsilon_u.
    \label{eq:ball_constraints_on_u}
\end{align}
To obtain tractable SDP constraints, we formulate the following sufficient conditions for~\ref{eq:ball_constraints_on_x} and~\ref{eq:ball_constraints_on_u}.

\begin{proposition}
Consider the inequalities \eqref{eq:ball_constraints_on_x}--\eqref{eq:ball_constraints_on_u} along with their associated data. For $j\in\indexset$, let $a_\constridx^\mathrm{ref}(j)\in\mathbb{R}^{n_x}$ and $b_\constridx^\mathrm{ref}(j)\in\mathbb{R}^{n_u}$ be such that
 \begin{align}
    \|a_\constridx^\mathrm{ref}(j)-a_\constridx\|<r_{\constridx,x},\quad \|b_\constridx^\mathrm{ref}(j)-b_\constridx\|< r_{\constridx,u}.\label{eq:basic_constraint_for_a_ref_and_b_ref}
\end{align}
Then,~\ref{eq:ball_constraints_on_x} is implied by
\begin{align}
    &\sum_{j\in\indexset}\frac{\rho_\constridx(j)}{(r_{\constridx,x}-\|a_\constridx^\mathrm{ref}(j)-a_\constridx\|)^2}\Big(\operatorname{tr}\big(I_\constridx(j)\big)-2a_\constridx^\mathrm{ref}(j)^\T \mu_\constridx(j)
    \nonumber\\
    & +a_\constridx^\mathrm{ref}(j)^\T a_\constridx^\mathrm{ref}(j))\Big)\le \varepsilon_x,
    \label{eq:convexified_ball_constraints_for_x}
\end{align}
and~\ref{eq:ball_constraints_on_u} is implied by
\begin{align}
    & \sum_{j\in\indexset}\frac{\rho_\constridx(j)}{(r_{\constridx,u}-\|b_\constridx^\mathrm{ref}(j)-b_\constridx\|)^2}\Big(\operatorname{tr}\big(Y_\constridx(j)\big)-2b_\constridx^\mathrm{ref}(j)^\T v_\constridx(j)
     \nonumber\\
     & +b_\constridx^\mathrm{ref}(j)^\T b_\constridx^\mathrm{ref}(j)\Big)\le \varepsilon_u.\label{eq:convexified_ball_constraints_for_u}
\end{align}
\label{proposition:ball_constraints}
\end{proposition}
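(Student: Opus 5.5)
We argue through the complementary (violation) event, first conditionally on the mode and then averaging over modes. Only the state constraint \eqref{eq:ball_constraints_on_x} is treated in detail; the control constraint \eqref{eq:ball_constraints_on_u} follows verbatim with $x_\constridx$, $a_\constridx$, $a_\constridx^\mathrm{ref}(j)$, $r_{\constridx,x}$, $I_\constridx(j)$, $\mu_\constridx(j)$ replaced by $u_\constridx$, $b_\constridx$, $b_\constridx^\mathrm{ref}(j)$, $r_{\constridx,u}$, $Y_\constridx(j)$, $v_\constridx(j)$. The definitions in items \ref{quantities_1}--\ref{quantities_2} give $\mathbb{E}[u_\constridx u_\constridx^\T\mid q_\constridx=j]=Y_\constridx(j)$ and $\mathbb{E}[u_\constridx\mid q_\constridx=j]=v_\constridx(j)$, so the substitution is legitimate.

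\textbf{Step 1 (conditional containment).} Fix $j\in\indexset$ and set $\delta_j\Let r_{\constridx,x}-\|a_\constridx^\mathrm{ref}(j)-a_\constridx\|$, which is strictly positive by \eqref{eq:basic_constraint_for_a_ref_and_b_ref}. The triangle inequality gives
\[
\|x_\constridx-a_\constridx\|\le \|x_\constridx-a_\constridx^\mathrm{ref}(j)\|+\|a_\constridx^\mathrm{ref}(j)-a_\constridx\|,
\]
so $\|x_\constridx-a_\constridx^\mathrm{ref}(j)\|\le\delta_j$ implies $\|x_\constridx-a_\constridx\|\le r_{\constridx,x}$. Equivalently, the violation event is contained in $\{\|x_\constridx-a_\constridx^\mathrm{ref}(j)\|>\delta_j\}$. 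This yields
\[
\mathbb{P}\bigl(\|x_\constridx-a_\constridx\|>r_{\constridx,x}\mid q_\constridx=j\bigr)\le \mathbb{P}\bigl(\|x_\constridx-a_\constridx^\mathrm{ref}(j)\|>\delta_j\mid q_\constridx=j\bigr).
\]

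\textbf{Step 2 (conditional Markov/Chebyshev bound).} Apply Markov's inequality to the nonnegative random variable $\|x_\constridx-a_\constridx^\mathrm{ref}(j)\|^2$ under $\mathbb{P}(\cdot\mid q_\constridx=j)$; conditioning is well-defined by Assumption~\ref{assumption:rho_positive}. This bounds the right-hand side above by $\delta_j^{-2}\,\mathbb{E}[\|x_\constridx-a_\constridx^\mathrm{ref}(j)\|^2\mid q_\constridx=j]$. Expanding the square,
\[
\mathbb{E}[\|x_\constridx-a_\constridx^\mathrm{ref}(j)\|^2\mid q_\constridx=j]=\operatorname{tr}(I_\constridx(j))-2a_\constridx^\mathrm{ref}(j)^\T\mu_\constridx(j)+a_\constridx^\mathrm{ref}(j)^\T a_\constridx^\mathrm{ref}(j),
\]
where we use $\mathbb{E}[x_\constridx^\T x_\constridx\mid q_\constridx=j]=\operatorname{tr}(I_\constridx(j))$ and the definition of $\mu_\constridx(j)$.

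\textbf{Step 3 (averaging over modes).} By the law of total probability,
\[
\mathbb{P}(\|x_\constridx-a_\constridx\|>r_{\constridx,x})=\sum_{j\in\indexset}\rho_\constridx(j)\,\mathbb{P}(\cdot\mid q_\constridx=j).
\]
Inserting the bound from Step 2 for each mode shows that the violation probability is at most the left-hand side of \eqref{eq:convexified_ball_constraints_for_x}. Hence \eqref{eq:convexified_ball_constraints_for_x} gives violation probability at most $\varepsilon_x$, which is \eqref{eq:ball_constraints_on_x}.

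No step is a real obstacle. The one point needing care is Step 1: the reference center must be chosen mode-by-mode, and strict positivity of $\delta_j$ is exactly what makes the division in Step 2 legitimate. Also, all quantities are finite because admissible controls are square-integrable, and the state is then square-integrable by induction through \eqref{eq:plain_mjls_system}.
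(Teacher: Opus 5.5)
Your proof is correct and follows essentially the same route as the paper. Both arguments use the law of total probability over modes, the triangle-inequality containment of the violation event in $\{\|x_\constridx-a_\constridx^\mathrm{ref}(j)\|>r_{\constridx,x}-\|a_\constridx^\mathrm{ref}(j)-a_\constridx\|\}$, Markov's inequality on the squared norm conditional on $q_\constridx=j$, and the expansion of the conditional second moment; you simply average over modes last rather than first, which makes no difference.
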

A proof of Proposition~\ref{proposition:ball_constraints} is provided in Appendix~\ref{appendix:proof_ball_constraints}.

\begin{remark}
The reference variables \(a_\constridx^\mathrm{ref}(\indexset)\) and \(b_\constridx^\mathrm{ref}(\indexset)\) are free design parameters, subject only to~\eqref{eq:basic_constraint_for_a_ref_and_b_ref}. A straightforward choice is \(a_\constridx^\mathrm{ref}(j) \Let a_\constridx\) and \(b_\constridx^\mathrm{ref}(j) \Let b_\constridx\) for all \(j\in\indexset\), but this can yield rather conservative results. To reduce conservatism, we introduce a systematic iterative scheme for updating the reference variables, as described in \S\ref{sec:iterative_framework}.
\end{remark}

\subsubsection{Halfspace constraints}
\label{sec:polytope_constraints}

Let \(\eps_x,\eps_u \in \lcrc{0}{1}\) be user-specified violation tolerances. 
Suppose that $\alpha_{\constridx,x}\in\mathbb{R}^{n_x}$, $\alpha_{\constridx,u}\in\mathbb{R}^{n_u}$, $\beta_{\constridx,x} \in \R,$ and $\beta_{\constridx,u}\in\mathbb{R}$, and consider the half-space constraints
\begin{align}
    &\mathbb{P}(\alpha_{\constridx,x}^\T x_\constridx\le \beta_{\constridx,x})\ge 1-\varepsilon_x,
    \label{eq:polytope_constraints_on_x}
    \\
    &\mathbb{P}(\alpha_{\constridx,u}^\T u_\constridx\le \beta_{\constridx,u})\ge 1-\varepsilon_u.
    \label{eq:polytope_constraints_on_u}
\end{align}
We formulate the following sufficient conditions for~\ref{eq:polytope_constraints_on_x} and~\ref{eq:polytope_constraints_on_u}.

\begin{proposition}
Consider the inequalities \eqref{eq:polytope_constraints_on_x}--\eqref{eq:polytope_constraints_on_u} along with their associated data.
Let $\mu_\constridx^\mathrm{ref}(j)\in\mathbb{R}^{n_x}$ and $v_\constridx^\mathrm{ref}(j)\in\mathbb{R}^{n_u}$ for $j\in\indexset$.
Then \ref{eq:polytope_constraints_on_x} is implied by,  for all $j\in\indexset$,
\begin{align}
    &(1-\varepsilon_x)\alpha_{\constridx,x}^\T I_\constridx(j)\alpha_{\constridx,x} - 2\big(\mu_\constridx^\mathrm{ref}(j)^\T\alpha_{\constridx,x}
    \nonumber\\
    &\quad -\varepsilon_x\beta_{\constridx,x}\big)\alpha_{\constridx,x}^\T \mu_\constridx(j)\le \varepsilon_x\beta_{\constridx,x}^2-(\alpha_{\constridx,x}^\T\mu_\constridx^\mathrm{ref}(j))^2,\label{eq:convexified_polytope_constraints_on_x_1}
    \\
    &\alpha_{\constridx,x}^\T\mu_\constridx(j)\le \beta_{\constridx,x},\label{eq:convexified_polytope_constraints_on_x_2}
\end{align}
and~\ref{eq:polytope_constraints_on_u} is implied by, for all $j\in\indexset$,
\begin{align}
    &(1-\varepsilon_u)\alpha_{\constridx,u}^\T Y_\constridx(j)\alpha_{\constridx,u} - 2\big(v_\constridx^\mathrm{ref}(j)^\T\alpha_{\constridx,u}
    \nonumber\\
    &\quad -\varepsilon_u\beta_{\constridx,u}\big)\alpha_{\constridx,u}^\T v_\constridx(j)\le \varepsilon_u\beta_{\constridx,u}^2-(\alpha_{\constridx,u}^\T v_\constridx^\mathrm{ref}(j))^2,\label{eq:convexified_polytope_constraints_on_u_1}
    \\
    &\alpha_{\constridx,u}^\T v_\constridx(j)\le \beta_{\constridx,u}.\label{eq:convexified_polytope_constraints_on_u_2}
\end{align}
\label{proposition:polytope_constraints}
\end{proposition}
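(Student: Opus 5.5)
The plan is to work mode by mode, apply the one-sided Chebyshev (Cantelli) inequality to the conditional distribution of the scalar $\alpha_{\constridx,x}^\T x_\constridx$ given $q_\constridx=j$, and then aggregate over modes with the law of total probability. The control case \eqref{eq:convexified_polytope_constraints_on_u_1}--\eqref{eq:convexified_polytope_constraints_on_u_2} is identical after substituting $u_\constridx$, $Y_\constridx(j)$, $v_\constridx(j)$, $v_\constridx^\mathrm{ref}(j)$, $\alpha_{\constridx,u}$, $\beta_{\constridx,u}$, $\varepsilon_u$ for the state quantities. Hence I only describe the state case.

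\textbf{Step 1: conditional moments.} Fix $j\in\indexset$. Write $m_j\Let\alpha_{\constridx,x}^\T\mu_\constridx(j)$ and $\sigma_j^2\Let\alpha_{\constridx,x}^\T\Sigma_\constridx(j)\alpha_{\constridx,x}$. Since $I_\constridx(j)=\Sigma_\constridx(j)+\mu_\constridx(j)\mu_\constridx(j)^\T$, we have
\[
\sigma_j^2=\alpha_{\constridx,x}^\T I_\constridx(j)\alpha_{\constridx,x}-m_j^2 .
\]
Assumption~\ref{assumption:rho_positive} makes these conditional quantities well defined.

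\textbf{Step 2: conditional Cantelli bound.} By \eqref{eq:convexified_polytope_constraints_on_x_2} we have $\beta_{\constridx,x}-m_j\ge 0$. If $\beta_{\constridx,x}>m_j$, Cantelli's inequality gives
\[
\mathbb{P}(\alpha_{\constridx,x}^\T x_\constridx>\beta_{\constridx,x}\mid q_\constridx=j)\le\frac{\sigma_j^2}{\sigma_j^2+(\beta_{\constridx,x}-m_j)^2}.
\]
The right-hand side is at most $\varepsilon_x$ if and only if $(1-\varepsilon_x)\sigma_j^2\le\varepsilon_x(\beta_{\constridx,x}-m_j)^2$. Expanding and using Step 1, this is equivalent to
\[
(1-\varepsilon_x)\alpha_{\constridx,x}^\T I_\constridx(j)\alpha_{\constridx,x}-m_j^2+2\varepsilon_x\beta_{\constridx,x}m_j\le\varepsilon_x\beta_{\constridx,x}^2 .\tag{$\ast$}
\]

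\textbf{Step 3: linearizing the concave term.} The only nonconvex piece of $(\ast)$ is $-m_j^2$. Set $m_j^\mathrm{ref}\Let\alpha_{\constridx,x}^\T\mu_\constridx^\mathrm{ref}(j)$. Since $m_j^2\ge 2m_j^\mathrm{ref}m_j-(m_j^\mathrm{ref})^2$, it follows that $-m_j^2\le -2m_j^\mathrm{ref}m_j+(m_j^\mathrm{ref})^2$. Substituting this upper bound into $(\ast)$ yields exactly \eqref{eq:convexified_polytope_constraints_on_x_1}. So \eqref{eq:convexified_polytope_constraints_on_x_1} implies $(\ast)$, which in turn gives a conditional violation probability of at most $\varepsilon_x$.

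\textbf{Step 4: degenerate cases and aggregation.} This is the step where care is needed; it is mostly bookkeeping rather than a genuine obstacle.
\begin{itemize}
\item If $\beta_{\constridx,x}=m_j$ and $\varepsilon_x<1$, then $(\ast)$ reduces to $(1-\varepsilon_x)\sigma_j^2\le 0$. Hence $\sigma_j=0$, so $\alpha_{\constridx,x}^\T x_\constridx=m_j=\beta_{\constridx,x}$ almost surely given $q_\constridx=j$, and the conditional violation probability is $0$.
\item The case $\varepsilon_x=1$ is trivial.
\item When $\sigma_j=0$ and $\beta_{\constridx,x}>m_j$, Cantelli still holds directly, with violation probability $0$.
\end{itemize}
Finally, by the law of total probability,
\[
\mathbb{P}(\alpha_{\constridx,x}^\T x_\constridx>\beta_{\constridx,x})=\sum_{j\in\indexset}\rho_\constridx(j)\,\mathbb{P}(\cdot\mid q_\constridx=j)\le\varepsilon_x\sum_{j\in\indexset}\rho_\constridx(j)=\varepsilon_x,
\]
which is \eqref{eq:polytope_constraints_on_x}.
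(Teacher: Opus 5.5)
Your proposal is correct and takes the same route as the paper: a conditional Cantelli bound for each mode, the tangent-line inequality $m_j^2\ge 2m_j^{\mathrm{ref}}m_j-(m_j^{\mathrm{ref}})^2$ to remove the concave term, and aggregation via the law of total probability. The differences are cosmetic: you cross-multiply before linearizing, whereas the paper substitutes the lower bound into the numerator of the Cantelli ratio and then clears the denominator; and you handle explicitly the degenerate case $\beta_{\constridx,x}=m_j$, $\sigma_j=0$, where the paper's ratio becomes $0/0$.
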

A proof of Proposition~\ref{proposition:polytope_constraints} is deferred to Appendix~\ref{appendix:proof_polytope_constraints}, while the procedure for selecting the reference variables $\mu_\constridx^\mathrm{ref}(\indexset)$ and $v_\constridx^\mathrm{ref}(\indexset)$ is described in \S\ref{sec:iterative_framework}. We are now ready to establish an SDP formulation for the problem in the presence of chance constraints.

\subsubsection{SDP formulation}

Using~\ref{eq:I_with_I_tilde}, the \emph{state} constraints in~\ref{eq:convexified_ball_constraints_for_x} and~\ref{eq:convexified_polytope_constraints_on_x_1}-\ref{eq:convexified_polytope_constraints_on_x_2} can be uniformly written as
\begin{align}
\sum_{j\in \indexset}\operatorname{tr}\big(E_{\constridx,i}(j)\tilde I_\constridx(j)\big)\le c_{\constridx,i}, 
\label{eq:final_constraints_on_I}
\end{align}
for $i=1,\dots,\ell_\constridx$,
and the \emph{control} constraints in~\ref{eq:convexified_ball_constraints_for_u} and~\ref{eq:convexified_polytope_constraints_on_u_1}-\ref{eq:convexified_polytope_constraints_on_u_2} can be uniformly written as
\begin{align}
&\sum_{j\in S}\Big(\operatorname{tr}\big(F_{\constridx,i}(j)Y_\constridx(j)\big)
+\operatorname{tr}\!\big(G_{\constridx,i}(j)\tilde U_\constridx(j)\big)\Big)\le d_{\constridx,i},
\label{eq:final_constraints_on_U_Y}
\end{align}
for \( i=1,\dots,t_\constridx\).
Here, $\ell_\constridx$ and $t_\constridx$ denote the total number of state and control constraints, respectively, the index $i$ enumerates the corresponding constraints, $E_{\constridx,i}(j), F_{\constridx,i}(j), G_{\constridx,i}(j)$ are the coefficient matrices obtained by rewriting the inequalities in affine trace form, and $c_{\constridx,i}, d_{\constridx,i}$ are the corresponding scalar bounds. 
The exact expressions for the tuple
$E_{\constridx,i}(\indexset), F_{\constridx,i}(\indexset), G_{\constridx,i}(\indexset), c_{\constridx,i}, d_{\constridx,i}$
are provided in Appendix~\ref{appen:expressions}.

From Proposition~\ref{proposition:ball_constraints} and Proposition~\ref{proposition:polytope_constraints}, Problem~\ref{problem:lifted} with ball and/or half-space chance constraints admits the following conservative reformulation:
\begin{align}
    \min_{u\in \mathcal{U}} \quad & \ref{eq:lifted_cost},
\label{eq:relaxed_constrained_problem_begin}
\\
\text{s.t.}\hspace{1em}
& \ref{eq:in_lifted_formulation_dynamics}\text{--}\ref{eq:lifted_terminal_condition},\;\ref{eq:final_constraints_on_I}\text{--}\ref{eq:final_constraints_on_U_Y}.
\label{eq:relaxed_constrained_problem_end}
\end{align}
% Moreover, feasibility of \eqref{eq:relaxed_constrained_problem_begin}--\eqref{eq:relaxed_constrained_problem_end} \sid{What do you mean by feasibility of 41-42? I don't recommend writing this.} implies satisfaction of the original chance constraints. 
Applying the same lossless moment relaxation as in the unconstrained case yields the SDP
\begin{empheqboxed}
\begin{align}
\min_{\substack{\bar I(\indexset), \bar U(\indexset),\\ \bar Y(\indexset)}} \quad &\ref{eq:cost_in_relaxed},
\label{eq:lossless_relaxed_constrained_problem_begin}
\\
\text{s.t.}\hspace{2em}  &\ref{eq:guarantee_loessless_in_relaxed}\text{--}\ref{eq:terminal_condition_in_relaxed}, 
\\
&\hspace*{-4em}\sum_{j\in\indexset}\operatorname{tr}\big(E_{\constridx,i}(j)\bar I_\constridx(j)\big)\le c_{\constridx,i} \text{ for }i=1,\dots,\ell_\constridx,
\\
&\hspace*{-4em}\sum_{j\in S}\Big(\operatorname{tr}\big(F_{\constridx,i}(j)\bar Y_\constridx(j)\big)
+\operatorname{tr}\!\big(G_{\constridx,i}(j)\bar U_\constridx(j)\big)\Big)
\le d_{\constridx,i}
\nonumber\\
&\quad \ \text{ for } i=1,\ldots,t_\constridx.\label{eq:lossless_relaxed_constrained_problem_end}
\end{align}
\end{empheqboxed}

\begin{proof}
The proof follows immediately from Lemma~\ref{lemma:guarantee_lossless}.
\end{proof}

\color{black}

\subsection{An iterative framework with tolerance relaxation}
\label{sec:iterative_framework}

The sufficient conditions developed in Sections~\ref{sec:ball_constraints} and~\ref{sec:polytope_constraints} depend on the reference variables $a_\constridx^{\mathrm{ref}}(\indexset)$, $b_\constridx^{\mathrm{ref}}(\indexset)$, $\mu_\constridx^{\mathrm{ref}}(\indexset)$, and $v_\constridx^{\mathrm{ref}}(\indexset)$.
Although these sufficient conditions yield an SDP whose solution is feasible for the original chance-constrained problem, their tightness depends strongly on the choice of these reference variables. 
In particular, poor reference values may lead to overly conservative bounds, and may even render the resulting SDP~\eqref{eq:lossless_relaxed_constrained_problem_begin}--\eqref{eq:lossless_relaxed_constrained_problem_end}
infeasible when the prescribed violation tolerances are small.

The preceding observation 
motivates an outer iterative procedure for the choice of reference variables.
Starting from an initial choice of reference variables, we solve the SDP obtained in Section~\ref{sec:ball_constraints} and~\ref{sec:polytope_constraints}, use the returned moments and conditional means to construct improved reference values, and then resolve the SDP with the updated references. Since the bounds in Proposition~\ref{proposition:ball_constraints} and Proposition~\ref{proposition:polytope_constraints}, become tighter when the reference variables are chosen closer to the returned state and control statistics, this procedure is intended to reduce conservatism across iterations. 
Moreover, because the SDP~\eqref{eq:lossless_relaxed_constrained_problem_begin}--\eqref{eq:lossless_relaxed_constrained_problem_end} induced by the initial reference values may be infeasible for the desired tolerances, we incorporate a tolerance-relaxation mechanism. Specifically, we first solve relaxed subproblems with enlarged violation tolerances and then gradually decrease these tolerances toward their target values. This yields the iterative scheme summarized in Algorithm \(\algomjls\).

Specifically, in the proof of Proposition~\ref{proposition:ball_constraints} and Proposition~\ref{proposition:polytope_constraints}, we showed that incorporating $a_\constridx^\mathrm{ref}(\indexset)$ and $b_\constridx^\mathrm{ref}(\indexset)$ allows us to upper bound the  conditional violation probabilities $\mathbb{P}(\|x_\constridx-a_\constridx\|>r_{\constridx,x}\mid q_\constridx=j)$ and $\mathbb{P}(\|u_\constridx-b_\constridx\|>r_{\constridx,u}\mid q_\constridx=j)$ by
\begin{align}
    &\sum_{j\in\indexset}\rho_k(j)\frac{\mathbb{E}\big[\|x_\constridx-a_\constridx^\mathrm{ref}(j)\|^2\mid q_\constridx=j\big]}{(r_{\constridx,x}-\|a_\constridx^\mathrm{ref}(j)-a_\constridx\|)^2}
     \label{eq:upper_bound_of_ball_probabilitiesA}
\end{align}
and
\begin{align}
 &\sum_{j\in\indexset}\rho_k(j)\frac{\mathbb{E}\big[\|u_\constridx-b_\constridx^\mathrm{ref}(j)\|^2\mid q_\constridx=j\big]}{(r_{\constridx,u}-\|b_\constridx^\mathrm{ref}(j)-b_\constridx\|)^2},
    \label{eq:upper_bound_of_ball_probabilitiesB}
\end{align}
respectively.
Applying Lemma~\ref{lemma:ball_reference}, the analytical minimizers of~\ref{eq:upper_bound_of_ball_probabilitiesA}-\ref{eq:upper_bound_of_ball_probabilitiesB} is achieved by setting the reference 
\begin{align}
    a_\constridx^{\mathrm{ref}}(j)=\begin{cases}
        a_\constridx+
        \left(\frac{r_{\constridx,x}\|o_x(j)\|-c_x(j)}{r_{\constridx,x}-\|o_x(j)\|}\right)\frac{o_x(j)}{\|o_x(j)\|},\quad \text{if } \mathcal{T}_x \text{ holds},
        \\[12pt]
        a_\constridx,  \qquad\text{otherwise,}
    \end{cases}
    \label{eq:a_ref}
    \\
    b_\constridx^{\mathrm{ref}}(j)=\begin{cases}
        b_\constridx+
        \left(\frac{r_{\constridx,u}\|o_u(j)\|-c_u(j)}{r_{\constridx,u}-\|o_u(j)\|}\right)\frac{o_u(j)}{\|o_u(j)\|},\quad \text{if } \mathcal{T}_u\text{ holds},
        \\[12pt]
        b_\constridx,  \qquad\text{otherwise,}
    \end{cases}
    \label{eq:b_ref}
\end{align}
where $\mathcal{T}_x \Let \{\mu_\constridx(j)\neq a_\constridx,\, \|o_x(j)\|<r_{\constridx,x},\,c_x(j)<r_{\constridx,x}\|o_x(j)\|\}$, and $\mathcal{T}_u \Let \{v_\constridx(j)\neq b_\constridx,\, \|o_u(j)\|<r_{\constridx,u},\,c_u(j)<r_{\constridx,u}\|o_u(j)\|\}$ and the other quantities in \eqref{eq:a_ref}--\eqref{eq:b_ref} are given by $c_x(j) \Let \operatorname{tr}\big(I_\constridx(j)\big) - 2\mu_\constridx(j)^\T a_\constridx + \|a_\constridx\|^2$, $c_u(j) \Let \operatorname{tr}\big(Y_\constridx(j)\big) - 2v_\constridx(j)^\T b_\constridx + \|b_\constridx\|^2$, $o_x(j) \Let\mu_\constridx(j)-a_\constridx$, and $o_u(j) \Let v_\constridx(j)-b_\constridx$.
The $\mu_\constridx^\mathrm{ref}(\indexset)$ is used to provide a linear lower bound in~\ref{eq:polytope_proof_temp2} (see Appendix \ref{appendix:proof_polytope_constraints}). 
Note that when $\mu_\constridx^\mathrm{ref}(j)=\mu_\constridx(j)$,~\ref{eq:polytope_proof_temp2} becomes an equality. 
A similar conclusion applies to $v_\constridx^\mathrm{ref}(\indexset)$ as well. Therefore, an ideal choice for $\mu_\constridx^\mathrm{ref}(\indexset)$ and $v_\constridx^\mathrm{ref}(\indexset)$ is 
\begin{align}\label{eq:uv_ref}
\mu_\constridx^\mathrm{ref}(j)=\mu_\constridx(j) \text{  and  }v_\constridx^\mathrm{ref}(j)=v_\constridx(j).
\end{align}
Directly using the reference variables computed using~\ref{eq:a_ref}-\ref{eq:uv_ref} does not give us fixed reference variables, which is required to formulate~\ref{eq:convexified_ball_constraints_for_x}-\ref{eq:convexified_ball_constraints_for_u} and~\ref{eq:convexified_polytope_constraints_on_x_1}-\ref{eq:convexified_polytope_constraints_on_u_2} in terms of LMIs, since $\tilde I_\constridx(\indexset)$, $\tilde U_\constridx(\indexset)$, $Y_\constridx(\indexset)$, $\mu_\constridx(\indexset)$ and $v_\constridx(\indexset)$ are undetermined decision variables.
Therefore, we propose to use an iterative approach.
For the first iteration, the reference variables are initialized arbitrarily and we use these values to solve~\ref{eq:relaxed_constrained_problem_begin}-\ref{eq:relaxed_constrained_problem_end}.
In each subsequent iteration, the reference variables are computed using the $\tilde I_\constridx(\indexset)$, $\tilde U_\constridx(\indexset)$, $Y_\constridx(\indexset)$, $\mu_\constridx(\indexset)$ and $v_\constridx(\indexset)$ values obtained from the previous iteration.
This yields a convex SDP, which is solved to generate better reference variables, which, in turn, yield tighter bounds for the subsequent iteration.
Nevertheless, it is still possible that the initial reference variables may yield overly conservative bounds, potentially rendering the SDP infeasible during the first few iterations.
%
%
% \panos{The next seems repetitive. I believe you mentioned that already}
% \fangji{\red{I don't think this is mentioned anywhere before. But I made some changes. Can you maybe check the current version?}}
\color{black}
To address this issue, we adopt a sequence of scaling factors $\{\gamma^{(m)}\}_{m=1}^M$, such that $\gamma^{(1)}\ge 1$,  gradually decreasing to $\gamma^{(M)}=1$, to the violation tolerances.
The underlying rationale is that a larger tolerance relaxes the constraints~\ref{eq:final_constraints_on_I} and~\ref{eq:final_constraints_on_U_Y}, thereby increasing the likelihood of feasibility. 
Once improved reference variables are obtained, they can be used to solve the problem under smaller tolerance values.
The pseudocode for the iterative framework is provided in \(\algomjls\).
% Note that $\gamma^{(m)}=1$ is required in the stopping criteria.
\color{black}

{
\renewcommand{\algorithmcfname}{\(\textsf{MJLS-CovSteer}\)}
\renewcommand{\thealgocf}{}
\begin{algorithm2e}[t]
\SetAlgoLined
\DontPrintSemicolon

\SetKwInOut{ini}{Initialize}
\SetKwInOut{giv}{Data}

\giv{$\{\gamma^{(m)}\}_{m=1}^M$, $\varepsilon_x$, $\varepsilon_u$}
\ini{$a_\constridx^\mathrm{ref}(\indexset),\,b_\constridx^\mathrm{ref}(\indexset),\,\mu_\constridx^\mathrm{ref}(\indexset),\,v_\constridx^\mathrm{ref}(\indexset)$}

\For{$m=1,\dots,M$}{
\(\circ\) $\varepsilon_x' \leftarrow \min(\gamma^{(m)} \varepsilon_x,1)$,\quad 
$\varepsilon_u' \leftarrow \min(\gamma^{(m)} \varepsilon_u, 1)$\;

\(\circ\) Use $a_\constridx^\mathrm{ref}(\indexset),\,b_\constridx^\mathrm{ref}(\indexset),\,\mu_\constridx^\mathrm{ref}(\indexset),\,v_\constridx^\mathrm{ref}(\indexset),\,\varepsilon_x',\,\varepsilon_u'$
to solve~\eqref{eq:lossless_relaxed_constrained_problem_begin}--\eqref{eq:lossless_relaxed_constrained_problem_end}\;

\(\circ\) Update $a_\constridx^\mathrm{ref}(\indexset),\,b_\constridx^\mathrm{ref}(\indexset),\,\mu_\constridx^\mathrm{ref}(\indexset),\,v_\constridx^\mathrm{ref}(\indexset)$
using~\eqref{eq:a_ref}--\eqref{eq:uv_ref} with the returned values of
$\tilde I_\constridx(\indexset)$, $\tilde U_\constridx(\indexset)$, $Y_\constridx(\indexset)$, $\mu_\constridx(\indexset)$, and $v_\constridx(\indexset)$\;

% \If{the stopping criterion is satisfied}{
% \textbf{return} solution\;
% }
}
\textbf{return} solution\;
\caption{An iterative framework}
\label{alg:mljs:cov:steer}
\end{algorithm2e}
}

%% file: sections/4-Numerics.tex
\section{Numerical results}
\label{sec:experiments}
\begin{figure*}[t]
\centering
    \subfloat[$m=1$]{%
        \includegraphics[width=0.48\textwidth]{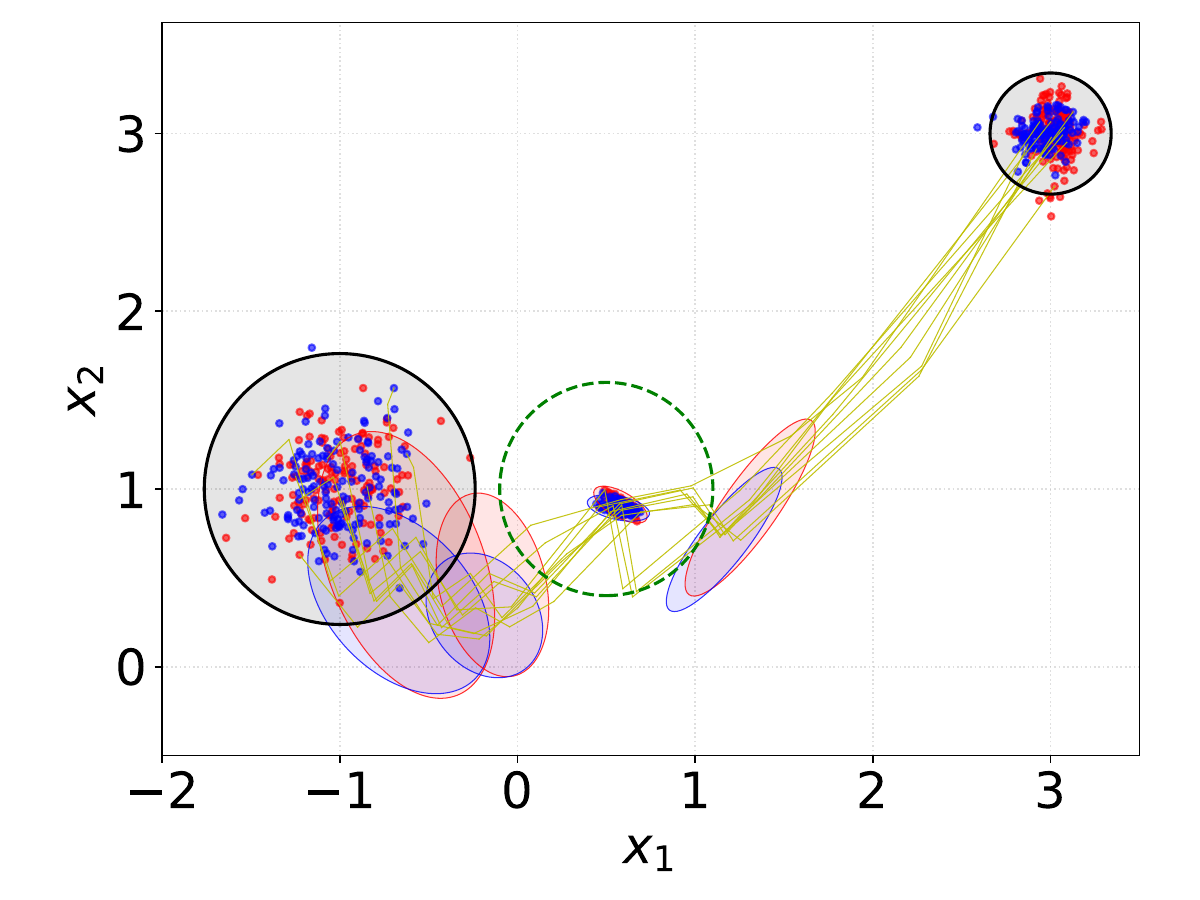}}
    \hfill
    \subfloat[$m=5$]{%
        \includegraphics[width=0.48\textwidth]{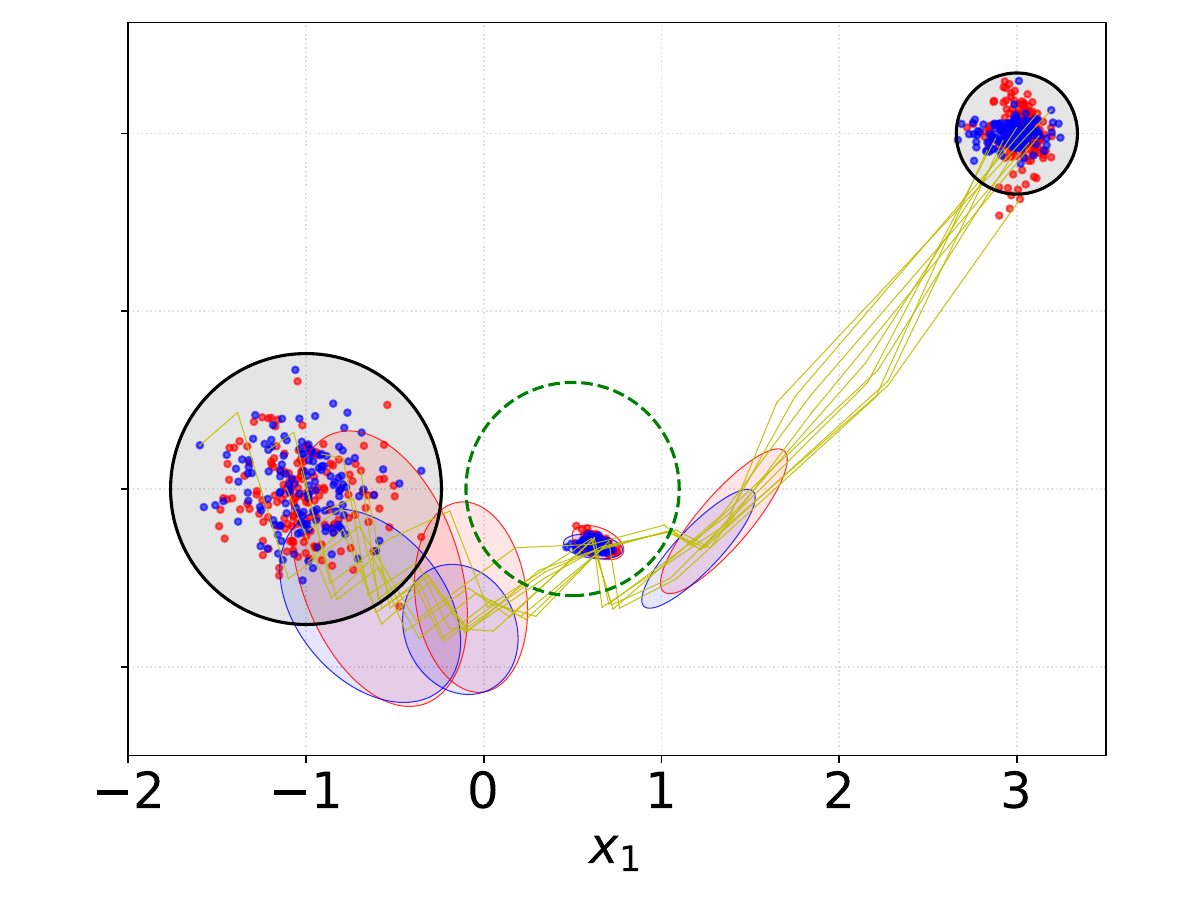}}\\[0.5em]
    \subfloat[$m=10$]{%
        \includegraphics[width=0.48\textwidth]{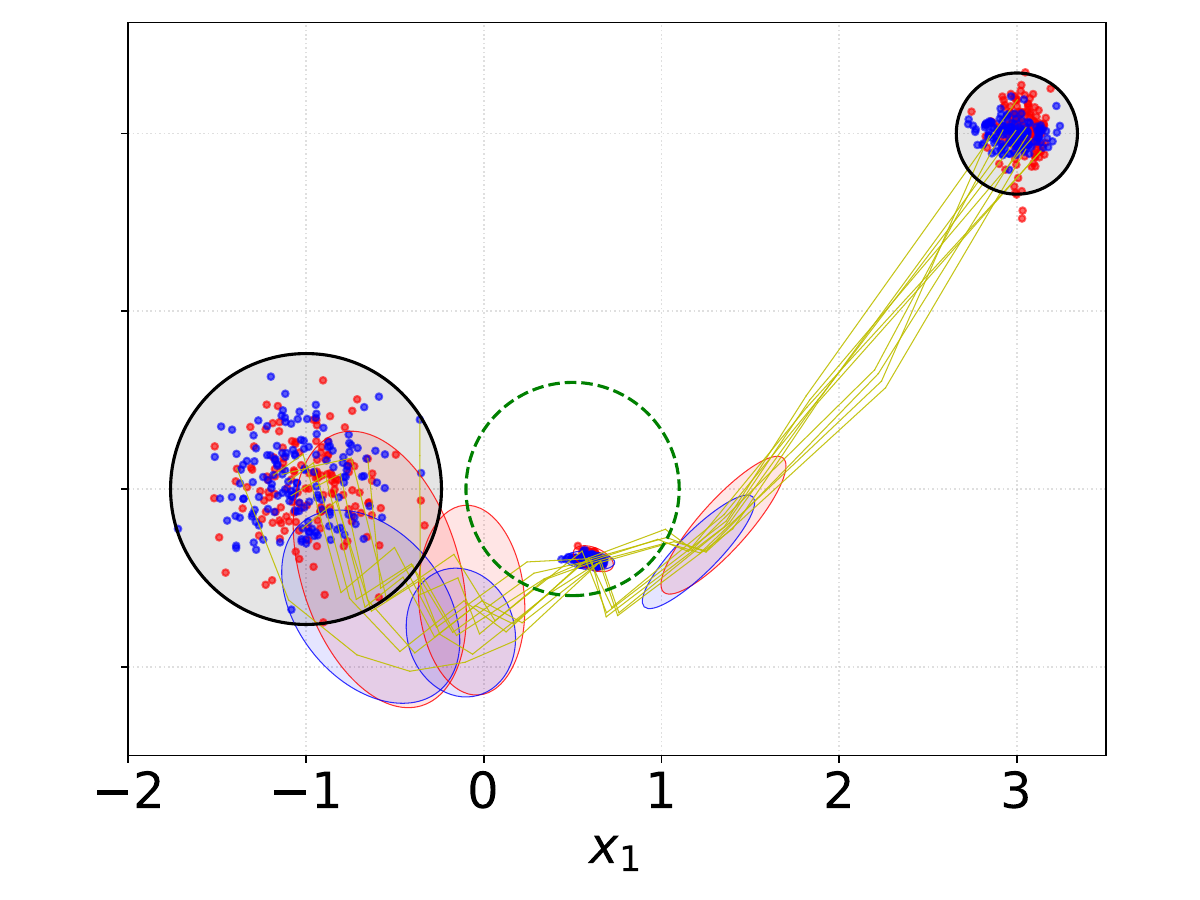}}
    \hfill
    \subfloat[Cost history]{%
        \includegraphics[width=0.48\textwidth]{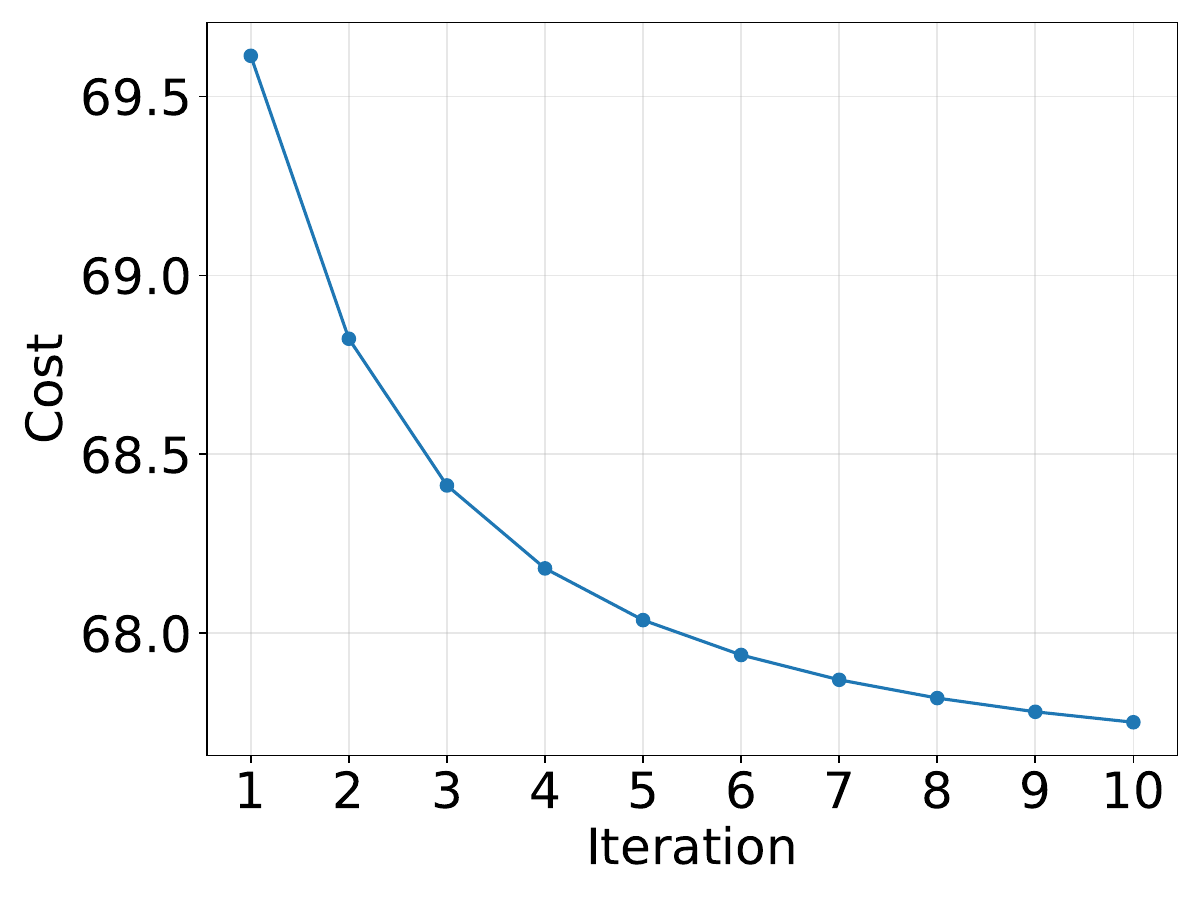}}
    \caption{Results for $\gamma^{(m)}=1$ for each $m$. In this case, we enforce $x_6$ to stay within the norm ball of radius $0.6$ centered at $(0.5,\,1)$ with high probability. (a)--(c) show the sampled state trajectories at iterations $m=1,5,10$, respectively. 
    The red and blue ellipsoids are the $0.997$-confidence ellipsoids at $k=2,4,6,8$ induced by the Gaussian distributions $\mathcal{N}(\mu_k(j),\Sigma_k(j))$, $j=1,2$, respectively; the red and blue dots are the sampled states at $k=0,6,T$ for the two modes, respectively; the yellow lines are the sampled trajectories. (d) shows the corresponding cost history over the iterations. This figure, as well as Figures~\ref{fig:reduced_gamma} and~\ref{fig:hedging_results}, is plotted by assuming a Gaussian initial distribution for each mode. This assumption is used only for illustration; the results do not rely on the exact initial distribution, provided that the initial conditional means and covariances for each mode are specified.}
    \label{fig:results_for_fixed_gamma}
\end{figure*}

\begin{figure*}[t]
\centering
    \subfloat[Unconstrained]{%
        \includegraphics[width=0.48\textwidth]{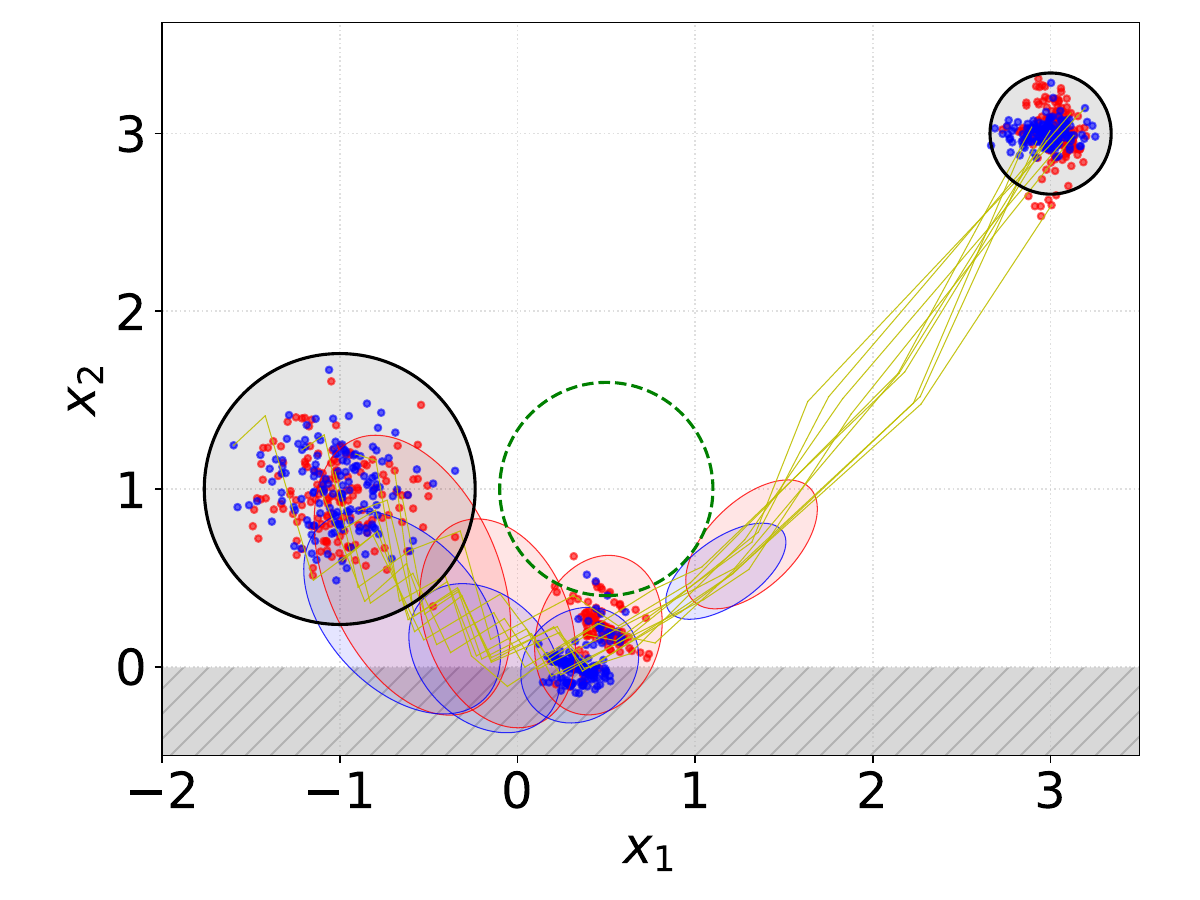}}
    \hfill
    \subfloat[Constrained]{%
        \includegraphics[width=0.48\textwidth]{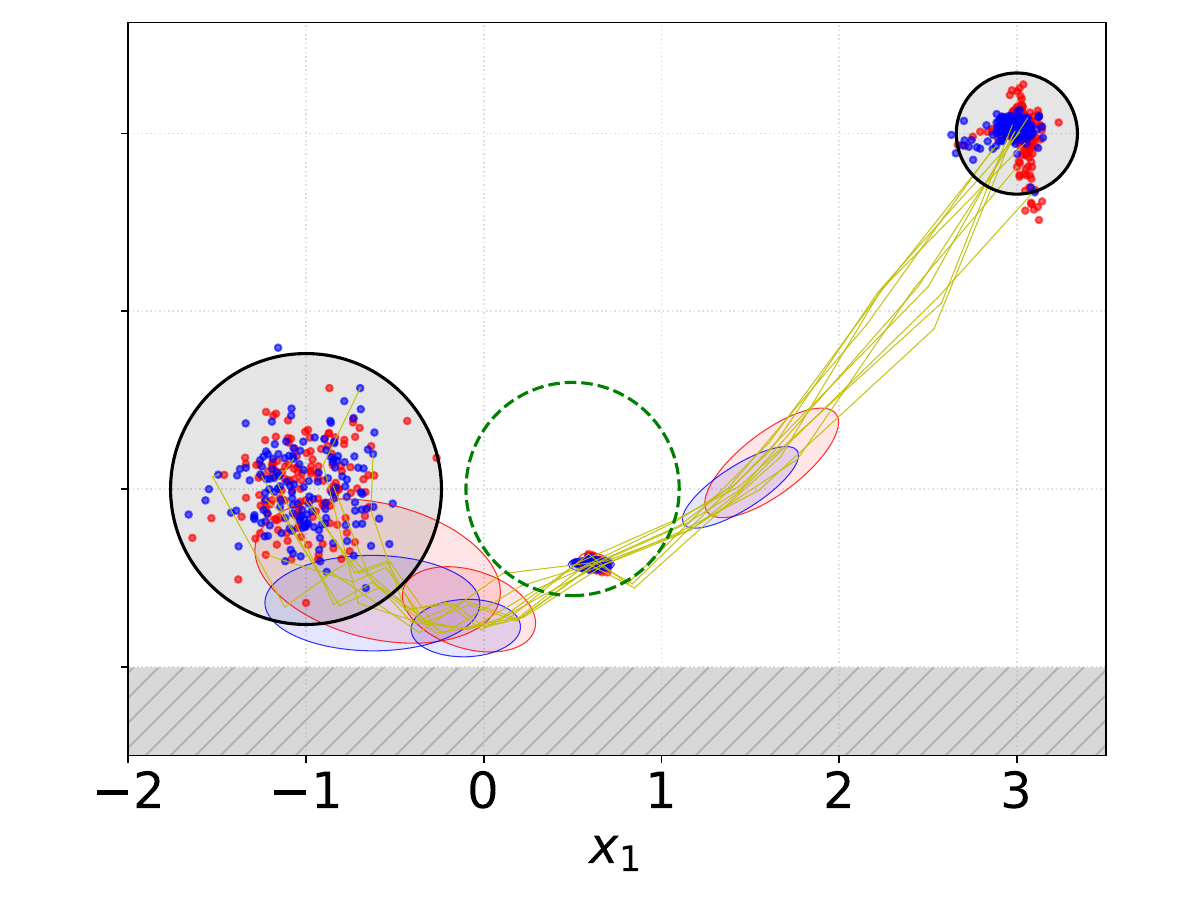}}\\[0.5em]
    \subfloat[Control profile]{%
        \includegraphics[width=0.48\textwidth]{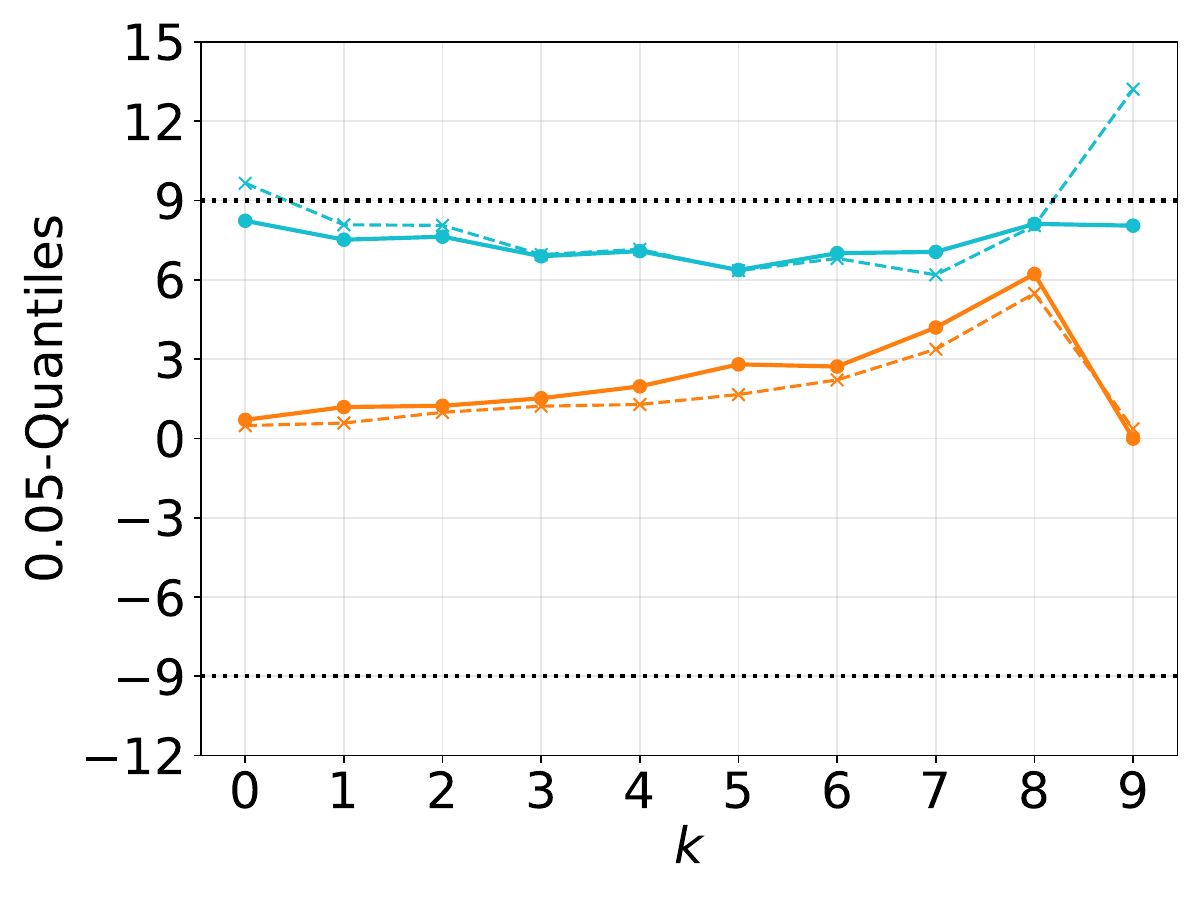}}
    \hfill
    \subfloat[Mode switch profile]{%
        \includegraphics[width=0.48\textwidth]{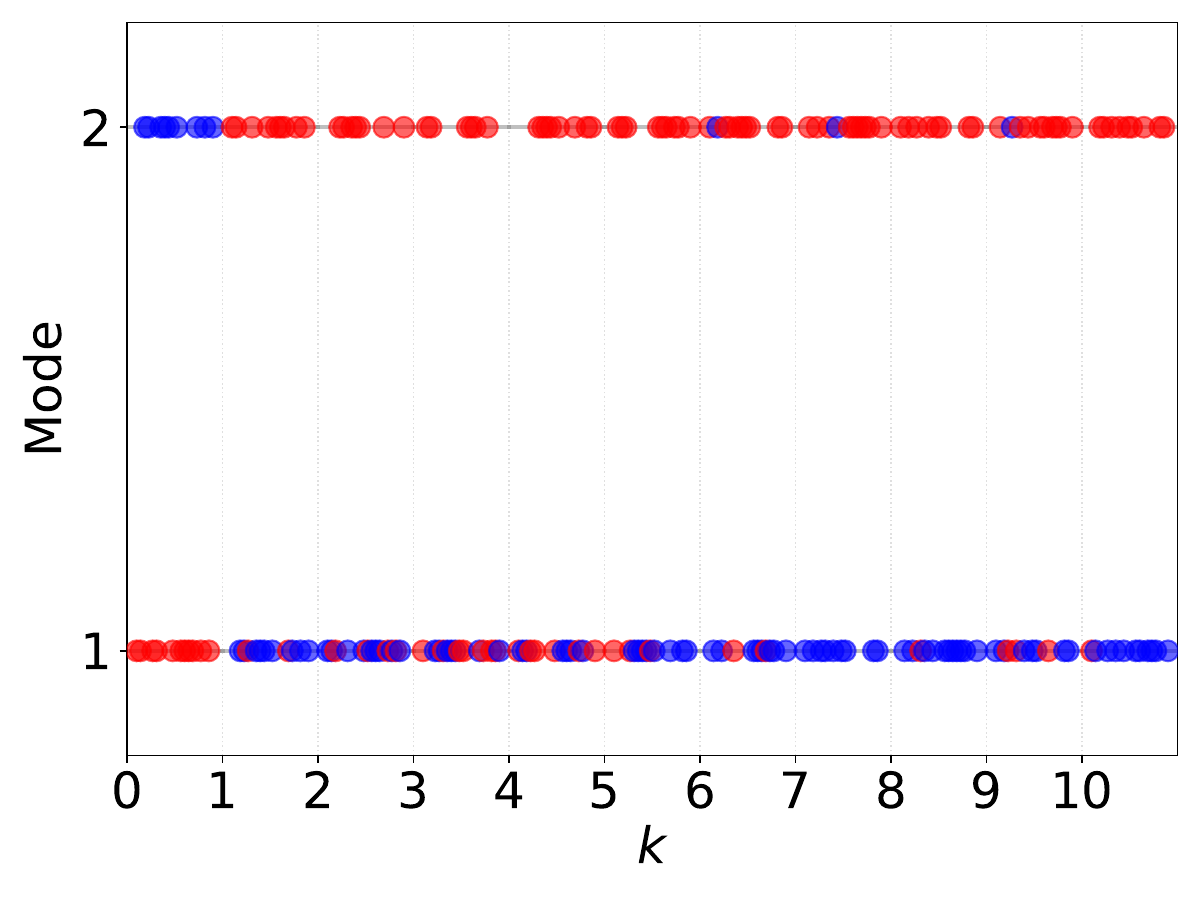}}

    \caption{Results for iteratively reduced $\gamma$-values. In this case, in addition to the ball constraint on $x_6$, we also enforced $x_{k,1}\ge 0$ and $|u_{k,1}|\le 9$ with high probability for each $k$. (a) and (b) show the sampled state trajectories for the unconstrained and constrained problems, respectively, at the last iteration. The cyan and orange lines in (c) are the upper and lower $0.05$-quantiles of the sampled $u_{k,1}$, respectively, where the solid lines correspond to the constrained case and the dashed lines correspond to the unconstrained case.
    (d) illustrates the mode switches of 20 sampled particles, where for mode 1, red dots denote particles that were in mode 1 in the previous step, while blue dots denote particles that were in mode 2 in the previous step; for mode 2, the color assignment is reversed.}
    \label{fig:reduced_gamma}
\end{figure*}

We have evaluated the performance of the proposed iterative chance-constrained framework in Section~\ref{sec:iterative_framework} under two numerical setups. The first highlights the benefit of iterative reference updates, while the second demonstrates the role of tolerance relaxation in recovering feasibility for more restrictive chance-constrained problems. In our experiments, all the reference values were initialized to zero.
% \fangji{Do we need to mention the hedging example here?}

\subsection{Chance-constrained MJLS problem}

The numerical examples use  $n_x=n_u=n_w=2$, $m_x=m_u=1$, $N_\indexset=2$, and $T=10$. For all $k=0,\ldots,T-1$, the system matrices are
\begin{align}
    &A_k(1)=0.9\,\identityMatrix_2,\quad A_k(2)=1.2\,\identityMatrix_2,
    \nonumber\\
    & B_k(1)=\begin{pmatrix}
        0.1 & 0\\
        0.2 & 0.1
    \end{pmatrix},\quad B_k(2)=\begin{pmatrix}
        0.05 & 0.1
        \\
        0 & 0.2
    \end{pmatrix},
    \nonumber\\
    &D_k(1)=D_k(2)=0.01\,\identityMatrix_2,\quad A_k^{(1)}(1)=A_k^{(1)}(2)=0.01\,\identityMatrix_2,
    \nonumber\\
    &B_k^{(1)}(1)=\begin{pmatrix}
        0.01 & 0\\
        0 & 0
    \end{pmatrix},\quad
    B_k^{(1)}(2)=\begin{pmatrix}
        0 & -0.01\\
        0 & 0.01
    \end{pmatrix}.
    \nonumber
\end{align}
The initial conditions are
\begin{align}
\mu_\mathrm{in}(1)=\mu_\mathrm{in}(2)=(-1,1)^\T,\quad \Sigma_\mathrm{in}(1)=\Sigma_\mathrm{in}(2)=0.05\,\identityMatrix_2,
    \nonumber
\end{align}
and the terminal conditions are
\begin{align}
    \mu_\mathrm{out}=(3, 3)^\T,\quad \Sigma_\mathrm{out}=0.01\,\identityMatrix_2.
    \nonumber
\end{align}
The mode transition matrix is given by
\[
(p_k^{ij})_{i,j}=
\begin{pmatrix}
    0.2 & 0.8\\
    0.9 & 0.1
\end{pmatrix},
\qquad k=0,\ldots,T-1,
\]
with initial condition $\rho_0(1)=\rho_0(2)=0.5$. 
% \sid{0.05 or 0.5?}\fangji{0.5, corrected.}
We use $Q_k(1)=Q_k(2)=\identityMatrix_2$ and $R_k(1)=R_k(2)=0.1\identityMatrix_2$ for $k=0,\ldots,T-1$.

\subsubsection{Fixed Tolerance with Iterative Reference Updates}
We first validated the benefit of iterative reference updates by considering the following ball constraint on the state:
\begin{align}
    \mathbb{P}\bigl(\| x_6-(0.5,\, 1)^{\top} \| \le 0.6\bigr)\ge 1-0.05.
    \label{eq:ball_constraints_exp}
\end{align}
We solved the problem for $M=10$ iterations, with $\gamma^{(m)}=1$ for all $m$.
The results are shown in Figure~\ref{fig:results_for_fixed_gamma}.
As the reference values are updated, the sampled states at time $k=6$ move closer to the boundary of the admissible ball, while the objective value decreases monotonically across iterations.
This indicates that the iterative procedure produces progressively tighter sufficient conditions and therefore reduces conservatism.

\subsubsection{Reducing Tolerances}

Next, we considered a more constrained setting as follows. 
In addition to the ball constraint~\eqref{eq:ball_constraints_exp}, we imposed the halfspace constraints on the state
\begin{align}
    \mathbb{P}(x_{k,1}\ge 0)\ge 1-0.05, \quad k=0,\ldots,T,
    \nonumber
\end{align}
and the halfspace constraints on the control as follows
\begin{align}
    \mathbb{P}(u_{k,1}\le 9)\ge 1-0.05\,\,\text{and}\,\,
    \mathbb{P}(u_{k,1}\ge -9)\ge 1-0.05
    \nonumber
\end{align}
for \(k=0,\ldots,T-1\). In this case, the ensuing SDP is not
feasible under the initial reference values because the resulting constraints~\eqref{eq:final_constraints_on_I}--\eqref{eq:final_constraints_on_U_Y} are overly conservative. 
To address this issue, we employed a decreasing sequence of tolerance-scaling factors, $\gamma^{(m)}=2^{6-m}$ for $m=1,\ldots,6$. Figure~\ref{fig:reduced_gamma} shows the results at the final iteration. Without enforcing these chance constraints, the sampled trajectories violate both the state and control constraints.
In contrast, the constrained solution respects the state exclusion region and keeps the sampled control quantiles within the prescribed bounds.

\subsection{Dynamic hedging with execution risk and exposure limits}

\begin{figure}[t]
    \centering
    \subfloat[Trajectory]{%
        \includegraphics[scale=0.35]{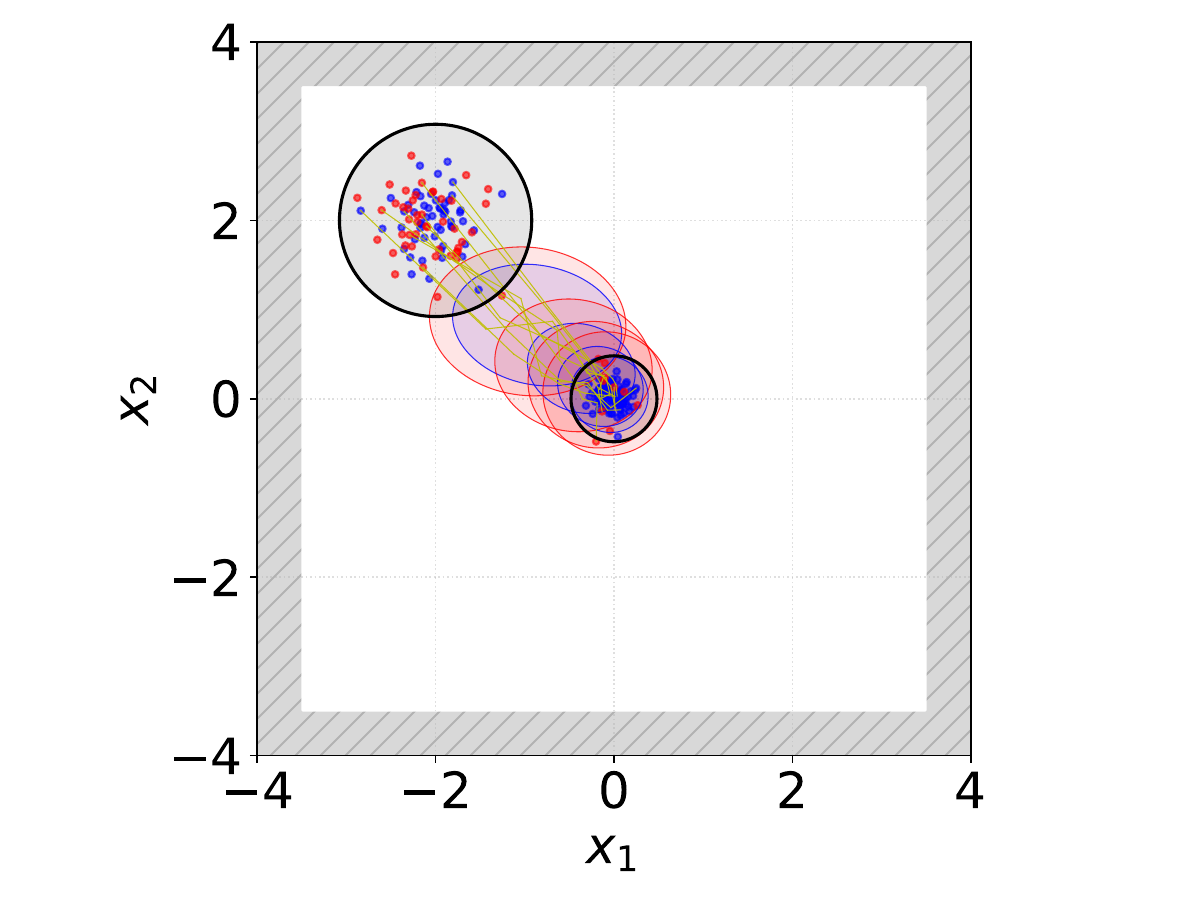}}
    \subfloat[Control profile]{%
        \includegraphics[scale=0.29]{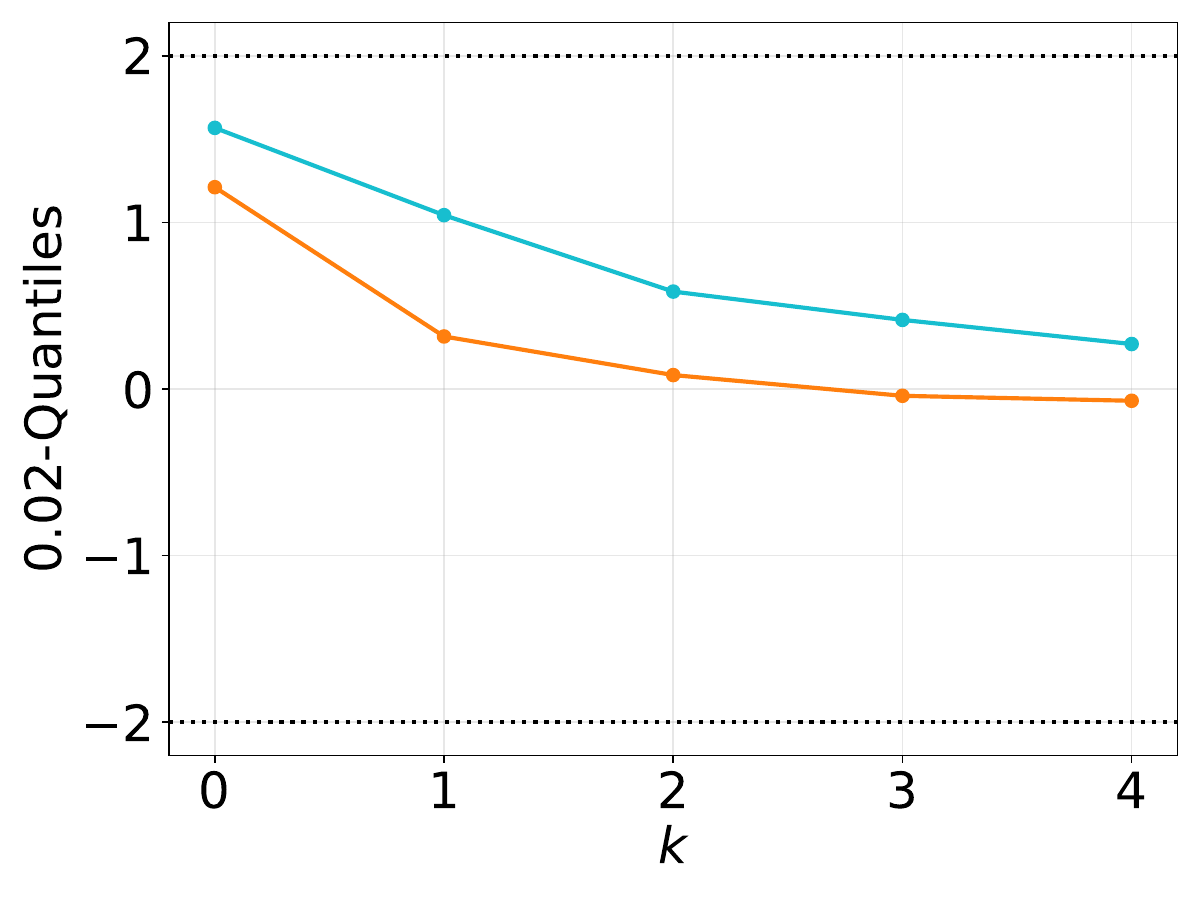}}
    \caption{Results for the hedging example.
    (a) shows sampled state trajectories together with the intermediate exposure limits.
    (b) shows the upper and lower $0.02$-quantiles of the sampled control inputs over the horizon, together with the prescribed control bounds.}
    \label{fig:hedging_results}
\end{figure}

We illustrate our framework on a dynamic hedging problem over a short horizon $T=5$. The state $x_k\in\mathbb{R}^2$ represents the tracking error in two portfolio sensitivities, where $x_{k,1}$ and $x_{k,2}$ denote the Delta and Vega errors, respectively~\cite{hull2016options}. 
The control $u_k\in\mathbb{R}^2$ represents trades in the underlying asset and in a liquid option. 
The market evolves according to a two-mode Markov chain, where Mode~1 corresponds to normal conditions and Mode~2 to distressed conditions, with a transition matrix
$$(p_k^{ij})_{i,j}=
\begin{pmatrix}
0.95 & 0.05\\
0.30 & 0.70
\end{pmatrix},\quad k=0,\ldots,T-1,$$
and initial distribution $\rho_0(1)=\rho_0(2)=0.5$.

The system parameters are chosen so that the option introduces slight Delta cross-coupling and both hedging instruments become less effective in the distressed regime.
Multiplicative noise captures state-dependent volatility ($m_x=2$) and control-dependent execution risk ($m_u=2$).
Specifically, for $k=0,\ldots,T-1$, we have
\begin{align}
    &A_k(1)=
\begin{pmatrix}
1.0 & 0.0\\
0.0 & 0.9
\end{pmatrix},\quad
B_k(1)=
\begin{pmatrix}
1.0 & 0.2\\
0.0 & 1.0
\end{pmatrix},
\nonumber\\
&A_k(2)=
\begin{pmatrix}
1.0 & 0.0\\
0.0 & 0.8
\end{pmatrix},\quad
B_k(2)=
\begin{pmatrix}
0.8 & 0.1\\
0.0 & 0.8
\end{pmatrix},
\nonumber\\
&D_k(1)=
\begin{pmatrix}
0.1 & 0.0\\
0.0 & 0.1
\end{pmatrix},\quad
D_k(2)=
\begin{pmatrix}
0.2 & 0.0\\
0.0 & 0.2
\end{pmatrix},
\nonumber\\
&A_k^{(1)}(1)=
\begin{pmatrix}
0.02 & 0.0\\
0.0 & 0.0
\end{pmatrix},\quad
A_k^{(2)}(1)=
\begin{pmatrix}
0.0 & 0.0\\
0.0 & 0.02
\end{pmatrix},
\nonumber\\
&B_k^{(1)}(1)=
\begin{pmatrix}
0.02 & 0.0\\
0.0 & 0.0
\end{pmatrix},\quad
B_k^{(2)}(1)=
\begin{pmatrix}
0.0 & 0.01\\
0.0 & 0.02
\end{pmatrix},
\nonumber\\
&A_k^{(1)}(2)=
\begin{pmatrix}
0.03 & 0.0\\
0.0 & 0.0
\end{pmatrix},\quad
A_k^{(2)}(2)=
\begin{pmatrix}
0.0 & 0.0\\
0.0 & 0.03
\end{pmatrix},
\nonumber\\
&B_k^{(1)}(2)=
\begin{pmatrix}
0.03 & 0.0\\
0.0 & 0.0
\end{pmatrix},\quad
B_k^{(2)}(2)=
\begin{pmatrix}
0.0 & 0.01\\
0.0 & 0.03
\end{pmatrix}.
\nonumber
\end{align}
The initial condition is
\[
\mu_{\mathrm{in}}(1)=\mu_{\mathrm{in}}(2)=(-2,\,2)^\top,\,\,
\Sigma_{\mathrm{in}}(1)=\Sigma_{\mathrm{in}}(2)=0.1I_2,
\]
and the objective is to drive the expected exposure to zero while enforcing
\[
\mu_{\mathrm{out}}=(0,0)^\top,\quad \Sigma_T\preceq 0.02I_2.
\]
We employed $Q_k(1)=Q_k(2)=0.1\identityMatrix_2$ and $R_k(1)=R_k(2)=0.1\identityMatrix_2$ for $k=0,\ldots,T-1$. To limit intermediate risk, for all $k=1,\dots,T-1$ we imposed
\begin{align}
    &\mathbb{P}(x_{k,1}\le 3.5)\ge 1-\varepsilon,\quad \mathbb{P}(x_{k,1}\ge -3.5)\ge 1-\varepsilon,
    \nonumber\\
    &\mathbb{P}(x_{k,2}\le 3.5)\ge 1-\varepsilon, \quad \mathbb{P}(x_{k,2}\ge -3.5)\ge 1-\varepsilon,
    \nonumber\\
    &\mathbb{P}(u_{k,1}\le 2.0)\ge 1-\varepsilon,\quad \mathbb{P}(u_{k,1}\ge -2.0)\ge 1-\varepsilon,
    \nonumber\\
    &\mathbb{P}(u_{k,2}\le 2.0)\ge 1-\varepsilon,\quad \mathbb{P}(u_{k,2}\ge -2.0)\ge 1-\varepsilon,
    \nonumber
\end{align}
with $\varepsilon=0.02$.
We employed a decreasing sequence of tolerance-scaling factors, $\gamma^{(m)}=2^{7-m}$ for $m=1,.\ldots,7$.
The resulting trajectories and control quantiles are shown in Figure~\ref{fig:hedging_results}. This example illustrates that the proposed method can hedge regime-switching Delta/Vega exposures while accounting for both volatility-scaled uncertainty and execution risk, while simultaneously enforcing probabilistic bounds on intermediate states and controls.

%% file: sections/Conclusion.tex
\section{Conclusions}
This article establishes a finite-horizon covariance steering framework for discrete-time Markov jump linear systems with both state- and control-dependent multiplicative noise. 
We showed that, in this setting, one may, without loss of generality, restrict attention to a class of control laws consisting of mode-dependent linear feedback, feedforward terms, and independent random components.
We showed that, unlike the case without multiplicative noise, a purely affine state-feedback law does not in general suffice. 
Subsequently, we introduced a lifted second-moment formulation, derived semidefinite programming reformulations for the unconstrained and chance-constrained problems, and established an iterative reference-update scheme to reduce conservatism. 

Several directions remain open for future work. On the theoretical side, it would be of interest to further sharpen the chance-constrained approximations and to better understand the conservatism introduced by the proposed surrogates and iterative updates. On the computational side, a natural direction is to develop scalable algorithms for longer horizons and larger mode sets. Extensions to partially observed MJLSs, output-feedback formulations, and more general distributional or risk-sensitive terminal specifications, along with their applications to practical scenarios, also appear to be promising directions for future investigation.

%% file: sections/appendix.tex
\section{Appendix}
In this appendix, we gather the proofs of the main results along with several supporting calculations used throughout this article. 

\subsection{Auxiliary lemmas}\label{appen:aux:lemmas}
We begin with a lemma whose proof is an immediate consequence of~\cite[Proposition 1]{liu2024reachability}.
\begin{lemma}
    Let $A\in\mathbb{S}_+^{n}$, $B\in\mathbb{S}^m$, and $C\in\mathbb{R}^{m\times n}$.
    If $\begin{pmatrix}
        A & C^\T\\
        C & B
    \end{pmatrix}\succeq 0$, then there exist $P\in\mathbb{R}^{m\times n}$ and $S\in \mathbb{S}_+^{m}$ such that $C=PA$ and $B=PAP^\T+S$.
    \label{lemma:for_lossless_proof}
\end{lemma}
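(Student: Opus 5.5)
The statement to prove is Lemma~\ref{lemma:for_lossless_proof}: for a positive semidefinite block matrix, the off-diagonal block factors through $A$, and the Schur-type complement is positive semidefinite. The plan is to construct $P$ explicitly from the Moore--Penrose pseudoinverse $A^\dagger$ of $A$, and then read off $S$ as a generalized Schur complement.

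\textbf{Step 1: show that $C$ factors through $A$, i.e.\ $\ker A\subseteq\ker C$.} Take $z\in\ker A$, an arbitrary $y\in\mathbb{R}^m$, and $t\in\mathbb{R}$. Evaluating the quadratic form of the block matrix at $(tz,\,y)$ gives
\[
0\le t^2 z^\T A z+2t\,y^\T C z+y^\T B y=2t\,y^\T Cz+y^\T By ,
\]
since $z^\T Az=0$. This expression is affine in $t$ and must stay nonnegative for every $t\in\mathbb{R}$, so its slope vanishes: $y^\T Cz=0$. As $y$ is arbitrary, $Cz=0$. Hence the rows of $C$ lie in $(\ker A)^\perp=\operatorname{range}(A)$, using that $A$ is symmetric. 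Because $AA^\dagger$ is the orthogonal projector onto $\operatorname{range}(A)$, this gives $C=CA^\dagger A$.

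\textbf{Step 2: define $P$ and $S$.} Set $P\Let CA^\dagger$, so that $PA=CA^\dagger A=C$ by Step~1. Define $S\Let B-PAP^\T=B-CA^\dagger C^\T$. This $S$ is symmetric because $A^\dagger$ is symmetric, and the identity $B=PAP^\T+S$ holds by construction.

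\textbf{Step 3: show $S\succeq 0$.} Fix $y\in\mathbb{R}^m$ and test the block matrix with the vector $(-P^\T y,\,y)$. Expanding the quadratic form gives
\[
y^\T PAP^\T y-2y^\T CP^\T y+y^\T By .
\]
Substituting $C=PA$ from Step~2, the middle term equals $-2y^\T PAP^\T y$. The whole expression therefore reduces to $y^\T(B-PAP^\T)y=y^\T Sy$, which is nonnegative by the positive semidefiniteness of the block matrix. This completes the argument.

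The only step with real content is Step~1, the range inclusion. Without it one cannot write $C=PA$ when $A$ is singular, and the usual Schur complement argument requires $A$ to be invertible. The perturbation-in-$t$ argument above handles the singular case directly.
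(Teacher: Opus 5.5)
Your proof is correct and complete. The paper does not prove this lemma itself. It only asserts that the lemma is an immediate consequence of \cite[Proposition 1]{liu2024reachability}, so your argument is a self-contained replacement for that citation rather than a competing route.

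What you give is the standard generalized Schur complement argument: the inclusion $\ker A\subseteq\ker C$, the choice $P=CA^\dagger$, and the test vector $(-P^\T y,\,y)$. Your affine-in-$t$ test is exactly what handles singular $A$.

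One small wording point: the identity $C=CA^\dagger A$ uses that $A^\dagger A$ is the orthogonal projector onto $\operatorname{range}(A)$. For symmetric $A$ this coincides with $AA^\dagger$, so nothing breaks, but it is worth saying.

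Beyond the citation, your route buys explicitness. In Lemma~\ref{lemma:guarantee_lossless} it gives concrete formulas for recovering the controller from an SDP solution: $K_k(j)=\bar U_k(j)\bar I_k(j)^\dagger$ and $V_k(j)=\bar Y_k(j)-\bar U_k(j)\bar I_k(j)^\dagger\bar U_k(j)^\T$.

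It also shows that $S$ does not depend on which $P$ is used. Whenever $PA=C$, one has $PAP^\T=(PA)A^\dagger(PA)^\T=CA^\dagger C^\T$. So the covariance $V_k(j)$ of the injected randomization is uniquely determined even when $\bar I_k(j)$ is singular and the feedback gain is not.
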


The next lemma guarantees that the relaxations in \S\ref{subsec:uncon:case}--\ref{subsec:constrained:case} are lossless.

\begin{lemma}\label{lemma:guarantee_lossless}
    Let $\bar I(\indexset) \Let \aset[]{\bar I_k(j)}_{k=0}^T$, $\bar U(\indexset) \Let \{\bar U_k(j)\}_{k=0}^{T-1}$ and $\bar Y(\indexset)\Let \{\bar Y_k(j)\}_{k=0}^{T-1}$ satisfy
    \begin{align}
        &\bar I_k(j)\succeq 0,\quad \bar Y_k(j)\succeq 0,\quad \begin{pmatrix}
        \bar I_k(j) &  \bar U_k(j)^\T\\
         \bar U_k(j) & \bar Y_k(j)
    \end{pmatrix} \succeq 0,
    \nonumber\\
    &\bar I_{k+1}(j)=T_{k,j}(\bar I_k(\indexset),\bar U_k(\indexset),\bar Y_k(\indexset)),
    \nonumber
    \end{align}
    for all $k=0,\ldots,T-1$ and $j\in\indexset$.
    Then, for any initial distribution of $\tilde x_0$ with $\tilde I_0(j)=\bar I_0(j)$, there exists an admissible control $u=\{u_k\}_{k=0}^{T-1}\in\mathcal{U}$ such that the corresponding state trajectory $\{\tilde x_k\}_{k=0}^T$, generated by~\ref{eq:lifted_system},
    satisfies, for every $j\in\indexset$,
    \begin{align}
        \tilde I_k(j)=\bar I_k(j), \quad k=0,\ldots,T,
        \nonumber
    \end{align}
    and
    \begin{align}
        \tilde U_k(j)=\bar U_k(j),\; Y_k(j)=\bar Y_k(j),\quad k=0,\ldots,T-1.
        \nonumber
    \end{align}
\end{lemma}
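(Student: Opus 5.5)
The plan is to realize the prescribed moments with a control of the form in Corollary~\ref{cor:lifted_admissible_control_form}, namely $u_k=\tilde K_k(q_k)\tilde x_k+\nu_k$. The gains and noise covariances are built from the data by Lemma~\ref{lemma:for_lossless_proof}, and we argue by induction on $k$. For each $k$ and $j$, apply Lemma~\ref{lemma:for_lossless_proof} with $A=\bar I_k(j)$, $C=\bar U_k(j)$ and $B=\bar Y_k(j)$. This yields $P_k(j)\in\R[n_u\times(n_x+1)]$ and $S_k(j)\in\mathbb{S}_+^{n_u}$ such that $\bar U_k(j)=P_k(j)\bar I_k(j)$ and $\bar Y_k(j)=P_k(j)\bar I_k(j)P_k(j)^\T+S_k(j)$. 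Set $\tilde K_k(j)\Let P_k(j)$ and $V_k(j)\Let S_k(j)$. Then let $\nu_k$ be a random vector that, conditional on $q_k=j$, has zero mean and covariance $S_k(j)$ and is independent of $\tilde x_k$ and of $\xi_k,\eta_k,w_k$. For example, take $\nu_k=S_k(q_k)^{1/2}\zeta_k$, where $\zeta_k$ is an auxiliary standard vector independent of everything else. We enlarge the filtration so that $\zeta_k$ is $\statefiltration_k$-measurable, which is the convention already implicit in Proposition~\ref{prop:adimissible_control_form}. Then $u_k$ is $\controlfiltration_k$-measurable. It is also square integrable, since $\tilde x_k$ has finite second moments inductively.

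Induction hypothesis: $\tilde I_k(j)=\bar I_k(j)$ for all $j$. The base case $k=0$ is assumed. Given the hypothesis at step $k$, the choice of control gives
\[
\mathbb{E}[u_k\tilde x_k^\T\mid q_k=j]=P_k(j)\bar I_k(j)=\bar U_k(j),
\]
because the cross term vanishes: $\nu_k$ is conditionally independent of $\tilde x_k$ and has zero mean. Similarly
\[
\mathbb{E}[u_ku_k^\T\mid q_k=j]=P_k(j)\bar I_k(j)P_k(j)^\T+S_k(j)=\bar Y_k(j).
\]
These identities give $\tilde U_k(j)=\bar U_k(j)$ and $Y_k(j)=\bar Y_k(j)$.

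Next, compute $\tilde I_{k+1}(j)$ from \eqref{eq:lifted_system}. First condition on $q_{k+1}=j$. By the conditional independence in \ref{prob:data:1}, together with the backward probabilities $s_k^{ij}$, this reduces to conditioning on $q_k=i$ with weights $s_k^{ij}$. Then expand $\tilde x_{k+1}\tilde x_{k+1}^\T$. All cross terms involving $\xi_k^{(r)}$, $\eta_k^{(s)}$ or $w_k$ vanish by the zero-mean and mutual independence assumptions \ref{prob:data:2}--\ref{prob:data:3}. The squared noise terms contribute $\tilde A^{(r)}_k\tilde I_k\tilde A^{(r)\T}_k$, $\tilde B^{(s)}_k Y_k\tilde B^{(s)\T}_k$ and $\tilde D_k\tilde D_k^\T$. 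The deterministic part contributes the $\tilde A,\tilde B$ terms written with $\tilde U_k$ and $Y_k$. The result is precisely $T_{k,j}(\tilde I_k(\indexset),\tilde U_k(\indexset),Y_k(\indexset))=T_{k,j}(\bar I_k(\indexset),\bar U_k(\indexset),\bar Y_k(\indexset))=\bar I_{k+1}(j)$, which closes the induction.

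The main obstacle is the conditioning step that passes from $q_{k+1}=j$ back to $q_k=i$. One must check that the conditional law of $(\tilde x_k,u_k,\xi_k,\eta_k,w_k)$, given $q_k=i$ and $q_{k+1}=j$, equals its law given only $q_k=i$. In other words, the mode transition must be independent of the current-step quantities given $q_k$; this is exactly what \ref{prob:data:1} is meant to encode, applied at step $k+1$. The new randomization $\nu_k$ must be built so that it also respects this. Choosing $\zeta_k$ independent of $q_{k+1}$ guarantees it, and I would state this choice explicitly. A secondary point is that Lemma~\ref{lemma:for_lossless_proof} needs $\bar I_k(j)\succeq0$ and the block PSD condition, both of which are hypotheses; $\bar Y_k(j)$ is symmetric.
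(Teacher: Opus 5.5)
Your proposal is correct and follows essentially the same route as the paper's proof. Both arguments factor each block matrix via Lemma~\ref{lemma:for_lossless_proof} as $\bar U_k(j)=K_k(j)\bar I_k(j)$ and $\bar Y_k(j)=K_k(j)\bar I_k(j)K_k(j)^\T+V_k(j)$, apply $u_k=K_k(q_k)\tilde x_k+\nu_k$, and check inductively that the moments propagate through $T_{k,j}$; you additionally spell out the conditional zero-mean requirement on $\nu_k$, which is genuinely needed for $\tilde U_k(j)=\bar U_k(j)$ and is left implicit in the paper's proof.
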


\begin{proof}
    From Lemma~\ref{lemma:for_lossless_proof}, for each $k=0,\ldots,T-1$ and $j\in\indexset$, there exists $K_k(j)\in\R[n_u\times (n_x+1)]$ and $V_k(j)\in\mathbb{S}_+^{n_u}$ such that $\bar U_k(j)=K_k(j)\bar I_k(j)$ and $\bar Y_k(j)=K_k(j)\bar I_k(j)K_k(j)^\T+V_k(j)$.
    For $k=0,\ldots,T-1$, we sequentially construct the $\mathcal{G}_k$-measurable control
    \begin{align}
        u_k=K_k(q_k)\tilde x_k+\nu_k,
        \nonumber
    \end{align}
    where $\nu_k$ is any $\mathcal{G}_k$-measurable random vector, independent of $\tilde x_k$ conditional on $j\in\indexset$, with $\mathbb{E}[\nu_k\nu_k^\T\mid q_k=j]=V_k(j)$.
    It is then straightforward to verify that, for $j\in\indexset$, if $\tilde I_k(j)=\bar I_k(j)$, then $\tilde U_k(j)=\bar U_k(j)$ and $Y_k(j)=\bar Y_k(j)$, and the $\tilde x_{k+1}$ obtained from~\ref{eq:lifted_system} satisfies $\tilde I_{k+1}(j)=\bar I_{k+1}(j)$.
\end{proof}

% \blue{Sid: Yes, looks good.}

\begin{lemma}
\label{lemma:ball_reference}
% \sid{Here also change the \(I\) to something else.}\fangji{Changed.}
Let $X\in\mathbb R^n$ with $\mu \Let \mathbb E[X]$ and $\Theta \Let \mathbb E[XX^\top]$. Suppose that $x_0\in\mathbb R^n$ and $t>0$. Then for any $x\in\mathbb R^n$ such that
$\|x-x_0\|<t$, the minimum value of
\begin{equation}
\frac{\mathbb E\big[\|X-x\|^2\big]}{(t-\|x-x_0\|)^2},
\label{eq:ball_reference_quantity}
\end{equation}
is attained at
\begin{align}
x^\star=
\begin{cases}
x_0+
\left(\frac{t\|o\|-c}{t-\|o\|}\right)
\frac{o}{\|o\|},
& \textrm{if } \mathcal{T} \textrm{ holds}
\\[6pt]
x_0, & \textrm{otherwise,}
\end{cases}
\label{eq:optimal_reference}
\end{align}
where
$o\Let\mu-x_0$, $c \Let \mathbb E[\|X-x_0\|^2]
= \operatorname{tr}(\Theta)-2\mu^\top x_0+\|x_0\|^2$, and $\mathcal{T}\Let \left\{\mu\neq x_0,\ \ t>\|o\|,\ \ c<t\|o\|\right\}$.
\end{lemma}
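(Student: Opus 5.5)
The plan is to reduce the minimization of \eqref{eq:ball_reference_quantity} to a one-dimensional problem in the distance \(\|x-x_0\|\), and then study the sign of the derivative of the resulting scalar function.

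\textbf{Step 1 (reduction to one variable).} Write \(x=x_0+d\) with \(\|d\|<t\). Expanding gives
\[
\mathbb E\|X-x\|^2=c-2o^\top d+\|d\|^2 .
\]
Fix \(\rho\Let\|d\|\in[0,t)\). The denominator \((t-\rho)^2\) then does not depend on the direction of \(d\). By Cauchy--Schwarz, the numerator is minimized by taking \(d=\rho\, o/\|o\|\) when \(o\neq0\); any direction works when \(o=0\). With \(a\Let\|o\|\), the problem therefore reduces to minimizing
\[
g(\rho)=\frac{c-2a\rho+\rho^2}{(t-\rho)^2},\qquad \rho\in[0,t).
\]
We will also use the Jensen bound
\[
c=\mathbb E\|X-x_0\|^2\ge\|\mathbb E X-x_0\|^2=a^2 .
\]
This bound is what controls all the boundary cases.

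\textbf{Step 2 (sign of the derivative).} A direct computation gives
\[
g'(\rho)=\frac{2\,h(\rho)}{(t-\rho)^3},\qquad h(\rho)\Let\rho(t-a)-at+c .
\]
So on \([0,t)\) the sign of \(g'\) equals the sign of \(h\), and \(h\) is affine in \(\rho\).

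\textbf{Step 3 (case analysis).} We split according to \(a\), \(t\) and \(c\).
\begin{itemize}
\item If \(a=0\), i.e.\ \(\mu=x_0\), then \(h(\rho)=c+\rho t\ge0\). Hence \(g\) is nondecreasing and the minimum is at \(\rho=0\), that is, \(x^\star=x_0\).
\item If \(a\ge t\), then for \(\rho<t\le a\) we have
\[
h(\rho)\ge a^2-at-\rho(a-t)=(a-t)(a-\rho)\ge0 .
\]
Again the minimum is at \(\rho=0\).
\item If \(0<a<t\), then \(h\) is strictly increasing and vanishes at \(\rho^\star=(at-c)/(t-a)\). If \(c\ge at\), then \(h\ge0\) on \([0,t)\) and the minimizer is \(\rho=0\). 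If \(c<at\), then \(\rho^\star>0\). Moreover \(\rho^\star<t\), because \(at-c\le at-a^2=a(t-a)<t(t-a)\). In this case \(g\) decreases on \([0,\rho^\star]\) and increases on \([\rho^\star,t)\), so \(\rho^\star\) is the global minimizer.
\end{itemize}
The last sub-case is exactly the condition \(\mathcal T\), and substituting \(d=\rho^\star o/\|o\|\) recovers \eqref{eq:optimal_reference}.

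The main obstacle is showing that the stationary point lies inside the admissible interval \([0,t)\). This is also what rules out a spurious decrease of \(g\) toward the boundary when \(a\ge t\). Both facts follow from the Jensen bound \(c\ge a^2\), which I expect to be the key ingredient. The remaining computations are routine.
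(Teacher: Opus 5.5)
Your proposal is correct and follows the same route as the paper's proof: shift by $x_0$, use Cauchy--Schwarz to align $x-x_0$ with $o$, then minimize the scalar function $\phi(r)=(c-2\|o\|r+r^2)/(t-r)^2$ over $[0,t)$. The paper only asserts the outcome of that one-dimensional minimization. Your sign analysis of $h(\rho)$, together with the bound $c\ge\|o\|^2$, supplies the missing justification, including the check that the stationary point lies in $(0,t)$.
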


\begin{proof}
Let $y:=x-x_0$, $Z:=X-x_0$. 
Then, $\mathbb E[Z]=o$, $\mathbb E[\|Z\|^2]=c$, $\mathbb E[\|Z-y\|^2]
=c-2\mu_0^\top y+\|y\|^2$, and
\begin{align}
    \frac{\mathbb E\big[\|X-x\|^2\big]}{(t-\|x-x_0\|)^2}
    =
    \frac{\mathbb E\big[\|Z-y\|^2\big]}{(t-\|y\|)^2}=\frac{c-2c^\top y+\|y\|^2}{(t-\|y\|)^2}. \nn
\end{align}
Fix $r=\|y\|$, by the Cauchy-Schwarz Inequality \cite[Theorem 1.35]{ref:Rud-Analysis} we have $o^\T  y\le \|o\|\|y\|$, and the equality is attained when $o$ and $y$ are parallel.
Therefore, the minimum of~\ref{eq:ball_reference_quantity} is given by
\begin{align}
    \frac{c-2\|o\|r+r^2}{(t-r)^2}\eqqcolon \phi(r). \nn
\end{align}
Optimizing $r \mapsto \phi(r)$ over $0\le r <t$ yields the optimal solution 
\begin{align}
    r^\star=
    \begin{cases}
    \left(\frac{t\|o\|-c}{t-\|o\|}\right)
    \frac{o}{\|o\|}\quad &\text{if conditions in } \mathcal{T} \text{ holds},
    \\[6pt]
    0 &\text{otherwise}. \nn
    \end{cases}
\end{align}
Taking $y$ to be parallel to $o$ gives the optimal solution~\ref{eq:optimal_reference}. 
\end{proof}

To avoid ambiguity in the notation used in the proofs of Proposition~\ref{prop:adimissible_control_form} and Theorem~\ref{thm:optiaml_control_form} below, we introduce the following notation associated with Problem~\ref{problem:original}. 
Let \(u\in\mathcal U\) and let \(j\in\indexset\). 
For \(k=0,\ldots,T\), let \(x_k^u\) denote the state at time \(k\) induced by the control sequence \(u\), and let \(\mu_k^u(j)\) and \(\Sigma_k^u(j)\) denote the corresponding conditional mean and conditional covariance of \(x_k^u\) given \(q_k=j\), respectively.
We write \(J^u\) for the value of the cost functional \(J\) under the control \(u\). In addition, for \(k=0,\ldots,T-1\), define $v_k^u(j)\Let\mathbb{E}[u_k\mid q_k=j]$,
\(\Sigma_k^{uu}(j)\Let \mathbb E\!\big[(u_k-v_k^u(j))(u_k-v_k^u(j))^\T \mid q_k=j\big],
\) and \(\Sigma_k^{ux}(j)\Let \mathbb E\!\big[(u_k-v_k^u(j))(x_k^u-\mu_k^u(j))^\T \mid q_k=j\big].\)

% To avoid ambiguity of notations, in the proof of Proposition~\ref{prop:adimissible_control_form} and Theorem~\ref{thm:optiaml_control_form}, we define the below notations~\ref{problem:original}:
% Let $u\in\mathcal{U}$ and let $j\in\indexset$.
% For $k=0,\ldots,T$, denote $x_k^u$, $\mu_k^u(j)$, and $\Sigma_k^u(j)$ as the corresponding values of $x_k$, $\mu_k(j)$, and $\Sigma_k(j)$ under control $u$.
% For $k=0,\ldots,T-1$, denote $v_k^u(j)$ as the corresponding value of $v_k(j)$ under control $u$.
% We denote $J^u$ as the corresponding value of $J$ under control $u$.
% Besides, for $k=0,\ldots,T-1$, we also define $\Sigma^{uu}_k(j)\Let\mathbb{E}[(u_k-v^u_k(j))(u_k-v^u_k(j))^\T\mid q_{k}=j]$ and $\Sigma_k^{ux}(j)\Let\mathbb{E}[(u_k- v^u_k(j))(x^u_k- \mu^u_k(j))^\T \mid q_{k}=j]$.

\subsection{Proof of Proposition~\ref{prop:adimissible_control_form}}
\label{appendix:proof_of_admissible_control_form}

% \fangji{Need to clean up the notations, define $\cdot^u$ for variables.}
% \begin{proof}[Proof of Proposition~\ref{prop:adimissible_control_form}]
% \fangji{There is a more tricky proof: assume $\hat u$ steers each mode to some mean \& covariance before jump, then you can apply linear control law to the same mean \& covariance. And the mean \& covariance after jump is determined by those before jump.}

% For a variable $a$ associated with~\ref{eq:plain_mjls_system} and an admissible control $u$, we use $a^u$ to denote that it is under control $u$.
% Besides, for $k=0,\ldots,T-1$ and $j\in\indexset$, we define $v^u_k(j):=\mathbb{E}[u_k\mid q_k=j]$, $\Sigma^{uu}_k(j):=\mathbb{E}[(u_k-v^u_k(j))(u_k-v^u_k(j))^\T\mid q_{k}=j]$, and $\Sigma_k^{ux}(j)=\mathbb{E}[(u_k- v^u_k(j))(x^u_k- \mu^u_k(j))^\T \mid q_{k}=j]$.
% % Let $\hat u$ be any admissible control.

Let $u\in\mathcal{U}$ be an admissible control.
We first derive the propagation formulas of $\mu^u(\indexset)$ and $\Sigma^u(\indexset)$.
For $k=0,\ldots,T-1$ and $j\in\indexset$, we have
\begin{align}
    &\mathbb{E}[x_{k+1}^{u}\mid q_{k+1}=j]
    =\sum_{i\in\indexset}\mathbb{E}[x_{k+1}^{ u}\indicator_{\{q_k=i\}}\mid q_{k+1}=j]
    \nonumber\\
    &=\sum_{i\in\indexset}s_k^{ij}\mathbb{E}[x_{k+1}^{u}\mid q_k=i,\,q_{k+1}=j]
    =\sum_{i\in\indexset}s_k^{ij}\mathbb{E}\Big[A_k(i) x_k^{u}+B_k(i) u_k+\sum_{r=1}^{m_x} A_k^{(r)}(i) x_k^{ u}\xi_k^{(r)}
    \nonumber\\
    &\quad+ \sum_{s=1}^{m_u} B_k^{(s)}(i)  u_k \eta_k^{(s)} +D_k(i)w_k\mid q_k=i\Big]
    =\sum_{i\in\indexset}s_k^{ij}\big(A_k(i) \mu^{u}_k(i)+B_k(i) v^{u}_k(i)\big)
    \nonumber\\ &\eqqcolon F_{k,j}\left(\mu_k^u(\indexset),v_k^u(\indexset)\right),
    \nonumber
\end{align}
and
\begin{align}
    &\mathbb{E}[x_{k+1}^{ u}x_{k+1}^{ u\T}\mid q_{k+1}=j]-\mu_{k+1}^{ u}(j)\mu_{k+1}^{ u}(j)^\T
    \nonumber\\
    &=\sum_{i\in\indexset}\mathbb{E}[x_{k+1}^{ u}x_{k+1}^{ u\T}\indicator_{\{q_k=i\}}\mid q_{k+1}=j]-\mu_{k+1}^{ u}(j)\mu_{k+1}^{ u}(j)^\T
    \nonumber\\
    &=\sum_{i\in\indexset}s_k^{ij}\mathbb{E}[x_{k+1}^{ u}x_{k+1}^{ u\T}\mid q_k=i,\,q_{k+1}=j]
     - \mu_{k+1}^{ u}(j) \mu_{k+1}^{ u}(j)^\T
    \nonumber\\
    &=\sum_{i\in\indexset}s_k^{ij}\mathbb{E}\Big[\Big(A_k(i) x_k^{ u}+B_k(i) u_k+\sum_{r=1}^{m_x} A_k^{(r)}(i)x_k^{ u} \xi_k^{(r)} 
    \nonumber\\
    &\quad+ \sum_{s=1}^{m_u} B_k^{(s)}(i)  u_k \eta_k^{(s)}+D_k(i)w_k\Big)\Big(A_k(i) x_k^{ u}+B_k(i) u_k+\sum_{r=1}^{m_x} A_k^{(r)}(i) x_k^{ u}\xi_k^{(r)}
    \nonumber\\
    &\quad  + \sum_{s=1}^{m_u} B_k^{(s)}(i)  u_k \eta_k^{(s)}+D_k(i) w_k\Big)^\T
    \mid q_k=i\Big]- \mu^{ u}_{k+1}(j) \mu^{ u}_{k+1}(j)^\T 
    \nonumber\\
    &=\sum_{i\in\indexset}s_k^{ij}\Big(A_k(i) \big(\Sigma^{ u}_k(i)+\mu_k^{ u}(i)\mu_k^{ u}(i)^\T\big)A_k(i)^\T 
     +A_k(i)\big(\Sigma_k^{ ux}(i)^\T+\mu_k^{ u}(i)v_k^{ u}(i)^\T\big)B_k(i)^\T 
    \nonumber\\
    &\quad +B_k(i)\big(\Sigma_k^{ ux}(i)+v_k^{ u}(i)\mu_k^{ u}(i)^\T\big)A_k(i)^\T 
     +B_k(i)\big(\Sigma_k^{ u u}(i)+v_k^{ u}(i)v_k^{ u}(i)^\T\big) B_k(i)^\T
    \nonumber\\
    &\quad + \sum_{r=1}^{m_x} A_k^{(r)}(i) \big( \Sigma^{ u}_k(i)  +  \mu^{ u}_k(i) \mu^{ u}_k(i)^\T\big) A_k^{(r)}(i)^\T
    + \sum_{s=1}^{m_u} B_k^{(s)}(i) \big( \Sigma_k^{ u u}(i) +  v^{ u}_k(i) v^{ u}_k(i)^\T\big) B_k^{(s)}(i)^\T 
    \nonumber\\
    &\quad + D_k(i) D_k(i)^\T\Big)-\mu_{k+1}^u(j)\mu_{k+1}^u(j)^\T
    \nonumber\\
    &\eqqcolon G_{k,j}\left(\mu_k^u(\indexset), \mu_{k+1}^u(j),\Sigma_k^u(\indexset),v_k^u(\indexset), \Sigma_k^{uu}(\indexset),\Sigma_k^{ux}(\indexset) \right).
    \nonumber
\end{align}
Thus,
\begin{align}
    &\mu^u_{k+1}(j)=F_{k,j}\left(\mu_k^u(\indexset),v_k^u(\indexset)\right),\label{eq:admissible_proof_mean_propagation}
    \\
    &\Sigma_{k+1}^u(j)=
    G_{k,j}\big(\mu_k^u(\indexset), \mu_{k+1}^u(j),\Sigma_k^u(\indexset),v_k^u(\indexset), \Sigma_k^{uu}(\indexset),\Sigma_k^{ux}(\indexset) \big).
    \label{eq:admissible_proof_cov_propagation}
\end{align}
The cost function is written as
\begin{align}
    &\sum_{k=0}^{T-1}\sum_{j\in\indexset}\rho_k(j)\mathbb{E}\Big[\big( x_k^{u\T} Q_k(j) x^u_k+ u_k^\T R_k(j) u_k\big)\mid q_{k}=j\Big]
    \nonumber\\
    &=\sum_{k=0}^{T-1}\sum_{j\in\indexset} \rho_k(j)\Big(\operatorname{tr}\big(Q_k(j)\mathbb{E}[ x^u_k x_k^{u\T} \mid q_{k}=j]\big)
    +\operatorname{tr}\big(R_k(j) \mathbb{E}[ u_k u_k^\T \mid q_{k}=j]\big)\Big)
    \nonumber\\
    &=\sum_{k=0}^{T-1}\sum_{j\in\indexset} \rho_k(j)\Big(\operatorname{tr}\big(Q_k(j)( \Sigma^u_k(j)+ \mu^u_k(j) \mu^u_k(j)^\T )\big)
    +\operatorname{tr}\big(R_k(j)( \Sigma^{uu}_k(j)+ v^u_k(j) v^u_k(j)^\T )\big)\Big)
    \nonumber\\
    &\eqqcolon H\left(\mu^u(\indexset),\Sigma^u(\indexset), v^u(\indexset),\Sigma^{uu}(\indexset)\right).
    \nonumber
\end{align}
Consequently, we have
\begin{align}
J^u=H\left(\mu^u(\indexset),\Sigma^u(\indexset), v^u(\indexset),\Sigma^{uu}(\indexset)\right).
    \label{eq:hat_J}
\end{align}
Now, let $\hat u$ be a feasible control for Problem~\ref{problem:original}. 
For each $k=0,\ldots,T$ and $j\in\indexset$, let $\bar x_k\Let x_k^{\hat u}- \mu^{\hat u}_k(j)$ and $\bar u_k=\hat u_k- v^{\hat u}_k(j)$. 
Then,
\begin{align}
    &\mathbb{E}\left[\begin{pmatrix}
        \bar x_k\\
        \bar u_k
    \end{pmatrix}\begin{pmatrix}
        \bar x_k\\
        \bar u_k
    \end{pmatrix}^\T \;\bigg|\; q_k=j \right]=\begin{pmatrix}
    \Sigma^{\hat u}_k(j) & \Sigma_k^{\hat ux}(j)^\T\\
    \Sigma_k^{\hat ux}(j) & \Sigma_k^{\hat u\hat u}(j)
\end{pmatrix} \succeq 0.\nonumber
\end{align}
Applying Lemma~\ref{lemma:for_lossless_proof} under the condition $q_k=j$, there exist $K_k(j)\in\mathbb{R}^{n_u\times n_x}$ and $ V_k(j)\succeq 0$ such that 
\begin{align}
     \Sigma_k^{\hat ux}(j)&= K_k(j)\hat \Sigma^{\hat u}_k(j),
    \label{S_ux}
    \\
     \Sigma_k^{\hat u\hat u}(j)&= K_k(j) \Sigma^{\hat u}_k(j) K_k(j)^\T + V_k(j).
    \label{eq:S_uu}
\end{align}
For $k=0,\ldots,T-1$, let $\nu_k\in\R[n_u]$ be any $\controlfiltration_k$-measurable random vector such that for every $j\in\indexset$, $\nu_k$ is independent of $x_k$, conditional on $q_k=j$, such that $\mathbb{E}[\nu_k\mid q_k=j]=0$ and $\mathbb{E}[\nu_k\nu_k^\T\mid q_k=j]=V_k(j)$.
For $k=0,\ldots,T-1$, let
\begin{align}
    u_k= K_k(q_k)(x_k^u-\mu_k^u(q_k))+v_k^{\hat u}(q_k)+\nu_k.
    \label{eq:admissible_proof_u_form}
\end{align}
We will show that $u=\{u_k\}_{k=0}^{T-1}$ is a feasible control such that $J^u=J^{\hat u}$.

To this end, notice that, for $k=0,\ldots,T-1$, $u_k$ is a deterministic function of $x_k^u$, $q_k$ and $\nu_k$, which are all $\controlfiltration_k$-measurable random vectors, hence $u_k$ is also $\controlfiltration_k$-measurable.
Next, for $k=0,\ldots,T-1$ and $j\in\indexset$, from~\ref{eq:admissible_proof_u_form} we see that $\mathbb{E}[u_k\mid q_k=j]=v_k^{\hat u}(j)$, and thus
\begin{align}
    v_k^{u}(j)=v_k^{\hat u}(j).
    \label{eq:admissible_proof_v_equal}
\end{align}
Consequently, from~\ref{eq:admissible_proof_u_form} and~\ref{eq:admissible_proof_v_equal}, we may write 
% \sid{There is \(x_k\) in the calculation below. check carefully.}\fangji{I added ``from~\ref{eq:admissible_proof_u_form} and~\ref{eq:admissible_proof_v_equal}''.} \sid{sure. The \(x_k\) in line 3 should be \(x_k^u\) right?}\fangji{Yes. Changed.} \sid{ok}
\begin{align}
    &\mathbb{E}\left[(u_k-v_k^u(j))(u_k-v_k^u(j))^\T\mid q_k=j\right]
    =\mathbb{E}\left[(u_k-v_k^{\hat u}(j))(u_k-v_k^{\hat u}(j))^\T\mid q_k=j\right]
    \nonumber\\
    &=\mathbb{E}\big[\big(K_k(j)(x_k^u-\mu_k^u(j))+\nu_k\big)\big(K_k(j)(x_k^u-\mu_k^u(j))
    +\nu_k\big)^\T\mid q_k=j\big]\nn \\& =K_k(j)\Sigma_k^u(j)K_k(j)^\T+V_k(j),
    \nonumber
\end{align}
and
\begin{align}
    &\mathbb{E}\left[(u_k-v_k^u(j))(x_k^u-\mu_k^u(j))^\T\mid q_k=j\right]
    =\mathbb{E}\left[(u_k-v_k^{\hat u}(j))(x_k^u-\mu_k^u(j))^\T\mid q_k=j\right]
    \nonumber\\
    &=\mathbb{E}\left[\left(K_k(j)(x_k^u-\mu_k^u(j))+\nu_k\right)(x_k^u-\mu_k^u(j))^\T\right]
    =K_k(j)\Sigma_k^{u}(j).
    \nonumber
\end{align}
It follows that
\begin{align}
    &\Sigma_k^{uu}(j)=K_k(j)\Sigma_k^u(j)K_k(j)^\T+V_k(j),
    \label{eq:admissible_proof_S_uu}
    \\
    &\Sigma_k^{ux}(j)=K_k(j)\Sigma_k^{u}(j).
\end{align}

By construction, the state process induced by $u$ is initialized with the same initial random state as the state process induced by $\hat u$. 
Hence, \(x_0^{u}=x_0^{\hat u}\) and, therefore, the initial condition \eqref{eq:initial_condition} holds for $x_0^{u}$ as well.
% Assume that $x_0^u$ satisfies the initial condition~\ref{eq:initial_condition}.
From~\ref{eq:admissible_proof_mean_propagation} and~\ref{eq:admissible_proof_v_equal}, we know that for each $k=0,\ldots,T$ and $j\in\indexset$,
\begin{align}
    \mu_k^u(j)=\mu_k^{\hat u}(j).
    \label{eq:admissible_proof_mu_equal}
\end{align}
Furthermore, from~\ref{eq:admissible_proof_cov_propagation},~\ref{S_ux}--\ref{eq:S_uu} and~\ref{eq:admissible_proof_v_equal}--\ref{eq:admissible_proof_mu_equal}, it can be easily proved by induction that, for each $k=0,\ldots,T-1$ and $j\in\indexset$, we have
\begin{align}
\hspace*{-2mm}
    \Sigma_k^{u}(j)=\Sigma_k^{\hat u}(j),~
    \Sigma_k^{uu}(j)=\Sigma_k^{\hat u\hat u}(j),~
    \Sigma_k^{ux}(j)=\Sigma_k^{\hat ux}(j),
    \label{eq:admissible_form_proof_same_trajectories}
\end{align}
and $\Sigma_T^{u}(j)=\Sigma_T^{\hat u}(j)$.
Therefore, $x_T^u$ also satisfies the terminal condition~\ref{eq:terminal_condition}, implying that $u$ is feasible. Furthermore, from~\ref{eq:hat_J} along with~\ref{eq:admissible_proof_mu_equal}--\ref{eq:admissible_form_proof_same_trajectories}, we obtain $J^u=J^{\hat u}$, and the proof is complete. \qed

\subsection{Proof of Theorem~\ref{thm:optiaml_control_form}}\label{appendix:proof_of_optimal_control_form}
Let $u\in\mathcal{U}$ be an admissible control.
For $k=1,\ldots,T$ and $j\in\indexset$, define $m_k^{u}(j)\Let \mathbb{E}[x_k^u\mid q_{k-1}=j]$ and $S_k^{u}(j)\Let \mathbb{E}[(x_k^u-m_k^u(j))(x_k^u-m_k^u(j))^\T\mid q_{k-1}=j]$.
Then, for $k=0,\ldots,T-1$ and $j\in\indexset$,
\begin{align}
    \mu_{k+1}^{u}(j)&=\sum_{i\in\indexset}\mathbb{E}[x^{u}_{k+1}\indicator_{\{q_k=i\}}\mid q_{k+1}=j]
    =\sum_{i\in\indexset}s_k^{ij}\mathbb{E}[x^{u}_{k+1}\mid q_k=i,\,q_{k+1}=j]
    \nonumber\\
    &=\sum_{i\in\indexset}s_k^{ij}m_{k+1}^{u}(i),
    % =\sum_{i\in\indexset}s_k^{ij}m_{k+1}^{\hat u}(i),
    \label{eq:optimal_form_proof_mean_k+1_ustar}
\end{align}
and
\begin{align}
    &\Sigma_{k+1}^{u}(j)=\sum_{i\in\indexset}\mathbb{E}[x_{k+1}^{u}x_{k+1}^{u\T}\indicator_{\{q_k=i\}}\mid q_{k+1}=j]
     -\mu_{k+1}^{u}(j) \mu_{k+1}^{u}(j)^\T
    \nonumber\\
    &=\sum_{i\in\indexset}s_k^{ij}\mathbb{E}[x_{k+1}^{u}x_{k+1}^{u\T}\mid q_k=i,\, q_{k+1}=j]
    -\mu_{k+1}^{u}(j) \mu_{k+1}^{u}(j)^\T
    \nonumber\\
    &=\sum_{i\in\indexset}s_k^{ij}\mathbb{E}[x_{k+1}^{u}x_{k+1}^{u\T}\mid q_k=i]-\mu_{k+1}^{u}(j)\mu_{k+1}^{u}(j)^\T
    \nonumber\\
    &=\sum_{i\in\indexset}s_k^{ij}\mathbb{E}[(x_{k+1}^{u}-m_{k+1}^{u}(i))(x_{k+1}^{u}-m_{k+1}^{u}(i))^\T\mid q_k=i]
    \nonumber\\
    &\quad +\sum_{i\in\indexset}s_k^{ij}m_{k+1}^{u}(i)m_{k+1}^{u}(i)^\T-\mu_{k+1}^{u}(j)\mu_{k+1}^{u}(j)^\T
    \nonumber\\
    &=\sum_{i\in\indexset}s_k^{ij}S_{k+1}^{u}(i)+\sum_{i\in\indexset}s_k^{ij}m_{k+1}^{u}(i)m_{k+1}^{u}(i)^\T -\mu_{k+1}^{u}(j) \mu_{k+1}^{u}(j)^\T,
    \label{eq:optimal_form_proof_cov_k+1_ustar}
\end{align}

From Assumption~\ref{assumption:feasible_and_sigma_positive}, there exists an optimal control $\hat u$ for Problem~\ref{problem:original}.

Let now $x_0^{u^\star}$ satisfy the initial condition~\ref{eq:initial_condition}.
We construct $u^\star=\{u^\star_k\}_{k=0}^{T-1}$ by induction on $k$ such that for each $k=0,\ldots,T$, the following holds
\begin{align}
    \mu_k^{u^\star}(j)=\mu^{\hat u}_k(j),\quad \Sigma_k^{u^\star}(j)=\Sigma_k^{\hat u}(j),\quad j\in\indexset.
    \label{eq:optimal_form_proof_same_trajectories}
\end{align}
For $k=0$,~\ref{eq:optimal_form_proof_same_trajectories} follows directly from~\ref{eq:initial_condition}.
For time $k\in\{0,\ldots,T-1\}$, assume that $u_0^\star,\ldots,u_{k-1}^\star$ have been constructed such that~\ref{eq:optimal_form_proof_same_trajectories} holds.
% \red{
Then, for each $j\in\indexset$, let $\Omega'=\{q_k=j\}$, $\statefiltration\Let \aset[]{A\cap\Omega' \suchthat A\in\statefiltration}$, $\statefiltration_0'\Let \aset[]{A\cap\Omega' \suchthat A\in\statefiltration_k}$, $\statefiltration_1'\Let \aset[]{A\cap\Omega' \suchthat A\in\statefiltration_{k+1}}$, and $\mathbb{P}(\cdot)\Let\mathbb{P}(\,\cdot\,|q_k=j)$.
Consider the single-mode one-step covariance steering subproblem at time $k$, given by
\begin{align}
    \min_{\gamma\in \Gamma}\quad &J^\gamma_{k,j}\Let \mathbb{E}[y_0^{\T} Q_k(j)y_{0}+\gamma^\T R_k(j)\gamma],
    \label{eq:one_step_problem_begin}
    \\
    \text{s.t.} \quad&y_1=A_k(j)y_{0}+B_k(j)\gamma+D_k(j)w_k|_{\Omega'},
    \label{eq:one_step_transition}
    \\
    &\mu_\mathrm{in}'=\mu_k^{\hat u}(j),\quad \Sigma_\mathrm{in}'=\Sigma_k^{\hat u}(j),
    \label{eq:one_step_subproblem_initial_condition}
    \\
    &\mu_\mathrm{out}'=m_{k+1}^{\hat u}(j), \quad \Sigma_\mathrm{out}'=S_{k+1}^{\hat u}(j),
    \label{eq:one_step_problem_end}
\end{align}
where, to avoid ambiguity in the notation, we use $y$ to denote the state and $\gamma$ to denote the control for this subproblem.
We assume that $\{y_\tau\}_{\tau=0}^1$ is $\{\statefiltration'_\tau\}_{\tau=0}^1$-adapted, and $\Gamma$ is the set of all $\R[n_u]$-valued, $\statefiltration_0'$-measurable, square-integrable random vectors.
One can also verify that $w_k|_{\Omega'}$ is a zero-mean, identity-covariance random vector that is independent of $\statefiltration_0'$.
Therefore, the subproblem~\eqref{eq:one_step_problem_begin}--\eqref{eq:one_step_problem_end} becomes a standard covariance steering problem on the filtered probability space $(\Omega', \statefiltration',\{\statefiltration_\tau'\}_{\tau=0}^1,\mathbb{P}')$.
From~\cite[Theorem 1]{liu2024reachability}, the feasibility of the subproblem depends only on the boundary conditions~\ref{eq:one_step_subproblem_initial_condition} and~\ref{eq:one_step_problem_end} instead of the exact initial distribution.
For the specific \(\mathcal{F}_0'\)-measurable initial state \(y_0 = x_k^{\hat u}\big|_{\Omega'}\), the \(\statefiltration_0'\)-measurable control \(\gamma = \hat u_k\big|_{\Omega'}\) generates, through the one-step dynamics \eqref{eq:one_step_transition}, the terminal state \(y_1 = x_{k+1}^{\hat u}\big|_{\Omega'}\), and satisfies the boundary conditions \eqref{eq:one_step_subproblem_initial_condition}--\eqref{eq:one_step_problem_end}.

% \blue{Fangji, please check this: For the specific \(\mathcal{F}_0'\)-measurable initial state \(y_0 = x_k^{\hat u}\big|_{\Omega'}\), the \(\statefiltration_0'\)-measurable control \(\gamma = \hat u_k\big|_{\Omega'}\) generates, through the one-step dynamics \eqref{eq:one_step_transition}, the terminal state \(y_1 = x_{k+1}^{\hat u}\big|_{\Omega'}\), and satisfies the boundary conditions \eqref{eq:one_step_subproblem_initial_condition}--\eqref{eq:one_step_problem_end}.}

Therefore, the subproblem~\eqref{eq:one_step_problem_begin}--\eqref{eq:one_step_problem_end} is feasible.
Applying~\cite[Theorem 3]{liu2024optimal} to subproblem~\eqref{eq:one_step_problem_begin}--\eqref{eq:one_step_problem_end}, there exist $K_k(j)\in\R[n_u\times n_x]$ and $v_k(j)\in\R[n_u]$ such that 
\begin{align}
    \gamma^\star=K_{k}(j)\big(y_0-\mu'_\mathrm{in}\big)+v_k(j),
    \nonumber
\end{align}
% \blue{the \(y_k\) is for general kth-problem or (63)-(66)? becuuse in (63)-(66) there is not \(y_k\), it has states \(y_0,y_1\). Thus \(\gamma^{\star}\) is a little confusing}\fangji{Oh, it should be $y_0$. I forgot to change this.}\sid{okay got it}
is an optimal control for this subproblem.
Let now
\begin{align}
    u_k^\star=K_k(q_k)\big(x_k^{u^\star}-\mu_k^{u^\star}(q_k)\big)+v_k^{u^\star}(q_k).
    \nonumber
\end{align}
% \blue{note that \(v_k^{u^\star}(q_k)\) was not define in the appendix, or it was? I can't find it.}\fangji{It is defined before proposition 1} \sid{ah okay}
Then, under the control $u^\star$, $x_{k+1}^{u^\star}|_{\Omega'}$  satisfies the terminal condition~\ref{eq:one_step_problem_end}, that is,
% \blue{\(x_{k+1}^{u^\star}|_{\Omega'}\) is restriction to \(\Omega'\)?}\fangji{Yes.}\sid{got it}
\begin{align}
    m_{k+1}^{u^\star}(j)=m_{k+1}^{\hat u}(j),\quad S_{k+1}^{u^\star}(j)=S_{k+1}^{\hat u}(j).
    \nonumber
\end{align}
Hence, from~\ref{eq:optimal_form_proof_mean_k+1_ustar} and~\ref{eq:optimal_form_proof_cov_k+1_ustar}, we know that~\ref{eq:optimal_form_proof_same_trajectories} holds for $k+1$.

From~\ref{eq:optimal_form_proof_same_trajectories}, we know that $u^\star$ is feasible to Problem~\ref{problem:original}.
Since for each $k=0,\ldots,T-1$, $u_k^\star$ is constructed optimally from the subproblems, it is straightforward to see that $J^{u^\star}\le J^{\hat u}$.
Hence, $u^\star$ is an optimal control for Problem~\ref{problem:original}.

\subsection{Proof of Proposition~\ref{proposition:ball_constraints}}
\label{appendix:proof_ball_constraints}

For the constraints on $x_\constridx$, let $a_\constridx^\mathrm{ref}(j)\in\mathbb{R}^{n_x}$ be such that $r_{\constridx,x}> \|a_\constridx^\mathrm{ref}(j)-a_\constridx\|$, for $j\in\indexset$. 
Then,
\begin{align}
    &\mathbb{P}(\|x_\constridx-a_\constridx\|>r_{\constridx,x})
    =\sum_{j\in\indexset}\rho_\constridx(j)\mathbb{P}(\|x_\constridx-a_\constridx\|>r_{\constridx,x}\mid q_\constridx=j)
    \nonumber\\
    &\le \sum_{j\in\indexset}\rho_\constridx(j)\mathbb{P}(\|x_\constridx-a_\constridx^\mathrm{ref}(j)\|>r_{\constridx,x}-\|a_\constridx^\mathrm{ref}(j)-a_\constridx\| \mid q_\constridx=j)
    \nonumber\\
    &= \sum_{j\in\indexset}\rho_\constridx(j)\mathbb{P}(\|x_\constridx-a_\constridx^\mathrm{ref}(j)\|^2>(r_{\constridx,x}-\|a_\constridx^\mathrm{ref}(j)-a_\constridx\|)^2 \mid q_\constridx=j).
    \label{eq:ball_constraints_proof_temp1}
\end{align}
By Markov's Inequality~\cite{grimmett2020probability},
\begin{align}
    &\mathbb{P}\left(\|x_\constridx-a_\constridx^\mathrm{ref}(j)\|^2>(r_{\constridx,x}-\|a_\constridx^\mathrm{ref}(j)-a_\constridx\|)^2\mid q_\constridx=j\right)
    \nonumber\\
    &\le \frac{\mathbb{E}[\|x_\constridx-a_\constridx^\mathrm{ref}(j)\|^2\mid q_\constridx=j]}{(r_{\constridx,x}-\|a_\constridx^\mathrm{ref}(j)-a_\constridx\|)^2}=\frac{\operatorname{tr}\big(I_\constridx(j)\big)-2a_\constridx^\mathrm{ref}(j)^\T \mu_\constridx(j)+a_\constridx^\mathrm{ref}(j)^\T a_\constridx^\mathrm{ref}(j))}{(r_{\constridx,x}-\|a_\constridx^\mathrm{ref}(j)-a_\constridx\|)^2}.
    \label{eq:ball_proof_1}
\end{align}
From~\ref{eq:ball_constraints_proof_temp1} and~\ref{eq:ball_proof_1}, it is straightforward to show that~\ref{eq:ball_constraints_on_x} is implied by~\ref{eq:convexified_ball_constraints_for_x}.

It can be similarly shown that for $b_k^\mathrm{ref}(j)\in\mathbb{R}^{n_u}$ such that $r_{k,u} > \|b_k^\mathrm{ref}(j)-b_k\|$,~\ref{eq:ball_constraints_on_u} is implied by~\ref{eq:convexified_ball_constraints_for_u}.

\subsection{Proof of Proposition~\ref{proposition:polytope_constraints}}
\label{appendix:proof_polytope_constraints}

To show~\ref{eq:polytope_constraints_on_x}, it suffices to show that
\begin{equation}
    \mathbb{P}(\alpha_{\constridx,x}^{\T}x_\constridx>\beta_{\constridx,x}\mid q_\constridx=j)\le \varepsilon_{x},\quad j\in\indexset.
    \label{eq:polytope_proof_temp1}
\end{equation}
For $j\in\indexset$, 
$\mathbb{E}[\alpha_{\constridx,x}^\T x_\constridx]=\alpha_{\constridx,x}^\T\mu_\constridx(j)$,
and $\operatorname{Var}(\alpha_{\constridx,x}^\T x_\constridx\mid q_\constridx=j)=\alpha_{\constridx,x}^\T(\mathbb{E}[x_\constridx x_\constridx^\T\mid q_\constridx=j]-\mu_\constridx(j)\mu_\constridx^\T(j))\alpha_{\constridx,x}=\alpha_{\constridx,x}^\T I_\constridx(j)\alpha_{\constridx,x}-(\alpha_{\constridx,x}^\T\mu_\constridx(j))^2$.
By Cantelli's Inequality~\cite{grimmett2020probability}, $\beta_{\constridx,x} \ge \alpha_{\constridx,x}^{\T}\mu_\constridx(j)$, implies that
\begin{align}
    \mathbb{P}(\alpha_{\constridx,x}^{\T}x_\constridx>\beta_{\constridx,x}\mid q_\constridx=j)&\le \frac{\mathrm{Var}(\alpha_{\constridx,x}^{\T}x_\constridx\mid q_\constridx=j)}{\mathrm{Var}(\alpha_{\constridx,x}^{\T}x_\constridx\mid q_\constridx=j)+(\beta_{\constridx,x}-\alpha_{\constridx,x}^\T\mu_\constridx(j))^2}
    \nonumber\\
    &=\frac{\alpha_{\constridx,x}^\T I_\constridx(j)\alpha_{\constridx,x}-(\alpha_{\constridx,x}^\T\mu_\constridx(j))^2}{\alpha_{\constridx,x}^\T I_\constridx(j)\alpha_{\constridx,x}-2\beta_{\constridx,x}\alpha_{\constridx,x}^\T\mu_\constridx(j)+\beta_{\constridx,x}^2}.
    \label{eq:polytope_proof_temp3}
\end{align}
Note that for $\mu_\constridx^\mathrm{ref}(j) \in\mathbb{R}^{n_x}$,
% \begin{align}
%     (\mu_\constridx(j)-\mu_\constridx^\mathrm{ref}(j) )^\T \alpha_{\constridx,x} \alpha_{\constridx,x}^\T(\mu_\constridx(j)-\mu_\constridx^\mathrm{ref}(j) )\ge 0.    
% \end{align}
% Hence
\begin{align}
    (\alpha_{\constridx,x}^\T\mu_\constridx(j))^2\ge 2\mu_\constridx^\mathrm{ref}(j)^\T \alpha_{\constridx,x} \alpha_{\constridx,x}^\T\mu_\constridx(j)-(\alpha_{\constridx,x}^\T\mu_\constridx^\mathrm{ref}(j))^2.
    \label{eq:polytope_proof_temp2}
\end{align}
Plugging~\ref{eq:polytope_proof_temp2} into~\ref{eq:polytope_proof_temp3}, yields that $\mathbb{P}(\alpha_{\constridx,x}^{\T}x_\constridx>\beta_{\constridx,x}\mid q_\constridx=j)$ is upper bounded by
\begin{align}
    \frac{\alpha_{\constridx,x}^\T I_\constridx(j)\alpha_{\constridx,x}-2\mu_\constridx^{\mathrm{ref}}(j)^\T \alpha_{\constridx,x} \alpha_{\constridx,x}^\T\mu_\constridx(j)+(\alpha_{\constridx,x}^\T\mu_\constridx^\mathrm{ref}(j))^2}{\alpha_{\constridx,x}^\T I_\constridx(j)\alpha_{\constridx,x}-2\beta_{\constridx,x}\alpha_{\constridx,x}^\T\mu_\constridx(j)+\beta_{\constridx,x}^2}.
    \label{eq:polytope_proof_temp4}
\end{align}
Therefore, to show~\ref{eq:polytope_proof_temp1}, it suffices to show that the quantity in~\ref{eq:polytope_proof_temp4} is less than or equal to $\varepsilon_x$, which this is equivalent to~\ref{eq:convexified_polytope_constraints_on_x_1}.

The proof for~\ref{eq:convexified_polytope_constraints_on_u_1}-\ref{eq:convexified_polytope_constraints_on_u_2} implying~\ref{eq:polytope_constraints_on_u} is similar and is thus omitted.\qed

\subsection{Expressions for $E_{\constridx,i}(\indexset)$, $F_{\constridx,i}(\indexset)$, $G_{\constridx,i}(\indexset)$, $c_{\constridx,i}$ and $d_{\constridx,i}$.}\label{appen:expressions}

From~\ref{eq:I_with_I_tilde}, we know that
\begin{align}
    &I_\constridx(j)=S_1\tilde I_\constridx(j)S_1^\T,\label{eq:proof_I_Itilde}
    \\
    &\mu_\constridx(j)=S_1\tilde I_\constridx(j)S_2,\label{eq:proof_mu_Itilde}
    \\
    &v_\constridx(j)=\tilde U_\constridx(j)S_2,\label{eq:proof_v_Utilde}
    \\
    &\operatorname{tr}\big(I_\constridx(j)\big)=\operatorname{tr}\big(\tilde I_\constridx(j)\big)-1,\label{eq:proof_I_Itilde_trace}
\end{align}
where
$S_1=\begin{pmatrix}
    \identityMatrix_{n_x} & 0_{n_x\times 1}
\end{pmatrix}$, and $S_2=\begin{pmatrix}
    0_{n_x\times 1}\\
    1
\end{pmatrix}$.

\subsubsection{Ball constraints on x}

Using~\ref{eq:proof_I_Itilde_trace} and~\ref{eq:proof_mu_Itilde}, the left-hand side of~\ref{eq:convexified_ball_constraints_for_x} equals to
\begin{align}
    &\sum_{j\in\indexset}\frac{\rho_\constridx(j)}{(r_{\constridx,x}-\|a_\constridx^\mathrm{ref}(j)-a_\constridx\|)^2}\Big(\operatorname{tr}\big(\tilde I_\constridx(j)\big)
    -2a_\constridx^\mathrm{ref}(j)^\T S_1 \tilde I_\constridx(j)S_2 +a_\constridx^\mathrm{ref}(j)^\T a_\constridx^\mathrm{ref}(j))-1\Big).
    \nonumber
\end{align}
Using the property of trace, this can be written as
\begin{align}
    &\sum_{j\in\indexset}\frac{\rho_\constridx(j)}{(r_{\constridx,x}-\|a_\constridx^\mathrm{ref}(j)-a_\constridx\|)^2}\Big(\operatorname{tr}\big(\tilde I_\constridx(j)\big) 
    -2\operatorname{tr}\big(S_2a_\constridx^\mathrm{ref}(j)^\T  S_1\tilde I_\constridx(j)\big) +a_\constridx^\mathrm{ref}(j)^\T a_\constridx^\mathrm{ref}(j))-1\Big)
    \nonumber\\
    &=\sum_{j\in\indexset}\frac{\rho_\constridx(j)}{(r_{\constridx,x}-\|a_\constridx^\mathrm{ref}(j)-a_\constridx\|)^2}\Big(\operatorname{tr}\Big(\big(\identityMatrix_{n_x+1}-2S_2a_\constridx^\mathrm{ref}(j)^\T S_1\big) \tilde I_\constridx(j)\Big)+a_\constridx^\mathrm{ref}(j)^\T a_\constridx^\mathrm{ref}(j))-1\Big).
    \nonumber
\end{align}
This corresponds to $\ell_s=1$ in~\ref{eq:final_constraints_on_I},
and for $j\in\indexset$
\begin{align}
    &E_{\constridx,1}(j)=\frac{\rho_\constridx(j)}{(r_{\constridx,x}-\|a_\constridx^\mathrm{ref}(j)-a_\constridx\|)^2}\big(\identityMatrix_{n_x+1}-2S_2a_\constridx^\mathrm{ref}(j)^\T S_1\big),
    \nonumber\\    &c_{\constridx,1}=\varepsilon_x-\sum_{j\in\indexset}\frac{\rho_\constridx(j)(a_\constridx^\mathrm{ref}(j)^\T a_\constridx^\mathrm{ref}(j)-1)}{(r_{\constridx,x}-\|a_\constridx^\mathrm{ref}(j)-a_\constridx\|)^2}.
    \nonumber
\end{align}

\subsubsection{Ball constraints on u}

From~\ref{eq:proof_v_Utilde}, the left-hand side of~\ref{eq:convexified_ball_constraints_for_u} equals 
\begin{align}
    &\sum_{j\in\indexset}\frac{\rho_\constridx(j)}{(r_{\constridx,u}-\|b_\constridx^\mathrm{ref}(j)-b_\constridx\|)^2}\Big(\operatorname{tr}(Y_{\constridx}(j))
      -2b_\constridx^\mathrm{ref}(j)^\T \tilde U_\constridx(j)S_2+b_\constridx^\mathrm{ref}(j)^\T b_\constridx^\mathrm{ref}(j)\Big),
     \nonumber
\end{align}
which then equals
\begin{align}
    &\sum_{j\in\indexset}\frac{\rho_\constridx(j)}{(r_{\constridx,u}-\|b_\constridx^\mathrm{ref}(j)-b_\constridx\|)^2}\Big(\operatorname{tr}(Y_{\constridx}(j))
      -2\operatorname{tr}\big(S_2b_\constridx^\mathrm{ref}(j)^\T  \tilde U_\constridx(j)\big)+b_\constridx^\mathrm{ref}(j)^\T b_\constridx^\mathrm{ref}(j)\Big).
     \nonumber
\end{align}
This corresponds to $t_s=1$ in~\ref{eq:final_constraints_on_U_Y},
and for $j\in\indexset$,
\begin{align}
    &F_{\constridx,1}(j)=\frac{\rho_\constridx(j)}{(r_{\constridx,u}-\|b_\constridx^\mathrm{ref}(j)-b_\constridx\|)^2}\, \identityMatrix_{n_u},
    \nonumber\\
    &G_{\constridx,1}(j)=-\frac{2\rho_\constridx(j)}{(r_{\constridx,u}-\|b_\constridx^\mathrm{ref}(j)-b_\constridx\|)^2}S_2b_\constridx^\mathrm{ref}(j)^\T,
    \nonumber\\
    &d_{\constridx,1}=\varepsilon_u-\sum_{j\in\indexset}\frac{\rho_\constridx(j)b_\constridx^\mathrm{ref}(j)^\T b_\constridx^\mathrm{ref}(j)}{(r_{\constridx,u}-\|b_\constridx^\mathrm{ref}(j)-b_\constridx\|)^2}.
    \nonumber
\end{align}

\subsubsection{Halfspace constraints on x}
From~\ref{eq:proof_I_Itilde} and~\ref{eq:proof_mu_Itilde}, the left-hand side of~\ref{eq:convexified_polytope_constraints_on_x_1} equals 
\begin{align}
    &(1-\varepsilon_x)\alpha_{\constridx,x}^\T S_1\tilde I_\constridx(j) S_1^\T\alpha_{\constridx,x}-2(\mu_\constridx^\mathrm{ref}(j)^\T \alpha_{\constridx,x}-\varepsilon_x\beta_{\constridx,x})\alpha_{\constridx,x}^\T 
     \times S_1\tilde I_\constridx(j)S_2.
    \nonumber
\end{align}
Using the properties of  the trace, this equals
\begin{align}
    &(1-\varepsilon_x)\operatorname{tr}\big(S_1^\T\alpha_{\constridx,x}\alpha_{\constridx,x}^\T S_1\tilde I_\constridx(j)\big)-2(\mu_\constridx^\mathrm{ref}(j)^\T \alpha_{\constridx,x}-\varepsilon_x\beta_{\constridx,x})
    \times\operatorname{tr}\big(S_2\alpha_{\constridx,x}^\T S_1\tilde I_\constridx(j)\big)
    \nonumber\\
    &=\operatorname{tr}\Big(\big((1-\varepsilon_x)S_1^\T\alpha_{\constridx,x}\alpha_{\constridx,x}^\T S_1+2(\varepsilon_x\beta_{\constridx,x}-\mu_\constridx^\mathrm{ref}(j)^\T \alpha_{\constridx,x})
    \times S_2\alpha_{\constridx,x}^\T S_1\big)\tilde I_\constridx(j)\Big).
    \nonumber
\end{align}
From~\ref{eq:proof_mu_Itilde}, the left-hand side of~\ref{eq:convexified_polytope_constraints_on_x_2} equals 
\begin{align}
    \alpha_{\constridx,x}^\T S_1\tilde I_\constridx(j)S_2=\operatorname{tr}\big(S_2\alpha_{\constridx,x}^\T S_1\tilde I_\constridx(j)\big).
    \nonumber
\end{align}
This corresponds to  $\ell_\constridx=2N_\indexset$ in~\ref{eq:final_constraints_on_I}, and for $i=1,\ldots,N_\indexset$,
\begin{align}
    &E_{\constridx,2i-1}(j)=
    \begin{cases}
        (1-\varepsilon_x)S_1^\T\alpha_{\constridx,x}\alpha_{\constridx,x}^\T S_1+2(\varepsilon_x\beta_{\constridx,x}
        \\
        \quad-\mu_\constridx^\mathrm{ref}(j)^\T  \alpha_{\constridx,x})S_2\alpha_{\constridx,x}^\T S_1 \quad \text{for } j=i,
        \\[6pt]
        0,\quad \text{otherwise},
    \end{cases}
    \nonumber\\
    &E_{\constridx,2i}(j)\begin{cases}
        S_2\alpha_{\constridx,x}^\T S_1, & j=i
        \\
        0,& \text{otherwise},
    \end{cases}
    \nonumber\\
    &c_{\constridx,2i-1}=\varepsilon_{x}\beta_{\constridx,x}^2-(\alpha_{\constridx,x}^\T\mu_\constridx^\mathrm{ref}(i))^2,\quad c_{\constridx,2i}=\beta_{\constridx,x}.
    \nonumber
\end{align}

\subsubsection{Halfspace constraints on u}
From~\ref{eq:proof_v_Utilde}, the left-hand side of~\ref{eq:convexified_polytope_constraints_on_u_1} equals to
\begin{align}
    &(1-\varepsilon_u)\alpha_{\constridx,u}^\T Y_{\constridx}(j)\alpha_{\constridx,u} 
     - 2\big(v_\constridx^\mathrm{ref}(j)^\T\alpha_{\constridx,u}-\varepsilon_u\beta_{\constridx,u}\big)\alpha_{\constridx,u}^\T \tilde U_\constridx(j)S_2,
    \nonumber
\end{align}
which can be further written as
\begin{align}
    &(1-\varepsilon_u)\operatorname{tr}\big(\alpha_{\constridx,u}\alpha_{\constridx,u}^\T Y_{\constridx}(j)\big) 
    - 2\big(v_\constridx^\mathrm{ref}(j)^\T\alpha_{\constridx,u}-\varepsilon_u\beta_{\constridx,u}\big) \operatorname{tr}\big(S_2\alpha_{\constridx,u}^\T\tilde U_\constridx(j)\big).
    \nonumber
\end{align}
From~\ref{eq:proof_v_Utilde}, the left-hand side of~\ref{eq:convexified_polytope_constraints_on_u_2} equals to
\begin{align}
    \alpha_{\constridx,u}^\T\tilde U_\constridx(j)S_2=\operatorname{tr}\big(S_2\alpha_{\constridx,u}^\T\tilde U_\constridx(j)\big).
    \nonumber
\end{align}
This corresponds to  $t_\constridx=2N_\indexset$ in~\ref{eq:final_constraints_on_U_Y}, and for $i=1,\ldots,N_\indexset$ 
\begin{align}
    &F_{\constridx,2i-1}(j)=\begin{cases}
        (1-\varepsilon_u)\alpha_{\constridx,u}\alpha_{\constridx,u}^\T, & j=i,\\
        0, &\text{otherwise},
    \end{cases}
    \nonumber\\
    &G_{\constridx,2i-1}(j)=\begin{cases}
         2\big(\varepsilon_u\beta_{\constridx,u}-v_\constridx^\mathrm{ref}(j)^\T\alpha_{\constridx,u}\big)S_2\alpha_{\constridx,u}^\T, \quad  j=i,
         \\
         0, \quad \text{otherwise},
    \end{cases}
    \nonumber\\
    &d_{\constridx,2i-1}=\varepsilon_{u}\beta_{\constridx,u}^2-(\alpha_{\constridx,u}^\T v_\constridx^\mathrm{ref}(i) )^2,\quad F_{\constridx,2i}(j)=0,\quad j\in\indexset,
    \nonumber\\
    &G_{\constridx,2i}(j)=\begin{cases}
        S_2\alpha_{\constridx,u}^\T, & j=i,
        \\
        0,&\text{otherwise},
    \end{cases}
    \nonumber\\
    &d_{\constridx,2i}=\beta_{\constridx,u}.
    \nonumber
\end{align}

% \subsection{Optimal reference for ball constraints}
% \label{appendix:proof_ball_reference}